\newtheorem{thm}{Theorem}[section]
\newtheorem{lem}[thm]{Lemma}
\newtheorem{prop}[thm]{Proposition}
\newtheorem{cor}[thm]{Corollary}
\newtheorem{defin}[thm]{Definition}
\theoremstyle{remark}
\newtheorem{rem}[thm]{Remark}
\numberwithin{equation}{section}
\renewcommand{\Re}{\operatorname{Re}}
\renewcommand{\Im}{\operatorname{Im}}
\title{Self-similar blowup for mass supercritical Schr\"odinger equations}
\author{Roland Donninger}
\email{roland.donninger@univie.ac.at}
\address{Universit\"at Wien, Fakult\"at f\"ur Mathematik,
  Oskar-Morgenstern-Platz 1, 1090 Vienna, Austria}
\author{Lorenz Lichtnecker}
\email{lorenz.lichtnecker@univie.ac.at}
\address{Universit\"at Wien, Fakult\"at f\"ur Mathematik,
  Oskar-Morgenstern-Platz 1, 1090 Vienna, Austria}
\thanks{This research was funded in whole or in part by the
Austrian Science Fund (FWF) 10.55776/P34560,
10.55776/PIN2161424, and 10.55776/PAT9429324. For open access purposes, the authors have
applied a CC BY public copyright license to any author-accepted manuscript version arising from this submission.}
\begin{document}

\maketitle

\begin{abstract}
We consider the focusing nonlinear Schr\"odinger equation in three spatial dimensions with powers close to three and prove the existence of a self-similar solution. This generalizes a previous result on the cubic case and shows that self-similar blowup is stable under perturbations of the equation.
\end{abstract}

\section{Introduction}
\noindent In this paper we consider the focusing, power-nonlinear Schrödinger equation (p-NLS)
\begin{equation}\label{p-NLS}
    i\partial_t\psi(t,x)+\Delta_x\psi(t,x)+\psi(t,x)|\psi(t,x)|^{p-1}=0    
\end{equation}
for an unknown $\psi:I\times\mathbb{R}^3\rightarrow\mathbb{C}$, where $I\subset\mathbb{R}$ is an interval and $p\in \left(\frac{7}{3},5\right)$ is mass supercritical and energy subcritical. Eq.~(\ref{p-NLS}) is the prototype for a nonlinear Schrödinger equation, describing the competing effects of dispersion and nonlinear focusing.

The Cauchy problem for Eq.~(\ref{p-NLS}) with sufficiently regular prescribed initial data $\psi(0,\cdot)=\psi_0:\mathbb{R}^3\rightarrow\mathbb{C}$ is well known to be locally well-posed, i.e., at least for short times $t$, there exists a unique solution that depends continuously on the initial data, see e.g.~\cite{Sul}. However, solutions may cease to exist after a finite time, see \cite{Gla}. There have been various attempts to quantify the behavior of solutions close to the time of breakdown and numerical simulations indicate the existence of a self-similar solution $\psi_{p,*}$ of the form
\begin{equation}\label{p-selfsimEq}
    \psi_{p,*}(t,x)=(2\alpha_p)^{-\tfrac{1}{p-1}}(1-t)^{-\tfrac{1}{p-1}-\tfrac{i}{2\alpha_p}}Q_p\left(\frac{x}{\sqrt{2\alpha_p(1-t)}}\right),
\end{equation}
defined on $(-\infty,1)\times\mathbb{R}^3$, where $Q_p:\mathbb{R}^3\rightarrow\mathbb{C}$ is a radial function in $L^{p+1}(\mathbb{R}^3)\cap\dot{H}^1(\mathbb{R}^3)\cap C^\infty(\mathbb{R}^3)\setminus \{0\}$, see \cite{Sul}. The true significance of this solution lies with its conjectured universality: Numerical simulations suggest that for sufficiently large but otherwise arbitrary initial data, the corresponding solution to Eq.~(\ref{p-NLS}) develops a singularity in finite time and that close to the time of breakdown, the shape of the solution approaches $\psi_{p,*}$ (modulo symmetries of the equation), independently of the concrete form of the initial data, see \cite{McLPapa, LandPapa, YangRoud}. In view of this universality, proving the existence of the self-similar solution $\psi_{p,*}$ is of great importance to understand the dynamics of Eq.~(\ref{p-NLS}). The first rigorous proof for existence of such a self-similar solution for the cubic case $p=3$ is due to the first author and Schörkhuber \cite{DS}, by implementing a Chebychev pseudo-spectral method combined with a contraction argument. Most importantly, the computer assistance used in \cite{DS} is completely elementary, solely based on integer arithmetic, and does not obscure the inherent argument of the proof. This provides sufficient flexibility to adapt the arguments for $p$ close to $3$ on a pure pen-and-paper basis without the need for further computer assistance. A consequence of the results of the present paper is the observation that the existence of a self-similar solution to Eq.~(\ref{p-NLS}) is stable under perturbations of the exponent $p$. This shows that self-similar blowup is not restricted to the particular model with $p=3$ but generic among nonlinear Schr\"odinger equations in the mass supercritical regime. In particular, self-similar blowup is expected to be of fundamental relevance in the description of physical phenomena based on the nonlinear Schr\"odinger equation.

\begin{thm}[Main theorem, qualitative version]\label{qualThm}
There exists an open interval $\mathring{I}_*\subset\left(\tfrac{7}{3},5\right)$ such that $3\in \mathring{I}_*$ and for all $p\in I_*$, where $I_*$ denotes the closure of $\mathring{I}_*$, the following holds true. 
There exists a radial function $Q_p\in L^{p+1}(\mathbb{R}^3)\cap\dot{H}^1(\mathbb{R}^3)\cap C^\infty(\mathbb{R}^3)\setminus\{0\}$ and an $\alpha_p>0$ such that $\psi_{p,*}:(-\infty,1)\times\mathbb{R}^3\rightarrow\mathbb{C}$, defined by
$$\psi_{p,*}(t,x):=(2\alpha_p)^{-\tfrac{1}{p-1}}(1-t)^{-\tfrac{1}{p-1}-\tfrac{i}{2\alpha_p}}Q_p\left(\frac{x}{\sqrt{2\alpha_p(1-t)}}\right),$$
satisfies
$$i\partial_t\psi_{p,*}(t,x)+\Delta_x\psi_{p,*}(t,x)+\psi_{p,*}(t,x)|\psi_{p,*}(t,x)|^{p-1}=0$$
for all $(t,x)\in(-\infty,1)\times\mathbb{R}^3$. In particular, $Q_p$ satisfies the \emph{profile equation}
\begin{equation}\label{QEq}
\Delta Q_p(\xi)-Q_p(\xi)+i\alpha_p\left[\xi\cdot\nabla Q_p(\xi)+\tfrac{2}{p-1}Q_p(\xi)\right]+Q_p(\xi)|Q_p(\xi)|^{p-1}=0    
\end{equation}
for all $\xi\in\mathbb{R}^3$.
\end{thm}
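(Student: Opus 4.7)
My plan is to prove the theorem by a perturbative argument, taking the cubic solution $(Q_3,\alpha_3)$ constructed in \cite{DS} as the anchor point. First I would exploit radial symmetry: writing $Q_p(\xi)=u_p(|\xi|)$ reduces \eqref{QEq} to a second-order complex ODE on $[0,\infty)$. The phase invariance $Q\mapsto e^{i\theta}Q$ is fixed by imposing, e.g., $u_p(0)\in\mathbb{R}_{>0}$, leaving the profile $u_p$ and the scalar $\alpha_p>0$ as unknowns, subject to regularity at $\rho=0$ and to the prescribed oscillatory, polynomially decaying behavior at infinity.

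I would then recast this boundary value problem as a zero-finding equation $F(u,\alpha,p)=0$ in a Banach space suited to the analysis. Following \cite{DS}, a natural choice is a space of Chebyshev coefficients on a compactified interval, in which the boundary conditions become finitely many linear constraints and the regularity and decay of $u_p$ are encoded in summability of the coefficients. By construction $F(Q_3,\alpha_3,3)=0$, and the contraction estimates of \cite{DS} provide a quantitative bounded inverse of the Fr\'echet derivative $D_{(u,\alpha)}F(Q_3,\alpha_3,3)$ after the gauge has been fixed. An application of the implicit function theorem then yields an open interval $\mathring{I}_*\ni 3$ carrying a continuous curve $p\mapsto(Q_p,\alpha_p)$, and $I_*$ is obtained by shrinking $\mathring{I}_*$ slightly so that the quantitative estimates persist up to its endpoints. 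The claimed regularity $Q_p\in L^{p+1}(\mathbb{R}^3)\cap\dot{H}^1(\mathbb{R}^3)\cap C^\infty(\mathbb{R}^3)\setminus\{0\}$ would then follow from a standard ODE bootstrap (smoothness), asymptotic analysis of the radial equation at infinity (integrability), and continuity in $p$ applied to $Q_3\not\equiv 0$.

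I expect the main obstacle to be twofold. First, the nonlinearity $z\mapsto z|z|^{p-1}$ is not jointly smooth in $(z,p)$ for non-integer $p$, so one must carefully track the regularity of the associated Nemytskii operator and verify that the estimates available at $p=3$, where $z|z|^{p-1}=z|z|^2$ is polynomial, remain uniform under perturbation in $p$; in particular, one should confirm a pointwise lower bound on $|Q_3|$ (or treat the zeros separately) to avoid loss of differentiability at the points where $Q_3$ vanishes. Second, and more substantively, one must verify that the quantitative inverse obtained from \cite{DS} is robust enough to absorb the $p$-dependent perturbation of both the linear part, through the $\tfrac{2}{p-1}$ coefficient and the scaling of the radial variable, and of the nonlinearity. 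This amounts to re-running the contraction argument of \cite{DS} with $Q_3$ as the reference approximate solution and $p$ as a free small parameter; the attractive feature of the Chebyshev framework of \cite{DS} is that this step should remain a purely pen-and-paper perturbation, provided the coefficient bounds there are stated with sufficient uniformity.
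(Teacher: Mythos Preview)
Your proposal has a substantive gap: it rests on the claim that ``the contraction estimates of \cite{DS} provide a quantitative bounded inverse of the Fr\'echet derivative $D_{(u,\alpha)}F(Q_3,\alpha_3,3)$ after the gauge has been fixed,'' and then applies the implicit function theorem. But \cite{DS} does \emph{not} construct such an inverse, and the paper explicitly rejects the IFT route: the phase invariance forces a nontrivial (near-)kernel of the linearized operator, and even after your proposed gauge constraint $u_p(0)\in\mathbb{R}_{>0}$ one would still have to \emph{prove} that the resulting linearization in the joint variable $(u,\alpha)$ is boundedly invertible on the relevant space---something neither \cite{DS} nor the present paper establishes or uses.

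What the paper actually does is quite different in mechanism. It builds an approximate operator $\tilde{\mathcal L}_{a,p}$ via an explicit approximate fundamental system (depending on $p$ through the leading asymptotics), formally inverts it by variation of constants to get $\tilde{\mathcal L}_{a,p}^{-1}$, and---because $\tilde{\mathcal L}_{a,p}^{-1}$ does not land in the correct space---adds a one-dimensional correction $f_0\,\psi(a,p,g)$ to obtain an operator $\mathcal J(a,p,g)$ that maps $Y\to X$ but is only a genuine right inverse when the scalar $\psi(a,p,g)$ vanishes. A contraction mapping then produces a fixed point $f_{a,p}$ for \emph{every} $(a,p)\in J_*\times I_0$, and the correct $\alpha_p=a_*+a_p$ is found \emph{a posteriori} by a one-dimensional shooting (intermediate value) argument: the map $a\mapsto\psi(a,p,\mathcal G(a,p,f_{a,p}))$ changes sign on $J_*$ for $p$ near $3$, by continuity from the $p=3$ computation in \cite{DS}. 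The phase is never fixed; the near-kernel is absorbed by the false-inverse/regularization device rather than modded out.

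Your second paragraph of ``expected obstacles''---re-running the \cite{DS} contraction with $p$ as a free small parameter and carrying over the quantitative bounds by continuity---is in fact the correct spirit and is exactly how the paper proceeds. But the linear-algebra backbone is the false inverse $\mathcal J$ plus shooting in $a$, not an IFT on a gauge-fixed linearization; you should replace the IFT step by that construction.
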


In addition to the existence result in Theorem \ref{qualThm}, we obtain a precise approximation to $Q_p$ with rigorously proven error bounds.
\begin{defin}\label{PandaDef}
    For $n\in\mathbb{N}_0$ let $T_n:\mathbb{C}\rightarrow\mathbb{C}$ be the Chebychev polynomial of $n^\mathrm{th}$ degree, which is uniquely defined by $T_n(\cos(z))=\cos(nz)$ for all $z\in\mathbb{C}$. We define $P_*:(-1,1)\rightarrow\mathbb{C}$ by
    $$P_*(y):=\sum_{n=0}^{50}c_n(P_*)T_n(y)$$
    with $(c_n(P_*))_{n=0}^{50}\subset\mathbb{C}$ given in \cite{DS}, Appendix A.1,
    $$a_*:=\tfrac{772201763088846}{841768781900003},$$
    and $g_*:[0,\infty)\times\mathbb{R}\times\left(\tfrac{7}{3},5\right)\rightarrow\mathbb{C}$ given by
    $$g_*(r,a,p):=P_*\left(\frac{r-1}{r+1}\right)+\frac{2i(p-1)(a_*+a)}{(p-1)-2i(a_*+a)}P_*'(1)-P_*(1).$$
\end{defin}
\begin{thm}[Main theorem, quantitative version]\label{quantThm}
There exists an open interval $\mathring{I}_*\subset\left(\tfrac{7}{3},5\right)$ such that $3\in \mathring{I}_*$ and for all $p\in I_*$, where $I_*$ denotes the closure of $\mathring{I}_*$, the following holds true. 
There exists an $a_p\in[-10^{-10},10^{-10}]$ and a $g_p:[0,\infty)\rightarrow\mathbb{C}$ satisfying
$$2\sup_{r>0}r|g_p'(r)|+\sup_{r>0}|g_p(r)|\leq 1.2\cdot 10^{-6}$$
such that $Q_p:\mathbb{R}^3\rightarrow\mathbb{C}$, defined by
$$Q_p(x):=(1+|x|)^{-\tfrac{2}{p-1}-\tfrac{i}{a_*+a_p}}\left[g_*(|x|,a_p,p)+g_p(|x|)\right],$$
belongs to $L^{p+1}(\mathbb{R}^3)\cap\dot{H}^1(\mathbb{R}^3)\cap C^\infty(\mathbb{R}^3)\setminus\{0\}$ and 
$$\psi_{p,*}(t,x):=(2(a_*+a_p))^{-\tfrac{1}{p-1}}(1-t)^{-\tfrac{1}{p-1}-\tfrac{i}{2(a_*+a_p)}}Q_p\left(\frac{x}{\sqrt{2(a_*+a_p)(1-t)}}\right)$$
satisfies
$$i\partial_t\psi_{p,*}(t,x)+\Delta_x\psi_{p,*}(t,x)+\psi_{p,*}(t,x)|\psi_{p,*}(t,x)|^{p-1}=0$$
for all $(t,x)\in(-\infty,1)\times\mathbb{R}^3$.
\end{thm}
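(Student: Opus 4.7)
The plan is to cast the profile equation \eqref{QEq} as a fixed-point problem for the pair $(a_p, g_p)$ in a small product ball, built so that the cubic data of \cite{DS} serve as an approximate solution for all $p$ in a neighborhood of $3$. First I would substitute the prescribed ansatz into \eqref{QEq}, conjugate out the prefactor $(1+r)^{-2/(p-1)-i/(a_*+a_p)}$, and pass to the M\"obius variable $y=(r-1)/(r+1)$, in which $P_*$ is literally a polynomial. This turns \eqref{QEq} into a second-order linear ODE for $g_p$ of the schematic form
\[
\mathcal{L}_{p,a_p}\,g_p \;=\; \mathcal{R}(a_p,p)+\mathcal{N}(g_p,a_p,p),
\]
where $\mathcal{L}_{p,a_p}$ is the linearization at the approximation $g_*$, $\mathcal{R}$ is the residual obtained by inserting $g_*$ alone, and $\mathcal{N}$ collects the terms at least quadratic in $g_p$. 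The weighted norm $\|g\|:=2\sup_{r>0}r|g'(r)|+\sup_{r>0}|g(r)|$ of the statement is the natural function space, since it is the one in which the Chebyshev expansion of $P_*$ from \cite{DS} comes with explicit error control.

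The key analytic step is then to invert $\mathcal{L}_{p,a_p}$ modulo a one-dimensional obstruction and to use $a_p$ to kill that obstruction. The ansatz eliminates the phase and scaling symmetries of \eqref{QEq} up to a single remaining real parameter, and $a_p$ is determined by a scalar compatibility condition (orthogonality to an adjoint cokernel element, or equivalently a matching condition at the origin or infinity) that places the right-hand side in the range of $\mathcal{L}_{p,a_p}$. For $(a_p,p)=(0,3)$ the requisite quantitative invertibility is precisely the content of the Chebyshev pseudo-spectral analysis of \cite{DS}. For $p$ close to $3$ the coefficients of $\mathcal{L}_{p,a_p}$, as well as $\mathcal{R}$ and $\mathcal{N}$, depend continuously on $(a_p,p)$, so the bounds transfer by an explicit Green's-function/variation-of-parameters representation of the inverse together with a Neumann-type perturbation.

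With a bounded right inverse in hand, the Banach fixed-point theorem applied to
\[
\Phi(a_p,g_p):=\bigl(\mathcal{A}(g_p,p),\;\mathcal{L}_{p,a_p}^{-1}\!\left[\mathcal{R}(a_p,p)+\mathcal{N}(g_p,a_p,p)\right]\bigr)
\]
on $\{|a_p|\le 10^{-10}\}\times\{\|g_p\|\le 1.2\cdot 10^{-6}\}$, where $\mathcal{A}$ encodes the scalar compatibility condition, produces the required $(a_p,g_p)$; smoothness of $Q_p$ away from the origin, regularity at the origin, and the stated $L^{p+1}\cap\dot{H}^1$ membership then follow from the explicit ansatz and elliptic bootstrapping of the profile equation. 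The main obstacle is quantitative: the nonlinearity $z|z|^{p-1}$ is only $C^1$ (not analytic) in $p$, so to keep $\mathcal{R}$ below the tight threshold $1.2\cdot 10^{-6}$ inherited from the cubic estimate of \cite{DS} one must track the Lipschitz dependence on $p$ of every ingredient of $\mathcal{L}_{p,a_p}$, $\mathcal{R}$, $\mathcal{N}$, and the inverse uniformly. This tracking is what forces $I_*$ to be a possibly small but nontrivial open interval around $3$, and it is what allows the entire deformation in $p$ to be absorbed into the two small unknowns $a_p$ and $g_p$ without any additional computer-assisted input.
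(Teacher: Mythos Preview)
Your outline captures the broad strategy correctly --- compactify via $y=(r-1)/(r+1)$, linearize at the approximation, invert modulo a one-dimensional defect, and use the free parameter $a_p$ to kill that defect --- but it diverges from the paper in two technical respects, one of which is a genuine gap.

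First, the paper does not invert $\mathcal{L}_{a,p}$ itself (modulo a cokernel or by Neumann perturbation of the $p=3$ inverse). Instead it builds an explicit \emph{approximate} operator $\tilde{\mathcal{L}}_{a,p}$ from a $p$-dependent approximate fundamental system (the right-hand solutions $f_{R,j}$ carry $p$-dependent leading terms at $y=1$, while the left-hand system is kept from \cite{DS}), writes down its Green-type ``false inverse'' $\tilde{\mathcal{L}}_{a,p}^{-1}$ by variation of constants, and absorbs the small discrepancy $\tilde{\mathcal{L}}_{a,p}-\mathcal{L}_{a,p}$ into the right-hand side of the contraction. The one-dimensional obstruction is not an adjoint cokernel projection but the coefficient $\psi(a,p,g)$ of a specific fundamental solution singular at $y=-1$, which must vanish for the corrected inverse $\mathcal{J}(a,p,g)$ to land in $X$. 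This is close in spirit to what you describe but requires no spectral information about $\mathcal{L}_{a,p}$ and reuses the computer-assisted bounds of \cite{DS} by pure continuity.

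Second --- and this is the gap --- you propose a \emph{joint} Banach fixed point on the product $\{|a_p|\le 10^{-10}\}\times\{\|g_p\|\le 1.2\cdot10^{-6}\}$. For that you would need the scalar map $\mathcal{A}$ to be a contraction in $a_p$, i.e.\ a Lipschitz bound with constant below $1$ on how the compatibility condition depends on $a$. No such bound is provided in \cite{DS}, and it is not obvious: the condition depends on $a$ through the connection matrix $M(a,p)$ and the fundamental system in a complicated way. The paper avoids this entirely by decoupling: it first runs the contraction in $f$ alone for \emph{every} $a\in J_*$, obtaining a continuous family $a\mapsto f_{a,p}$, and then applies the intermediate value theorem to the real-valued map $a\mapsto\psi(a,p,\mathcal{G}(a,p,f_{a,p}))$, which changes sign at $a=\pm10^{-10}$ by continuity from the $p=3$ case. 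This two-step approach needs only continuity in $a$ and a sign change, both inherited for free from \cite{DS}, whereas your joint scheme demands an additional quantitative estimate you have not supplied.
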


\subsection{Nature of the problem}
Concerning the properties of the blowup profile, it is worth noting that the flow of Eq.~(\ref{p-NLS}) preserves the $L^2$-norm and the energy
$$E[\psi(t,\cdot)]:=\frac{1}{2}\|\nabla\psi(t,\cdot)\|_{L^2(\mathbb{R}^d)}^2-\frac{1}{p+1}\|\psi(t,\cdot)\|_{L^{p+1}(\mathbb{R}^d)}^{p+1}=E[\psi(0,\cdot)],$$ 
where the former is invariant under the natural space-time scaling for $p=1+\tfrac{4}{d}$ (mass critical) and the latter for $p=1+\tfrac{4}{d-2}$ (energy critical). The cubic case $p=3$ in $3$ dimensions is mass supercritical and energy subcritical and so is the case $p$ close to $3$ studied in this paper. As a consequence, the self-similar profile cannot be in $L^2$ and must necessarily have vanishing energy, see e.g.~\cite{Sul}. Moreover, the asymptotic behavior of radial solutions to the profile equation Eq.~\eqref{QEq} is known \cite{Mesurier, Wang} and given by
$c_1Q_{p,1}+c_2Q_{p,2}$, with $c_1,c_2\in\mathbb{C}$ and
$$Q_{p,1}(\xi)\sim|\xi|^{-\tfrac{2}{p-1}-\tfrac{i}{\alpha_p}},\hspace{1cm}Q_{p,2}(\xi)\sim e^{-i\alpha_p\tfrac{\xi^2}{2}}|\xi|^{\tfrac{2}{p-1}-3+\tfrac{i}{\alpha_p}}$$
as $|\xi|\rightarrow\infty$.
In particular, a solution to Eq.~\eqref{QEq} has finite energy if and only if $c_2=0$, which is conjectured to hold only for specific values of $\alpha_p$. In other words, proving the existence of self-similar blowup amounts to solving a kind of nonlinear eigenvalue problem.

\subsection{Main idea of the proof}
In a naive attempt one could try to prove the result by an implicit function theorem argument, perturbing off the cubic case. 
This fails because of the phase invariance of Eq.~(\ref{QEq}), leading to a linearized operator with a nontrivial kernel. The key idea is to use the ``false inverse'' from \cite{DS} instead. Hence, we will employ the same construction for the linearized operator as in \cite{DS}, with slight adaptations depending on $p$. Arguing by continuity, we can reuse all of the bounds from \cite{DS} and there is no need for additional computer assistance. In particular, this shows that the proof in \cite{DS} is very robust and has clear conceptual advantages over increasingly popular ``brute force'' approaches where one makes standard numerical techniques rigorous by using off-the-shelf solvers based on interval arithmetic.

\subsection{Origin of the problem}
In order to put the problem into context, we briefly recall its origin in plasma physics.
In \cite{Zakh1, Zakh2}, Zakharov introduced a simplified dynamic description of an ionized plasma, based on averaging over the fast oscillation of the electric field. The cubic nonlinear Schrödinger equation, Eq.~(\ref{p-NLS}) with $p=3$, arises as the limit of the Zakharov system when the ion acoustic speed tends to infinity, which corresponds to the assumption that the plasma responds instantaneously to variations in the electric field. Finite-time blowup via radial self-similar solutions was proposed as a mechanism for energy dissipation and numerically confirmed by McLaughlin, Papanicolaou, Sulem, and Sulem \cite{McLPapa} for radial initial data. Moreover, these numerical simulations suggest the convergence to a universal blowup profile corresponding to a radial and (in absolute value) monotonically decreasing solution of Eq.~(\ref{QEq}) with $p=3$ and $\alpha\approx0,917$. The same universality was found even for nonradial data by Landman, Papanicolaou, Sulem, Sulem, and Wang \cite{LandPapa}. The properties of the profile strongly suggest that the self-similar solution constructed in \cite{DS} coincides with the one discovered in the numerical simulations.

\subsection{Related results}
The mathematical literature on the nonlinear Schr\"odinger equation is vast and we only mention a handful of results on blowup.

Self-similar blowup in the slightly mass supercritical case has been subject to various studies. For the cubic equation in dimension $0<d-2\ll 1$, Kopell and Landman \cite{KL11} as well as Rottsch\"afer and Kaper \cite{RK23} proved the existence of a unique $\alpha_3(d)$ and a corresponding radial finite energy self-similar profile. More generally, Bahri, Martel, and Rapha\"el \cite{BMR1} proved the existence of an $\alpha_p$ and a corresponding radial finite energy self-similar profile for $0<1+\tfrac{4}{d}-p\ll1$ in dimension $d\geq 1$, see also \cite{RK24, BCR3, B3, YangRoud}. In the parameter range $0<1+\tfrac{4}{d}-p\ll1$ and $1\leq d\leq5$, Merle, Rapha\"el, and Szeftel \cite{MRS17} constructed an open set of initial data in $H^1$ leading to finite-time self-similar blowup. The first rigorous proof of self-similar blowup far away from the critical case is the aforementioned \cite{DS}. Shortly after, another computer-assisted proof appeared \cite{DahFig24}. The approach in \cite{DahFig24} is very different from \cite{DS} in that it implements a shooting technique based on interval arithmetic.

A very different type of blowup is described by so-called ``standing ring solutions" of Eq.~(\ref{p-NLS}), where the solution blows up on a sphere. Such solutions have been constructed by Holmer, Roudenko \cite{HR9}, Rapha\"el \cite{R21}, and Rapha\"el, Szeftel \cite{RS22}. Fibich, Gabish, and Wang \cite{FGW5, FGW6} numerically observed yet another type of blowup for Eq.~(\ref{p-NLS}). These ``collapsing ring solutions" focus on a sphere that collapses towards the origin and have been proven to exist for $d\geq2$, $0<1+\tfrac{4}{d}-p\ll1$ by Merle, Raphael and Szeft\"el \cite{MRS18} and for $d=p=3$ by Holmer, Perelman, and Roudenko \cite{HPR8}.

We remark in passing that the mass critical case $p=1+\tfrac{4}{d}$ is much better understood. Singularity formation in finite time occurs there as well but the mechanism is fundamentally different and not self-similar, see e.g.~\cite{MRupper}.

\section{Setup of the problem}
\noindent For $p\in(\tfrac{7}{3},5)$ we rewrite Eq.~(\ref{p-NLS}) in similarity coordinates, i.e., we define a new unknown $u$ by
\begin{equation}
    \psi(t,x)=(2\alpha)^{-\tfrac{1}{p-1}}(1-t)^{-\tfrac{1}{p-1}}u\left(-\frac{1}{2\alpha}\log(1-t),\frac{x}{\sqrt{2\alpha(1-t)}}\right),    
\end{equation}
where $\alpha>0$, $t<1$ and $x\in\mathbb{R}^3$. Then $\psi$ satisfies Eq.~(\ref{p-NLS}) if and only if $u$ satisfies
\begin{equation}\label{p_NLS-simcoord}
    i\partial_\tau u(\tau,\xi)+\Delta_\xi u(\tau,\xi)+i\alpha\left[\xi\cdot\nabla_\xi u(\tau,\xi)+\frac{2}{p-1}u(\tau,\xi)\right]+u(\tau,\xi)|u(\tau,\xi)|^{p-1}=0
\end{equation}
for $\tau\in\mathbb{R}$ and $\xi\in\mathbb{R}^3$.
Inserting the ansatz $u(\tau,\xi)=e^{i\tau}Q(\xi)$ for a self-similar profile $Q:\mathbb{R}^3\rightarrow\mathbb{C}$ yields Eq.~(\ref{QEq}). By writing $Q(\xi)=q(|\xi|)$, Eq.~(\ref{p_NLS-simcoord}) transforms into
\begin{equation}\label{q-Eq}
    q''(r)+\left(\frac{2}{r}+i\alpha r\right)q'(r)+\left(\frac{2i\alpha}{p-1}-1\right)q(r)+q(r)|q(r)|^{p-1}=0
\end{equation}
for $r>0$.
\subsection{Compactification}
To overcome the difficulties of the unbounded domain and the oscillatory behavior near infinity of Eq.~(\ref{q-Eq}), we compactify the problem by setting
$$q(r)=(1+r)^{-\tfrac{2}{p-1}-\tfrac{i}{\alpha}}f\left(\frac{r-1}{r+1}\right).$$
Then $q$ satisfies Eq.~(\ref{q-Eq}) if and only if $\mathcal{R}(\alpha,p,f)=0$ on $(-1,1)$, where
$$\mathcal{R}(\alpha,p,f)(y):=f''(y)+p_0(y,\alpha,p)f'(y)+q_0(y,\alpha,p)f(y)+\frac{f(y)|f(y)|^{p-1}}{(1-y)^2},$$
and
\begin{align*}
    p_0(y,\alpha,p)&:=\frac{4i\alpha}{(1-y)^3}-\frac{2i\alpha}{(1-y)^2}-\frac{\frac{4}{p-1}+\frac{2i}{\alpha}}{1-y}+\frac{2}{1+y},\\
    q_0(y,\alpha,p)&:=\frac{\frac{4i\alpha}{p-1}-2}{(1-y)^3}+\frac{\frac{2}{p-1}\left(\frac{2}{p-1}-1\right)+\left(\frac{4}{p-1}-1\right)\frac{i}{\alpha}-\frac{1}{\alpha^2}}{(1-y)^2}-\frac{\frac{2}{p-1}+\frac{i}{\alpha}}{1-y}-\frac{\frac{2}{p-1}+\frac{i}{\alpha}}{1+y}.
\end{align*}
We fix the interval $$J_*:=[-10^{-10},10^{-10}]$$ for the parameter $a$ as in \cite{DS}, and let $\mathring{I}\subset(\tfrac{7}{3},5)$ be an open interval with $3\in \mathring{I}$ and denote by $I$ its closure.
For $a\in J_*$ and $p\in I$ we set
\begin{equation}\label{f*def}
f_*(y,a,p):=P_*(y)+\frac{2i(p-1)(a_*+a)}{(p-1)-2i(a_*+a)}P_*'(1)-P_*(1),    
\end{equation}
with $P_*:\mathbb{Q}\rightarrow\mathbb{Q}$ and $a_*\in\mathbb{Q}$ as in Definition \ref{PandaDef}. Observe that $f_*\in C^2([-1,1]\times J_*\times I)$ and that the $(a,p)$-dependent correction term accounts for the leading-order asymptotics. We claim that by choosing $I$ sufficiently small around $3$, for every $p\in I$ we can find a correction $(a,f)$ such that $\mathcal{R}(a_*+a,p,f_*(\cdot,a,p)+f)=0$. 
Linearizing in $f$ gives
$$\mathcal{R}(a_*+a,p,f_*(\cdot,a,p)+f)=\mathcal{R}(a_*+a,p,f_*(\cdot,a,p))+\mathcal{L}_{a,p}(f)+\mathcal{N}_{a,p}(f),$$
where $\mathcal{L}_{a,p}$ is $\mathbb{R}$-linear and defined by
\begin{align*}
 \mathcal{L}_{a,p}(f)(y):=&\ f''(y)+p_0(y,a_*+a,p)f'(y)+q_0(y,a_*+a,p)f(y)\\
 &+\frac{p+1}{2}\frac{|f_*(y,a,p)|^{p-1}}{(1-y)^2}f(y)+\frac{p-1}{2}\frac{|f_*(y,a,p)|^{p-3}f_*(y,a,p)^2}{(1-y)^2}\bar{f}(y),   
\end{align*}
and
\begin{align*}
    \mathcal{N}_{a,p}(f)(y):=&\ \frac{(f_*(y,a,p)+f(y))|f_*(y,a,p)+f(y)|^{p-1}}{(1-y)^2}\\&-\frac{p+1}{2}\frac{|f_*(y,a,p)|^{p-1}}{(1-y)^2}f(y)-\frac{p-1}{2}\frac{|f_*(y,a,p)|^{p-3}f_*(y,a,p)^2}{(1-y)^2}\bar{f}(y)
\end{align*}
contains the nonlinear terms in $f$. Consequently, the problem boils down to solving
\begin{equation}\label{probinLap}
\mathcal{L}_{a,p}(f)=-\mathcal{R}(a_*+a,p,f_*(\cdot,a,p))-\mathcal{N}_{a,p}(f).    
\end{equation}

\subsection{Strategy of the proof}
\begin{itemize}
    \item First, note that Eq.~(\ref{q-Eq}) is phase invariant: If $q$ is a solution then so is $e^{i\theta}q$ for any $\theta\in\mathbb{R}$. Consequently, the linearization has a nontrivial kernel and this is a severe difficulty.
As a first step, we will construct an approximate fundamental system for $\mathcal{L}_{a,p}$, which defines an approximate $\mathbb{R}$-linear operator $\tilde{\mathcal{L}}_{a,p}$ that is close to $\mathcal{L}_{a,p}$ in a suitable sense. By the variation of constants formula, we can then formally invert the operator $\tilde{\mathcal{L}}_{a,p}$, such that Eq.~(\ref{probinLap}) is formally equivalent to 
    \begin{equation}\label{ContrEq}
    f=\tilde{\mathcal{L}}_{a,p}^{-1}\left(\tilde{\mathcal{L}}_{a,p}(f)-\mathcal{L}_{a,p}(f)-\mathcal{N}_{a,p}(f)-\mathcal{R}(a_*+a,p,f_*(\cdot,a,p)\right)
\end{equation}
    However, while $\tilde{\mathcal{L}}_{a,p}$ may not have a nontrivial kernel, it will certainly have an eigenvalue very close to $0$. Thus, in the construction of the inverse we are forced to choose a fundamental system that involves a function that is bad at both endpoints. This means that 
    $\tilde{\mathcal{L}}_{a,p}(\tilde{\mathcal{L}}_{a,p}^{-1}(g))=g$ formally but $\tilde{\mathcal{L}}_{a,p}^{-1}$ does not map into the (yet to be defined) right space and is therefore not an inverse of $\tilde{\mathcal{L}}_{a,p}$ in the functional analytic sense.
    \item Next, we add a regularization term of the form $f_0\psi(a,p,g)$, which is essentially the projection of $\tilde{\mathcal{L}}_{a,p}^{-1}(g)$ onto the fundamental solution that is singular at $-1$ multiplied by a smooth cut-off function supported near $-1$. In this way, we obtain a new operator of the form
    $$\mathcal{J}(a,p,g)=\tilde{\mathcal{L}}_{a,p}^{-1}(g)+f_0\psi(a,p,g)$$
    which now maps into the right space but is no longer a formal inverse of $\tilde{\mathcal{L}}_{a,p}$, unless $\psi(a,p,g)=0$.
    \item By the contraction mapping principle we then solve
    $$f=\mathcal{J}\left(a,p,\tilde{\mathcal{L}}_{a,p}(f)-\mathcal{L}_{a,p}(f)-\mathcal{N}_{a,p}(f)-\mathcal{R}(a_*+a,p,f_*(\cdot,a,p)\right)$$
    and obtain a solution for all $a\in J_*$ and $p$ sufficiently close to $3$.
    \item Finally, by further shrinking $I$ if necessary, we prove the existence of a compact neighborhood $I_*$ of $3$ such that for all $p\in I_*$
    $$a\mapsto\psi\left(a,p,\tilde{\mathcal{L}}_{a,p}(f_{a,p})-\mathcal{L}_{a,p}(f_{a,p})-\mathcal{N}_{a,p}(f_{a,p})-\mathcal{R}(a_*+a,p,f_*(\cdot,a,p)\right)$$
    changes sign and hence, by continuity, vanishes at some $a$. In particular, $(a,f_{a,p})$ satisfy Eq.~(\ref{ContrEq}), or, equivalently,
    $$\mathcal{R}(a_*+a,p,f_*(\cdot,a,p)+f_{a,p})=0.$$
\end{itemize}
The proof extends the result of \cite{DS} which treats the case $p=3$. In \cite{DS}, explicit bounds for all of the terms appearing in the contraction argument are established using computer assistance.
The present paper does not require any additional computer assistance since all of the bounds from \cite{DS} can be reused by continuity as long as $p$ is sufficiently close to $3$.

The main adaptations for the generalization in $p$ concern the definition of $f_*$, Eq.~(\ref{f*def}), and the approximate fundamental system for $\mathcal{L}_{a,p}$. However, as one might expect in view of Eq.~(\ref{QEq}), the oscillatory behavior of fundamental solutions is not affected by the parameter $p$.
\subsection{Function spaces}
To analyze the asymptotic behavior of solutions to the equation $\mathcal{L}_{a,p}(f)=0$, we consider the $\mathbb{C}$-linear linearization at $0$,
$$L_{\alpha,p}(f)(y):=f''(y)+p_0(y,\alpha,p)f'(y)+q_0(y,\alpha,p)f(y)=0,$$
which (for $\alpha=a_*+a$) to leading order has the same behavior near the endpoints as $\mathcal{L}_{a,p}(f)=0$. By a formal analysis as in \cite{DS}, we expect a singular oscillatory solution $\widehat{f_1}(y,\alpha,p)\sim(1-y)^{3-\tfrac{4}{p-1}}e^{i\varphi(y,\alpha)}$ near the endpoint $1$, where
$$\varphi(y,\alpha)=-\frac{2\alpha}{(1-y)^2}+\frac{2\alpha}{1-y}-\frac{2}{\alpha}\log(1-y),$$
and a smooth solution $f_1(y,\alpha,p)\sim 1$.

Near -1, we find two solutions $f_{-1}(y,\alpha,p)\sim 1$ and $\widehat{f_{-1}}(y,\alpha,p)\sim (1+y)^{-1}$ by Frobenius analysis.
For the Wronskian we obtain the expression
\begin{align*}
    W(f_{-1}(\cdot,\alpha,p),f_1(\cdot,\alpha,p))(y)&=W(f_{-1}(\cdot,\alpha,p),f_1(\cdot,\alpha,p))(0)e^{-\int_0^yp_0(x,\alpha,p)dx}\\
    &=W(f_{-1}(\cdot,\alpha,p),f_1(\cdot,\alpha,p))(0)(1-y)^{-\tfrac{4}{p-1}}(1+y)^{-2}e^{i\varphi(y,\alpha)}=:W_*(y,\alpha,p).
\end{align*}
Assuming that $W(f_{-1}(\cdot,\alpha,p),f_1(\cdot,\alpha,p))(0)\neq 0$, the variation of constants formula yields the inverse operator
$$L^{-1}_{\alpha,p}(g)(y)=f_{-1}(y,\alpha,p)\int_y^1\frac{f_1(x,\alpha,p)}{W_*(x,\alpha,p)}g(x)dx+f_1(y,\alpha,p)\int_{-1}^y\frac{f_{-1}(x,\alpha,p)}{W_*(x,\alpha,p)}g(x)dx.$$
From
\begin{align*}
    |f_{-1}(y,\alpha,p)|&\lesssim 1,&&|f_{-1}'(y,\alpha,p)|\lesssim(1-y)^{-\tfrac{4}{p-1}},\\
    |f_{1}(y,\alpha,p)|&\lesssim (1+y)^{-1},&&\ \ |f_{1}'(y,\alpha,p)|\lesssim(1+y)^{-2},
\end{align*}
we infer the bounds
\begin{align*}
    |L^{-1}_{\alpha,p}(g)(y)|&\lesssim\int_y^1(1+x)(1-x)^{\tfrac{4}{p-1}}|g(x)|dx+(1+y)^{-1}\int_{-1}^y(1+x)^2(1-x)^{\tfrac{4}{p-1}}|g(x)|dx\\
    &\lesssim\sup_{y\in(-1,1)}(1+y)(1-y)^2|g(y)|\int_{-1}^1(1-x)^{\tfrac{4}{p-1}-2}dx\\
    &\lesssim\sup_{y\in(-1,1)}(1+y)(1-y)^2|g(y)|
\end{align*}
and
\begin{align*}
    (1-y^2)|L^{-1}_{\alpha,p}(g)'(y)|\lesssim&\ (1-y)^{1-\tfrac{4}{p-1}}\int_y^1(1+x)(1-x)^{\tfrac{4}{p-1}}|g(x)|dx\\&+(1+y)^{-1}\int_{-1}^y(1+x)^2(1-x)^{\tfrac{4}{p-1}}|g(x)|dx\\
    \lesssim&\sup_{y\in(-1,1)}(1+y)(1-y)^2|g(y)|\\&\times\left((1-y)^{1-\tfrac{4}{p-1}}\int_y^1(1-x)^{\tfrac{4}{p-1}-2}dx+\int_{-1}^y(1-x)^{\tfrac{4}{p-1}-2}dx\right)\\
    \lesssim&\sup_{y\in(-1,1)}(1+y)(1-y)^2|g(y)|
\end{align*}
for all $p\in\left(\tfrac{7}{3},5\right)$. Motivated by this, we define $X$ as the space of all $f\in C([-1,1])\cap C^1(-1,1)$ such that $y\mapsto(1-y^2)f'(y)$ extends to a continuous function on $[-1,1]$ and $Y$ as the space of all $f\in C(-1,1)\setminus\lbrace 0\rbrace$ such that $y\mapsto(1+y)(1-y)^2f(y)$ extends to a bounded continuous function on $[-1,1]\setminus\lbrace 0\rbrace$.
Defining the norms
\begin{align*}
   \|f\|_X&:=\sup_{y\in(-1,1)}(1-y^2)|f'(y)|+\|f\|_{L^\infty(-1,1)},\\\|f\|_Y&:=\sup_{y\in(-1,1)}(1+y)(1-y)^2|f(y)|,
\end{align*}
we obtain the Banach spaces $(X,\|.\|_X)$ and $(Y,\|.\|_Y)$ and the bound
$$\|L^{-1}_{\alpha,p}(g)\|_X\lesssim\|g\|_Y.$$
Note that in general $\mathcal{R}(\alpha,p,f)$ is cubically singular at $1$ and hence does not belong to $Y$. The $(a,p)$-dependent correction term in the definition of $f_*$ (\ref{f*def}) serves to cancel one order of this singularity.
\begin{lem}\label{Rcontlem}
We have $\mathcal{R}(a_*+a,p,f_*(\cdot,a,p))\in Y$ for all $(a,p)\in J_*\times I$. Furthermore, the map
$$(a,p)\mapsto\mathcal{R}(a_*+a,p,f_*(\cdot,a,p)):J_*\times I\rightarrow Y$$
is continuous.
\end{lem}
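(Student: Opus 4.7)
The plan is to verify directly that $\mathcal{R}(a_*+a,p,f_*(\cdot,a,p))$ lies in $Y$ by controlling its singularities at the two endpoints, and then to upgrade the pointwise control to continuity in $(a,p)$. Since $f_*(y,a,p)$ differs from the polynomial $P_*(y)$ only by an $(a,p)$-dependent constant, its derivatives $f_*'=P_*'$ and $f_*''=P_*''$ are smooth on $[-1,1]$, and the nonlinear term $(1-y)^{-2}f_*|f_*|^{p-1}$ is at worst $(1-y)^{-2}$-singular. The only potential source of cubic singularity at $y=1$ is the coefficient of $(1-y)^{-3}$ in $p_0(y,\alpha,p)f_*'(y)+q_0(y,\alpha,p)f_*(y)$, which equals
$$4i\alpha\,P_*'(1)+\left(\tfrac{4i\alpha}{p-1}-2\right)f_*(1,a,p),$$
with $\alpha=a_*+a$. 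A direct computation shows that the prescribed value $f_*(1,a,p)=\frac{2i(p-1)\alpha P_*'(1)}{(p-1)-2i\alpha}$ is exactly the constant that makes this coefficient vanish, so $\mathcal{R}(a_*+a,p,f_*)$ has at most a $(1-y)^{-2}$ singularity at $y=1$, yielding boundedness of $(1-y)^2\mathcal{R}$ there.

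At the endpoint $y=-1$ the only singular contributions come from the $(1+y)^{-1}$ terms in $p_0$ and $q_0$, while $f_*$, $f_*'$, $f_*''$ and the nonlinear term are all bounded near $-1$. Hence $\mathcal{R}(a_*+a,p,f_*)$ is at most $(1+y)^{-1}$-singular at $y=-1$, so $(1+y)(1-y)^2\mathcal{R}$ stays bounded there. Combined with continuity of $\mathcal{R}(a_*+a,p,f_*)(y)$ in $y\in(-1,1)\setminus\{0\}$ for each fixed $(a,p)$, this yields $\mathcal{R}(a_*+a,p,f_*(\cdot,a,p))\in Y$ for every $(a,p)\in J_*\times I$.

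For the continuity statement, I would proceed term by term, exploiting that the coefficient functions $p_0(\cdot,a_*+a,p)$ and $q_0(\cdot,a_*+a,p)$ depend jointly continuously on $(a,p)$ away from $y=\pm1$, while $f_*(\cdot,a,p)$ together with its derivatives is jointly continuous on $[-1,1]\times J_*\times I$ by inspection of the defining formula. Away from the endpoints, all contributions to $\mathcal{R}$ are uniformly continuous on compact sub-intervals of $(-1,1)$. Near $y=-1$ the weighted function $(1+y)(1-y)^2\mathcal{R}$ reduces (to leading order) to a continuous linear combination of $f_*(-1,a,p)$, $f_*'(-1,a,p)$ and the coefficients of $(1+y)^{-1}$ in $p_0,q_0$, all of which depend continuously on $(a,p)$. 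Near $y=1$ the cancellation of the $(1-y)^{-3}$ coefficient holds for every $(a,p)\in J_*\times I$ by the identity just established, and the remaining $(1-y)^{-2}$ coefficient is likewise continuous in $(a,p)$; the nonlinear contribution $f_*|f_*|^{p-1}/(1-y)^2$ is handled via joint continuity of $(z,p)\mapsto z|z|^{p-1}$ on bounded subsets of $\mathbb{C}\times(\tfrac{7}{3},5)$ together with the uniform bound on $f_*$. The main obstacle is the bookkeeping near $y=1$: one must expand $p_0f_*'+q_0f_*$ to one order beyond the leading cubic singularity so that the uniform-in-$(a,p)$ cancellation of the $(1-y)^{-3}$ coefficient can be combined with continuity of the next-order $(1-y)^{-2}$ coefficient, and this expansion must be compatible with the $p$-dependent nonlinearity. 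Once the cancellation is tracked cleanly, uniform convergence in the $Y$-norm follows from standard dominated-convergence arguments on the compact parameter set $J_*\times I$.
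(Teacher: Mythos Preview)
Your proposal is correct and follows essentially the same approach as the paper: both hinge on the observation that the choice $f_*(1,a,p)=\tfrac{2i(p-1)(a_*+a)}{(p-1)-2i(a_*+a)}P_*'(1)$ forces the $(1-y)^{-3}$ coefficient $4i(a_*+a)f_*'(1,a,p)+\bigl(\tfrac{4i(a_*+a)}{p-1}-2\bigr)f_*(1,a,p)$ to vanish, after which $(1+y)(1-y)^2\mathcal{R}(a_*+a,p,f_*(\cdot,a,p))$ extends continuously to $[-1,1]\times J_*\times I$. The paper packages the continuity step slightly more efficiently by invoking uniform continuity on this compact set rather than your term-by-term dominated-convergence argument, but the content is the same.
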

\begin{proof}
    Let $(a,p)\in J_*\times I$. Then\footnote{A prime denotes the derivative with respect to the first variable, i.e., $f_*'(y,a,p):=\partial_yf_*(y,a,p)$.}
    $$\lim_{y\rightarrow1}(1-y)^3\mathcal{R}(a_*+a,p,f_*(\cdot,a,p))(y)=4i(a_*+a)f_*'(1,a,p)+\left(\frac{4i(a_*+a)}{p-1}-2\right)f_*(1,a,p)=0$$
    and $f_*(\cdot,a,p)\in C^2([-1,1])$ show that the one-sided limits $\lim_{y\rightarrow\pm1}(1+y)(1-y)^{2}\mathcal{R}(a_*+a,p,f_*(\cdot,a,p))\in\mathbb{C}$ exist and depend continuously on $(a,p)$. In particular, the map
    $$(y,a,p)\mapsto(1+y)(1-y)^2\mathcal{R}(a_*+a,p,f_*(\cdot,a,p))(y)$$
    extends continuously to $[-1,1]\times J_*\times I$ and is thus uniformly continuous in $y$.
\end{proof}
\section{Approximation of the operator $\mathcal{L}_{a,p}$}
\noindent Since the operator $\mathcal{L}_{a,p}$ is not $\mathbb{C}$-linear, but only $\mathbb{R}$-linear, the real- and imaginary parts of the fundamental system need to be separated, leading to a system of $4$ fundamental solutions. We will use an approximate left fundamental system adapted to the boundary conditions at $-1$, which are independent of $a$ and $p$, and an approximate right fundamental system adapted to the $(a,p)$-dependent boundary conditions at $1$ to sufficient order. The approximate left and right fundamental system are then being connected at $0$ by using a suitable $\mathbb{R}^{4\times 4}$ matrix transformation.

For $y\in(-1,0]$ we set
\begin{align*}
    f_{L,1}(y)&:=\ 1+(1+y)P_{L,1}(y), &&f_{L,2}(y):=i+(1+y)P_{L,2}(y),\\
    f_{L,3}(y)&:=\frac{1}{1+y}\left[1+(1+y)^2P_{L,3}(y)\right],&&f_{L,4}(y):=\frac{i}{1+y}\left[1+(1+y)^2P_{L,4}(y)\right],
\end{align*}
where
$$P_{L,j}(y):=\sum_{n=0}^{16}c_n(P_{L,j})T_n(2y+1),\hspace{1cm}j\in\lbrace 1,2,3,4\rbrace,$$
with $(c_n(P_{L,j}))_{n=0}^{16}\subset\mathbb{C}$ given in \cite{DS}, Appendix A.2,
and
$$F_L:=\begin{pmatrix}\Re f_{L,1}&\Re f_{L,2}&\Re f_{L,3}&\Re f_{L,4}\\
\Im f_{L,1}&\Im f_{L,2}&\Im f_{L,3}&\Im f_{L,4}\\
\Re f'_{L,1}&\Re f'_{L,2}&\Re f'_{L,3}&\Re f'_{L,4}\\
\Im f'_{L,1}&\Im f'_{L,2}&\Im f'_{L,3}&\Im f'_{L,4}
\end{pmatrix}.$$
\\
For $(y,a,p)\in[0,1)\times J_*\times I$ we set
\begin{align*}
    f_{R,1}(y,a,p):=&\ 1+\left(\frac{a_*+a+i}{2(a_*+a)}+\frac{3-p}{2(p-1)}\right)(1-y)+(1-y)^2P_{R,1}(y),\\
    f_{R,2}(y,a,p):=&\ i+i\left(\frac{a_*+a+i}{2(a_*+a)}+\frac{3-p}{2(p-1)}\right)(1-y)+(1-y)^2P_{R,2}(y),\\
    f_{R,3}(y,a,p):=&\ e^{i\varphi(y,a_*+a)}(1-y)^{3-\tfrac{4}{p-1}}\left[1+\frac{2(a_*+a)-i}{2(a_*+a)}(1-y)+(1-y)^2P_{R,3}(y)\right]\\
    &+e^{-i\varphi(y,a_*+a)}(1-y)^{7-\tfrac{4}{p-1}}Q_{R,3}(y),\\
    f_{R,4}(y,a,p):=&\ ie^{i\varphi(y,a_*+a)}(1-y)^{3-\tfrac{4}{p-1}}\left[1+\frac{2(a_*+a)-i}{2(a_*+a)}(1-y)+(1-y)^2P_{R,4}(y)\right]\\
    &+ie^{-i\varphi(y,a_*+a)}(1-y)^{7-\tfrac{4}{p-1}}Q_{R,4}(y),
\end{align*}
where
\begin{align*}
    P_{R,j}(y)&:=\sum_{n=0}^{30}c_n(P_{R,j})T_n(2y-1),\hspace{1cm}j\in\lbrace 1,2,3,4\rbrace,\\
    Q_{R,j}(y)&:=\sum_{n=0}^{26}c_n(Q_{R,j})T_n(2y-1),\hspace{1cm}j\in\lbrace 3,4\rbrace,
\end{align*}
with $(c_n(P_{R,j}))_{n=0}^{30}\subset\mathbb{C}$ and $(c_n(Q_{R,j}))_{n=0}^{26}\subset\mathbb{C}$ given in \cite{DS}, Appendix A.3,
and
$$F_R(y,a,p):=\begin{pmatrix}\Re f_{R,1}(y,a,p)&\Re f_{R,2}(y,a,p)&\Re f_{R,3}(y,a,p)&\Re f_{R,4}(y,a,p)\\
\Im f_{R,1}(y,a,p)&\Im f_{R,2}(y,a,p)&\Im f_{R,3}(y,a,p)&\Im f_{R,4}(y,a,p)\\
\Re f'_{R,1}(y,a,p)&\Re f'_{R,2}(y,a,p)&\Re f'_{R,3}(y,a,p)&\Re f'_{R,4}(y,a,p)\\
\Im f'_{R,1}(y,a,p)&\Im f'_{R,2}(y,a,p)&\Im f'_{R,3}(y,a,p)&\Im f'_{R,4}(y,a,p)
\end{pmatrix}.$$\\
Finally, for $(y,a,p)\in(-1,1)\times J_*\times I$ we set
$$F(y,a,p):=1_{(-1,0]}(y)F_L(y)M(a,p)+1_{(0,1)}(y)F_R(y,a,p)$$
with
$$M(a,p):=F_L^{-1}(0)F_R(0,a,p),$$ and\footnote{For $A\in\mathbb{C}^{n\times n}$, we denote by $A_k$ the $k$-th column vector and by $A_k^j$ the element in the $j$-th row and $k$-th column.}
$$f_j(y,a,p):=F(y,a,p)_j^1+iF(y,a,p)_j^2.$$
Then we have
$$F(y,a,p)=\begin{pmatrix}\Re f_1(y,a,p)&\Re f_2(y,a,p)&\Re f_3(y,a,p)&\Re f_4(y,a,p)\\
\Im f_1(y,a,p)&\Im f_2(y,a,p)&\Im f_3(y,a,p)&\Im f_4(y,a,p)\\
\Re f'_1(y,a,p)&\Re f'_2(y,a,p)&\Re f'_3(y,a,p)&\Re f'_4(y,a,p)\\
\Im f'_1(y,a,p)&\Im f'_2(y,a,p)&\Im f'_3(y,a,p)&\Im f'_4(y,a,p)
\end{pmatrix}.$$
\begin{rem}
First, note that in order for $F(\cdot,a,p)$ to be a fundamental system, we need to ensure linear independence of $f_j$. Moreover, the $\mathbb{R}^{4\times 4}$-matrix $M(a,p)$ transforms the system on the left-hand side,  to match the right-hand side system at $0$. This yields the global system $F$, such that $f_j(\cdot,a,p)\in C^1(-1,1)$. However, this transformation makes the singular behavior of $f_{L,3}$ and $f_{L,4}$ contribute to all four fundamental solutions $f_j$. By requiring $M(a,p)_1^3\neq 0$, we can ensure that $f_1$ is singular at $-1$, which will allow us to cancel one of the singular terms (see Definition \ref{LinvDef}).
\end{rem}
\begin{lem}\label{admfundmlem}
    There exists an open interval $\mathring{I}\subset(\tfrac{7}{3},5)$ such that $3\in \mathring{I}$, $\det(F_L(y))\neq 0$ for all $y\in(-1,0)$, $\det(F_R(y,a,p))\neq 0$ for all $(y,a,p)\in(0,1)\times J_*\times I$ and $M(a,p)_1^3\neq 0$ for all $(a,p)\in J_*\times I$, where $I$ denotes the closure of $\mathring{I}$.
\end{lem}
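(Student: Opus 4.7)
The approach is to leverage \cite{DS}, which establishes all three non-vanishing statements at $p=3$ (and for all $a\in J_*$), and to propagate them to a neighborhood of $p=3$ by continuity in $(a,p)$. The first claim, $\det(F_L(y))\neq 0$ for $y\in(-1,0)$, is immediate: $F_L$ does not depend on $(a,p)$, so it is exactly the statement proved in \cite{DS}. For $M(a,p)_1^3\neq 0$, I would observe that $M(a,p)=F_L(0)^{-1}F_R(0,a,p)$ is jointly continuous in $(a,p)$, and that by \cite{DS} the continuous function $a\mapsto |M(a,3)_1^3|$ is bounded below by some $\mu>0$ on the compact set $J_*$. Uniform continuity on a compact neighborhood of $J_*\times\{3\}$ then produces an open interval $\mathring{I}_1\ni 3$ such that $|M(a,p)_1^3|>\mu/2$ for all $(a,p)\in J_*\times I_1$, where $I_1$ denotes the closure of $\mathring{I}_1$.

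The main assertion is $\det(F_R(y,a,p))\neq 0$ on $(0,1)\times J_*\times I$. The difficulty is that the domain is not compact near $y=1$, where the columns corresponding to $f_{R,3}, f_{R,4}$ carry oscillatory factors $e^{\pm i\varphi(y,a_*+a)}$ and powers of $(1-y)$ with $p$-dependent exponents. I would split the argument into two regions. On the compact set $[0,1-\delta]\times J_*$ for any fixed $\delta>0$, the determinant is jointly continuous in $(y,a,p)$ and does not vanish at $p=3$; compactness plus uniform continuity then yields an open interval $\mathring{I}_\delta\ni 3$ on which $\det F_R$ remains nonzero there.

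For the boundary strip $(1-\delta,1)\times J_*$ I would rescale $F_R$ to absorb the singular and oscillatory asymptotics of columns 3 and 4. Their function rows are of order $(1-y)^{3-\tfrac{4}{p-1}}$ and their derivative rows of order $(1-y)^{-\tfrac{4}{p-1}}$ (the additional $(1-y)^{-3}$ arising from $\varphi'(y,\alpha)\sim \tfrac{4\alpha}{(1-y)^3}$), both modulated by $e^{\pm i\varphi(y,a_*+a)}$. After extracting these factors via suitable row and column operations, one obtains a rescaled determinant $\tilde\Delta(y,a,p)$ that extends continuously to $y=1$; its value there depends only on the explicit leading coefficients in the definitions of $f_{R,1},\ldots,f_{R,4}$. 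Non-vanishing of $\tilde\Delta(1,a,3)$ for $a\in J_*$ is inherited from the corresponding bounds of \cite{DS}, and a neighborhood argument as before then produces $\mathring{I}_\delta'\ni 3$ on which $\det F_R\neq 0$ on the strip. Intersecting $\mathring{I}_1$, $\mathring{I}_\delta$, and $\mathring{I}_\delta'$ gives the desired $\mathring{I}$.

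The main obstacle is the boundary analysis at $y=1$: one must identify the correct combination of row and column rescalings so that, after extraction, the determinant has a finite nonzero limit as $y\to 1$ uniformly in $(a,p)$ near $J_*\times\{3\}$. This amounts to a careful bookkeeping of the leading expansions of $f_{R,3}$ and $f_{R,4}$, which is already implicit in the construction of the approximate right fundamental system in \cite{DS} and thus requires no fresh computer assistance.
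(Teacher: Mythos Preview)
Your treatment of $\det(F_L)\neq 0$ and $M(a,p)_1^3\neq 0$ matches the paper's argument. The gap is in your handling of $\det(F_R)$ near $y=1$: the claim that after suitable row/column rescaling the determinant ``extends continuously to $y=1$'' is false. The oscillatory factors $\cos\varphi(y,a_*+a)$ and $\sin\varphi(y,a_*+a)$ (with $\varphi\to\infty$ as $y\to 1$) cannot be removed by scalar rescalings of real columns, since the real and imaginary parts of $f_{R,3},f_{R,4}$ mix $\cos\varphi$ and $\sin\varphi$. Consequently the rescaled determinant has no limit at $y=1$, and your ``$\tilde\Delta(1,a,3)\neq 0$ plus continuity'' scheme does not apply.

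The paper's proof avoids this by reusing the explicit decomposition from \cite{DS}, Definition 6.14: one writes $F_R=\tfrac{1}{a_*+a}P_0+(1-y)^{\tfrac{4}{p-1}-4}\big[(a_*+a)^{-2}\cos\varphi\,P_C+(a_*+a)^{-2}\sin\varphi\,P_S\big]$ with $P_0,P_C,P_S$ having polynomial entries. By multilinearity, $(a_*+a)^6(1-y)^{2(4-\tfrac{4}{p-1})}\det F_R$ becomes a quadratic form in $(\cos\varphi,\sin\varphi)$ with coefficients $P_{d,R},Q_2,Q_3$ that extend continuously to $[0,1]\times J_*\times I$. The quantitative bounds $P_{d,R}\geq 8$, $|Q_2|\leq 1$, $|Q_3-P_{d,R}|\leq 1$ at $p=3$ (from \cite{DS}, Lemma 6.19) show the non-oscillatory part dominates the oscillating remainder uniformly on all of $[0,1]\times J_*$, so no boundary-layer splitting is needed; continuity of the polynomial coefficients in $p$ then gives the neighborhood $I$. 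Your argument can be repaired along these lines, but the essential step is recognizing that one must bound a trigonometric combination uniformly rather than pass to a limit at $y=1$.
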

\begin{proof}
    By \cite{DS}, Lemma 6.8, $\det(F_L(y))\neq 0$ for all $y\in(-1,0)$. Analogously to Definition 6.14 in \cite{DS}, we factor out the oscillation and the singularity at $1$, by decomposing
    $$F_R(y,a,p)=\frac{P_0(y,a,p)}{a_*+a}+(1-y)^{\tfrac{4}{p-1}-4}\frac{\cos(\varphi(y,a_*+a))}{(a_*+a)^2}P_C(y,a)+(1-y)^{\tfrac{4}{p-1}-4}\frac{\sin(\varphi(y,a_*+a))}{(a_*+a)^2}P_S(y,a),$$
    where $P_C,P_S$ are given in Definition 6.14 in \cite{DS} and they are of the form
    $$\begin{pmatrix}
        0&0&(P_X)^1_3&(P_X)^1_4\\
        0&0&(P_X)^2_3&(P_X)^2_4\\
        0&0&(P_X)^3_3&(P_X)^3_4\\
        0&0&(P_X)^4_3&(P_X)^4_4
    \end{pmatrix},$$
    with polynomial entries $(P_X)^j_k$ for $X\in\lbrace C,S\rbrace$. We set
    $$P_0(y,a,p):=\begin{pmatrix}(a_*+a)\Re f_{R,1}(y,a,p)&(a_*+a)\Re f_{R,2}(y,a,p)&0&0\\
(a_*+a)\Im f_{R,1}(y,a,p)&(a_*+a)\Im f_{R,2}(y,a,p)&0&0\\
(a_*+a)\Re f'_{R,1}(y,a,p)&(a_*+a)\Re f'_{R,2}(y,a,p)&0&0\\
(a_*+a)\Im f'_{R,1}(y,a,p)&(a_*+a)\Im f'_{R,2}(y,a,p)&0&0
\end{pmatrix},$$
such that by multilinearity
\begin{align*}
    (a_*+a)^6(1-y)^{2\left(4-\tfrac{4}{p-1}\right)}\det(F_R(y,a,p))=&\cos(\varphi(y,a_*+a))^2P_{d,R}(y,a,p)\\&+\cos(\varphi(y,a_*+a))\sin(\varphi(y,a_*+a))Q_2(y,a,p)\\&+\sin(\varphi(y,a_*+a))^2Q_3(y,a,p)\\=&P_{d,R}(y,a,p)+\cos(\varphi(y,a_*+a))\sin(\varphi(y,a_*+a))Q_2(y,a,p)\\&+\sin(\varphi(y,a_*+a))^2\left[Q_3(y,a,p)-P_{d,R}(y,a,p)\right],
\end{align*}
where
$$P_{d,R}=\det(P_0+P_C),\hspace{0.5cm}Q_2=\frac{\det(P_0+P_C+P_S)-\det(P_0+P_C-P_S)}{2},\hspace{0.5cm}Q_3=\det(P_0+P_S).$$
Since all functions that appear in $P_0$, $P_C$, and $P_S$ are polynomial in $y$, the functions $P_{d,R}$, $Q_2$, and $Q_3$ extend continuously to $[0,1]\times J_*\times I$. By Lemma 6.19 in \cite{DS}, $$\min_{y\in[0,1],a\in J_*}P_{d,R}(y,a,3)\geq 8,$$ $$\sup_{y\in[0,1],a\in J_*}|Q_2(y,a,3)|\leq 1,\hspace{1cm}\sup_{y\in[0,1],a\in J_*}|Q_3(y,a,3)-P_{d,R}(y,a,3)|\leq 1,$$
and hence, by continuity, $\det(F_R(y,a,p))\neq 0$ for all $(y,a,p)\in(0,1)\times J_*\times I$, provided $I$ is chosen sufficiently small.

From the proof of Lemma 6.22 in \cite{DS} we have $M(a,3)_1^3=\left[F_L^{-1}(0)F_R(0,a,3\right]_1^3\leq -\frac{1}{5}$, and hence, by the continuity of $(a,p)\mapsto F_R(0,a,p):J_*\times I\rightarrow\mathbb{R}$, $M(a,p)_1^3< 0$ for all $(a,p)\in J_*\times I$ with $I$ sufficiently small around $3$.
\end{proof}
\noindent From now on, let $I\subset(\tfrac{7}{3},5)$ be a compact interval as in Lemma \ref{admfundmlem}.
\begin{lem}\label{asymFRinvlem}
Let $(a,p)\in J_*\times I$. Then there exists a $\delta\in(0,1]$ such that
$$F^{-1}(y,a,p)=\begin{pmatrix}
    \phi\left(\frac{f_{3}'(y,a,p)}{W(y,a,p)}\right)&-\phi\left(\frac{f_{3}(y,a,p)}{W(y,a,p)}\right)\\-\phi\left(\frac{f_{1}'(y,a,p)}{W(y,a,p)}\right)&\phi\left(\frac{f_{1}(y,a,p)}{W(y,a,p)}\right)
\end{pmatrix}\left[1+O((1-y)^2a^0p^0)\right]$$
for all $(y,a,p)\in [1-\delta,1)\times J_*\times I$, where
\begin{align*}
    W(y,a,p):&=f_{1}(y,a,p)f_{3}'(y,a,p)-f_{1}'(y,a,p)f_{3}(y,a,p)\\
    &=-4i(a_*+a)e^{i\varphi(y,a_*+a)}(1-y)^{-\tfrac{4}{p-1}}\left[1+\left(1+\tfrac{3-p}{2(p-1)}\right)(1-y)+O((1-y)^2a^0p^0)\right]
\end{align*}
and $\phi:\mathbb{C}\rightarrow\mathbb{R}^{2\times 2}$ is defined by
$$\phi(z):=\begin{pmatrix}\Re z&-\Im z\\
\Im z&\Re z
\end{pmatrix}.$$
\end{lem}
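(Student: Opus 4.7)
Since $y\in[1-\delta,1)$ with $\delta\in(0,1]$ to be chosen, the piecewise definition of $F$ gives $F=F_R$ and $f_j=f_{R,j}$ throughout, so I plan to work with $F_R$ only. The structural observation driving the proof is that $f_{R,1},\ldots,f_{R,4}$ were constructed so that the discrepancies $g:=f_{R,2}-if_{R,1}$ and $h:=f_{R,4}-if_{R,3}$ vanish at $y=1$ to high order: direct inspection of the defining formulas yields $g(y)=(1-y)^2(P_{R,2}(y)-iP_{R,1}(y))$ and $h(y)=ie^{i\varphi(y,\alpha)}(1-y)^{\mu+2}(P_{R,4}(y)-P_{R,3}(y))+ie^{-i\varphi(y,\alpha)}(1-y)^{\mu+4}(Q_{R,4}(y)-Q_{R,3}(y))$, where $\alpha:=a_*+a$ and $\mu:=3-4/(p-1)$. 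Using the ring homomorphism property $\phi(z_1z_2)=\phi(z_1)\phi(z_2)$, I will decompose $F_R=\tilde{F}+E$, where $\tilde{F}:=\begin{pmatrix}\phi(f_1)&\phi(f_3)\\ \phi(f_1')&\phi(f_3')\end{pmatrix}$ inherits the full $\mathbb{C}$-linear block structure and $E$ has non-zero entries only in columns two and four, equal respectively to the real/imaginary representations of $(g,g')$ and $(h,h')$.

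Next I will invert $\tilde{F}$ directly: viewed as a $2\times 2$ matrix over the image of $\phi$, its complex determinant is exactly $W=f_1f_3'-f_1'f_3$, and $\tilde{F}^{-1}=\begin{pmatrix}\phi(f_3'/W)&-\phi(f_3/W)\\ -\phi(f_1'/W)&\phi(f_1/W)\end{pmatrix}$---precisely the block form in the statement. To obtain the asymptotic expansion of $W$, I insert the Taylor expansions $f_1=1+c(1-y)+O((1-y)^2)$ with $c=(\alpha+i)/(2\alpha)+(3-p)/(2(p-1))$, $f_3=e^{i\varphi}(1-y)^\mu[1+\beta(1-y)]+O((1-y)^{\mu+2})$ with $\beta=1-i/(2\alpha)$, and use the explicit formula for $\varphi$ to get $i\varphi'(1-y)^\mu=-4i\alpha(1-y)^{\mu-3}+O((1-y)^{\mu-2})$. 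Differentiating termwise yields $f_3'=-4i\alpha e^{i\varphi}(1-y)^{\mu-3}[1+(\tfrac12-\tfrac{i}{2\alpha})(1-y)+O((1-y)^2)]$; since $f_1'f_3=O((1-y)^\mu)$ is of relative order $(1-y)^3$ compared to $f_1f_3'=O((1-y)^{\mu-3})$, multiplying out and using the identity $c+\tfrac12-\tfrac{i}{2\alpha}=1+\tfrac{3-p}{2(p-1)}$ produces the stated expansion for $W$.

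For $F_R^{-1}$, my plan is to write $F_R=\tilde{F}(I+\tilde{F}^{-1}E)$ and apply a Neumann-series argument; the crux is to show that $\tilde{F}^{-1}E$ is entrywise $O((1-y)^2)$ uniformly for $(a,p)\in J_*\times I$. Reading off from the Wronskian expansion, $\tilde{F}^{-1}$ has block-wise sizes $O(1)$ in the top-left, $O((1-y)^3)$ in the top-right, and $O((1-y)^{4/(p-1)})$ in both bottom blocks. The non-zero columns of $E$ satisfy $|g|=O((1-y)^2)$, $|g'|=O(1-y)$, $|h|=O((1-y)^{\mu+2})$, and $|h'|=O((1-y)^{\mu-1})$---the last coming from the dominant $i\varphi'$ factor when differentiating $h$. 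A position-by-position computation then shows each potentially dangerous product in $\tilde{F}^{-1}E$ has exponent at least $2$: the key identity $\mu-1+4/(p-1)=2$ makes the combined exponent equal exactly $2$ in the bottom-rows-times-column-four cross-terms, while the remaining contributions give $(1-y)^{1+4/(p-1)}$ or $(1-y)^{\mu+2}$, both strictly $>2$ for $p\in(7/3,5)$. Continuity in $(a,p)$ and compactness of $J_*\times I$ promote all implicit constants to uniform ones; choosing $\delta$ so small that $\|\tilde{F}^{-1}E\|_\infty<1/2$ throughout $[1-\delta,1)\times J_*\times I$, the Neumann series converges uniformly and gives $F_R^{-1}=(I+\tilde{F}^{-1}E)^{-1}\tilde{F}^{-1}=[I+O((1-y)^2)]\tilde{F}^{-1}$, which is the claimed representation.

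The main obstacle will be the position-by-position bookkeeping above: the exponents governing the sizes of the entries of $\tilde{F}^{-1}$ and of the nontrivial columns of $E$ depend on $p$ through $\mu=3-4/(p-1)$, and in each of the eight nontrivial entries of $\tilde{F}^{-1}E$ the combined exponent must be verified to be at least $2$ uniformly for $p$ in a compact neighborhood of $3$. The delicate cases---the ones in which the combined exponent equals exactly $2$ rather than being strictly greater---rely on the precise structure of the approximations $f_{R,j}$ near $y=1$, and the uniform lower bound degrades at the endpoints $p=7/3$ and $p=5$, which is ultimately what constrains how large $I$ can be.
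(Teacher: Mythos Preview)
Your proposal is correct and follows essentially the same route as the paper: both arguments rest on the observations $f_{R,2}-if_{R,1}=O((1-y)^2)$ and $f_{R,4}-if_{R,3}=O((1-y)^{\mu+2})$, decompose $F_R=\tilde F+E$ with $\tilde F$ the $\phi$-block matrix, compute the Wronskian expansion from the explicit asymptotics of $f_1$ and $f_3'$, and show $\tilde F^{-1}E=O((1-y)^2)$ entrywise so that $F_R^{-1}=[I+O((1-y)^2)]\tilde F^{-1}$. Your exponent bookkeeping (in particular the identification of $\mu-1+4/(p-1)=2$ as the critical case) is a bit more explicit than the paper's, but the structure is identical.
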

\begin{proof}
    Let $y\in [0,1)$ and $(a,p)\in J_*\times I$. For the Wronskian note that
    $f_1'(y,a,p)=O(y^0a^0p^0)$ and
    \begin{align*}
        f_3'(y,a,p)=&i\varphi'(y,a_*+a)e^{i\varphi(y,a_*+a)}(1-y)^{3-\tfrac{4}{p-1}}\left[1+\frac{2(a_*+a)-i}{2(a_*+a)}(1-y)+O((1-y)^2a^0p^0)\right]\\
        &+O((1-y)^{2-\tfrac{4}{p-1}}a^0p^0).
    \end{align*}
    Thus, since
    $$\varphi'(y,a_*+a)=-\frac{4(a_*+a)}{(1-y)^3}\left[1-\tfrac{1}{2}(1-y)+O((1-y)^2a^0)\right],$$
    we obtain
    \begin{align*}
        f_3'(y,a,p)=&-4i(a_*+a)e^{i\varphi(y,a_*+a)}(1-y)^{-\tfrac{4}{p-1}}\left[1+\frac{2(a_*+a)-i}{2(a_*+a)}(1-y)+O((1-y)^2a^0p^0)\right]\\
        &\times \left[1-\tfrac{1}{2}(1-y)+O((1-y)^2a^0)\right]+O((1-y)^{2-\tfrac{4}{p-1}}a^0p^0)\\
        =&-4i(a_*+a)e^{i\varphi(y,a_*+a)}(1-y)^{-\tfrac{4}{p-1}}\left[1+\frac{a_*+a-i}{2(a_*+a)}(1-y)+O((1-y)^2a^0p^0)\right].
    \end{align*}
    Consequently,
    \begin{align*}
        W(y,a,p)=&f_1(y,a,p)f_3'(y,a,p)+O\left((1-y)^{3-\tfrac{4}{p-1}}a^0p^0\right)\\
        =&-4i(a_*+a)e^{i\varphi(y,a_*+a)}(1-y)^{-\tfrac{4}{p-1}}\left[1+\frac{a_*+a-i}{2(a_*+a)}(1-y)+O((1-y)^2a^0p^0)\right]\\
        &\times\left[1+\left(\frac{a_*+a+i}{2(a_*+a)}+\frac{3-p}{2(p-1)}\right)(1-y)+O((1-y)^2)\right]\\
        =&-4i(a_*+a)e^{i\varphi(y,a_*+a)}(1-y)^{-\tfrac{4}{p-1}}\left[1+\left(1+\tfrac{3-p}{2(p-1)}\right)(1-y)+O((1-y)^2a^0p^0)\right].
    \end{align*}
    Moreover, it follows that there exists a $\delta\in(0,1]$ such that $W(y,a,p)\neq0$ for all $(y,a,p)\in[1-\delta,1)\times J_*\times I$.
    
    Now observe that
    \begin{align*}
        f_2(y,a,p)=&if_1(y,a,p)+O((1-y)^2),\\
        f_2'(y,a,p)=&if_1'(y,a,p)+O((1-y)),\\
        f_4(y,a,p)=&if_3(y,a,p)+O\left((1-y)^{5-\tfrac{4}{p-1}}a^0p^0\right),\\
        f_4'(y,a,p)=&if_3'(y,a,p)+O\left((1-y)^{2-\tfrac{4}{p-1}}a^0p^0\right)
    \end{align*}
    and thus,
    $$F(y,a,p)=\begin{pmatrix}
        \phi(f_1(y,a,p))&\phi(f_3(y,a,p))\\\phi(f_1'(y,a,p))&\phi(f_3'(y,a,p))
    \end{pmatrix}+\begin{pmatrix}
        0&O((1-y)^2)&0&O\left((1-y)^{5-\tfrac{4}{p-1}}a^0p^0\right)\\
        0&O((1-y)^2)&0&O\left((1-y)^{5-\tfrac{4}{p-1}}a^0p^0\right)\\
        0&O((1-y))&0&O\left((1-y)^{2-\tfrac{4}{p-1}}a^0p^0\right)\\
        0&O((1-y))&0&O\left((1-y)^{2-\tfrac{4}{p-1}}a^0p^0\right)
    \end{pmatrix}.$$
    Since $\phi(z_1z_2)=\phi(z_1)\phi(z_2)$ for all $z_1,z_2\in\mathbb{C}$, the first matrix can be formally inverted like a $2\times 2$-matrix and we obtain
    \begin{align*}
        F(y,a,p)=&\begin{pmatrix}
        \phi(f_1(y,a,p))&\phi(f_3(y,a,p))\\\phi(f_1'(y,a,p))&\phi(f_3'(y,a,p))
    \end{pmatrix}\\
    &\times\left[1+\begin{pmatrix}
        \phi\left(\frac{f_3'(y,a,p)}{W(y,a,p)}\right)&-\phi\left(\frac{f_3(y,a,p)}{W(y,a,p)}\right)\\-\phi\left(\frac{f_1'(y,a,p)}{W(y,a,p)}\right)&\phi\left(\frac{f_1(y,a,p)}{W(y,a,p)}\right)
    \end{pmatrix}\begin{pmatrix}
        0&O((1-y)^2)&0&O\left((1-y)^{5-\tfrac{4}{p-1}}a^0p^0\right)\\
        0&O((1-y)^2)&0&O\left((1-y)^{5-\tfrac{4}{p-1}}a^0p^0\right)\\
        0&O((1-y))&0&O\left((1-y)^{2-\tfrac{4}{p-1}}a^0p^0\right)\\
        0&O((1-y))&0&O\left((1-y)^{2-\tfrac{4}{p-1}}a^0p^0\right)
    \end{pmatrix}\right]\\
    =&\begin{pmatrix}
        \phi(f_1(y,a,p))&\phi(f_3(y,a,p))\\\phi(f_1'(y,a,p))&\phi(f_3'(y,a,p))
    \end{pmatrix}\left[1+O((1-y)^2a^0p^0)\right]
    \end{align*}
    for all $(y,a,p)\in[1-\delta,1)\times J_*\times I$, which yields the stated form of $F^{-1}$.
\end{proof}
\begin{defin}\label{coeffdef}
    The functions $(p_1,p_2,q_1,q_2)$ defined on $(-1,1)\setminus\lbrace 0\rbrace\times J_*\times I$ by
        \begin{align*}
        p_1&=\frac{A_3^3+A_4^4}{2}+i\frac{A_3^4-A_4^3}{2},&&p_2=\frac{A_3^3-A_4^4}{2}+i\frac{A^4_3+A_4^3}{2},\\
        q_1&=\frac{A^3_1+A^4_2}{2}+i\frac{A_1^4-A_2^3}{2},&&q_2=\frac{A_1^3-A_2^4}{2}+i\frac{A^4_1+A^3_2}{2},
    \end{align*}
    where $A(y,a,p):=-F'(y,a,p)F^{-1}(y,a,p)$, are called the \textit{coefficients associated to $F$}.
\end{defin}

\begin{prop}\label{coeffFprop}
    We have $f_j(\cdot,a,p)\in C^2((-1,1)\setminus\lbrace 0\rbrace)\cap C^1(-1,1)$ for all $(a,p)\in J_*\times I$, $j\in\lbrace1,2,3,4\rbrace$ and $$f_j''(y,a,p)+p_1(y,a,p)f_j'(y,a,p)+p_2(y,a,p)\overline{f_j'(y,a,p)}+q_1(y,a,p)f_j(y,a,p)+q_2(y,a,p)\overline{f_j(y,a,p)}=0$$
    for all $j\in\lbrace 1,2,3,4\rbrace$, $(y,a,p)\in(-1,1)\setminus\lbrace 0\rbrace\times J_*\times I$, where $(p_1,p_2,q_1,q_2)$ are the coefficients associated to $F$. Furthermore,
    \begin{align*}
        p_1(y,a,p)&=\frac{4i(a_*+a)}{(1-y)^3}-\frac{2i(a_*+a)}{(1-y)^2}+\frac{2}{1+y}+O((1-y)^{-1}a^0p^0),\\
        p_2(y,a,p)&=O((1-y)^{-1}a^0p^0),\\
        q_1(y,a,p)&=\frac{\frac{4i(a_*+a)}{p-1}-2}{(1-y)^3}+O((1-y)^{-2}a^0p^0)+O((1+y)^{-1}),\\
        q_2(y,a,p)&=O((1-y)^{-1}a^0p^0)+O((1+y)^{-1}).
    \end{align*}
\end{prop}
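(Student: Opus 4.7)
I would prove the proposition in three stages: regularity of $f_j$, verification of the $\mathbb{R}$-linear ODE, and the asymptotic expansions of $p_1,p_2,q_1,q_2$.

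\emph{Regularity.} The functions $f_{L,j}$ are polynomials divided by $(1+y)^m$ with $m\in\{0,1\}$ and hence smooth on $(-1,0]$, while the $f_{R,j}$ are built from polynomials, the factors $(1-y)^{3-\tfrac{4}{p-1}}$ and $(1-y)^{7-\tfrac{4}{p-1}}$ (smooth on $[0,1)$ since $p>\tfrac{7}{3}$ makes both exponents positive), and the smooth-on-$[0,1)$ oscillations $e^{\pm i\varphi(y,a_*+a)}$. Thus $F$ is $C^\infty$ on $(-1,0)\cup(0,1)$ and, by Lemma~\ref{admfundmlem}, invertible there. At the junction $y=0$, the definition $M(a,p):=F_L^{-1}(0)F_R(0,a,p)$ ensures $F_L(0)M(a,p)=F_R(0,a,p)$; since the bottom two rows of $F$ encode $\Re f_j',\Im f_j'$, this matching forces continuity of both $f_j$ and $f_j'$ at $0$, giving $f_j(\cdot,a,p)\in C^1(-1,1)\cap C^2((-1,1)\setminus\{0\})$ for each $j$.

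\emph{The ODE.} On $(-1,0)\cup(0,1)$ the identity $F'=-AF$ splits into four scalar equations per column. The top two are identically satisfied because rows $1,2$ of $F'$ coincide with rows $3,4$ of $F$; by linear independence of the columns of $F$, this forces $A^1=(0,0,-1,0)$ and $A^2=(0,0,0,-1)$. The bottom two read
\begin{align*}
\Re f_j''&=-A^3_1\Re f_j-A^3_2\Im f_j-A^3_3\Re f_j'-A^3_4\Im f_j',\\
\Im f_j''&=-A^4_1\Re f_j-A^4_2\Im f_j-A^4_3\Re f_j'-A^4_4\Im f_j'.
\end{align*}
Combining these via $f_j''=\Re f_j''+i\,\Im f_j''$ and expressing $\Re f_j,\Im f_j$ in terms of $f_j,\overline{f_j}$, one checks that the coefficient of $f_j$ comes out as $-[(A^3_1+A^4_2)/2+i(A^4_1-A^3_2)/2]=-q_1$, and analogously $-q_2$ for $\overline{f_j}$, $-p_1$ for $f_j'$, and $-p_2$ for $\overline{f_j'}$ by Definition~\ref{coeffdef}. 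This is precisely the stated ODE.

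\emph{Asymptotics.} This is the main technical step. Near $y=1$ I would plug the form of $F^{-1}$ from Lemma~\ref{asymFRinvlem} into $A=-F'F^{-1}$ and use that $\phi$ is a ring homomorphism to reduce the $2\times 2$ block multiplications to complex arithmetic on the scalars $f_1,f_3,f_1',f_3',f_1'',f_3''$ and the Wronskian $W$. The key observation is that if $f_1,f_3$ were an exact $\mathbb{C}$-linear fundamental pair for an operator $\partial^2+\tilde p\,\partial+\tilde q$, then $\tilde p=-W'/W$ and $\tilde q=-(f_1''f_3'-f_3''f_1')/W$, and the lower half of $-F'F^{-1}$ would reduce in block form to $\bigl(\phi(\tilde q)\ \ \phi(\tilde p)\bigr)$; Definition~\ref{coeffdef} would then give $p_1=\tilde p$, $q_1=\tilde q$, $p_2=q_2=0$. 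In our setting $f_1,f_3$ only approximately satisfy such an equation, so this identification holds modulo the $O((1-y)^2)$ correction from Lemma~\ref{asymFRinvlem}, which is precisely what produces the $O((1-y)^{-1})$ bounds on $p_2,q_2$ near $y=1$. The explicit leading expansions of $\tilde p,\tilde q$ then follow from $W'=f_1 f_3''-f_1''f_3$, the expansion of $W$ in Lemma~\ref{asymFRinvlem}, and the explicit expansions of $f_{R,1},f_{R,3}$ and their derivatives together with $\varphi'(y,a_*+a)=-\tfrac{4(a_*+a)}{(1-y)^3}\bigl[1-\tfrac{1}{2}(1-y)+O((1-y)^2)\bigr]$; one recovers the leading terms of $p_0(\cdot,a_*+a,p)$ and $q_0(\cdot,a_*+a,p)$, matching the order to which the approximate fundamental system was engineered. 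The $\tfrac{2}{1+y}$ term in $p_1$ and the $O((1+y)^{-1})$ remainders in $q_1,q_2$ reflect the corresponding $1/(1+y)$ singularities present in $p_0,q_0$, which the construction of $F_L$ together with the matching via $M(a,p)$ reproduces. The main obstacle is the careful bookkeeping of powers of $(1-y)$ and $(1+y)$ through the cascade of expansions and the verification that all $O$-constants are uniform in $(a,p)\in J_*\times I$.
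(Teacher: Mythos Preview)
Your proposal is correct and follows essentially the same route as the paper: block-$\phi$ structure via Lemma~\ref{asymFRinvlem}, identification of $p_1,q_1$ with the complex coefficients $h_2=-W'/W$ and $h_1=(f_1'f_3''-f_1''f_3')/W$ (your $\tilde p,\tilde q$), and the $O((1-y)^2)$ correction producing the $O((1-y)^{-1})$ bounds on $p_2,q_2$.

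One simplification you are missing on the left: on $(-1,0)$ the matching matrix $M(a,p)$ drops out entirely, since
\[
A=-F'F^{-1}=-F_L'M(a,p)\,M(a,p)^{-1}F_L^{-1}=-F_L'F_L^{-1},
\]
so the coefficients $p_1,p_2,q_1,q_2$ are independent of $(a,p)$ there and the $\tfrac{2}{1+y}$ and $O((1+y)^{-1})$ asymptotics reduce verbatim to the $p=3$ case already established in \cite{DS}. Your phrasing ``the construction of $F_L$ together with the matching via $M(a,p)$ reproduces'' suggests more work than is needed.
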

\begin{proof}
Let $(a,p)\in J_*\times I$. We have $F_L\in C^1((-1,0),\mathbb{R}^{4\times 4})$, $F_R(\cdot,a,p)\in C^1((0,1),\mathbb{R}^{4\times 4})$ and consequently, $f_j(\cdot,a,p)\in C^2((-1,1)\setminus\lbrace 0\rbrace)\cap C^1(-1,1)$ for all $j\in\lbrace 1,2,3,4\rbrace$. By Definition \ref{coeffdef}, we have
\begin{equation}\label{FAeq}
F'(y,a,p)+A(y,a,p)F(y,a,p)=0    
\end{equation}
\noindent and since $F'(y,a,p)^j=F(y,a,p)^{j+2}$ for $j\in\lbrace 1,2\rbrace$, it follows that
$$A(y,a,p)_k^j=-\sum_{l=1}^4F'(y,a,p)_l^jF^{-1}(y,a,p)_k^l=-\sum_{l=1}^4F(y,a,p)_l^{j+2}F^{-1}(y,a,p)_k^l=-\delta^{j+2}_k$$
for $j\in\lbrace 1,2\rbrace$. Thus, $A$ is of the form
$$A=\begin{pmatrix}
    0&0&-1&0\\
    0&0&0&-1\\
    A^3_1&A^3_2&A^3_3&A^3_4\\
    A^4_1&A^4_2&A^4_3&A^4_4
\end{pmatrix}.$$
Consequently, Eq.~(\ref{FAeq}) is equivalent to
$$f_j''(y,a,p)+p_1(y,a,p)f_j'(y,a,p)+p_2(y,a,p)\overline{f_j'(y,a,p)}+q_1(y,a,p)f_j(y,a,p)+q_2(y,a,p)\overline{f_j(y,a,p)}=0$$
    for all $j\in\lbrace 1,2,3,4\rbrace$.
    
    In the case $y\in (0,1)$, we have by definition
    \begin{align*}
        f_2(y,a,p)=&if_1(y,a,p)+O((1-y)^2),\\
        f_2'(y,a,p)=&if_1'(y,a,p)+O((1-y)),\\
        f_2''(y,a,p)=&if_1''(y,a,p)+O(y^0),
    \end{align*}
    as well as
    \begin{align*}
        f_4(y,a,p)=&if_3(y,a,p)+O\left((1-y)^{5-\tfrac{4}{p-1}}a^0p^0\right),\\
        f_4'(y,a,p)=&if_3'(y,a,p)+O\left((1-y)^{2-\tfrac{4}{p-1}}a^0p^0\right),\\
        f_4''(y,a,p=&if_3''(y,a,p)+O\left((1-y)^{-1-\tfrac{4}{p-1}}a^0p^0\right).
    \end{align*}
    Consequently,
    $$F'(y,a,p)=\begin{pmatrix}
        \phi(f_1'(y,a,p))&\phi(f_3'(y,a,p))\\
        \phi(f_1''(y,a,p))&\phi(f_3''(y,a,p))
    \end{pmatrix}+\begin{pmatrix}
        0&O((1-y))&0&O\left((1-y)^{2-\tfrac{4}{p-1}}a^0p^0\right)\\
        0&O((1-y))&0&O\left((1-y)^{2-\tfrac{4}{p-1}}a^0p^0\right)\\
        0&O(y^0)&0&O\left((1-y)^{-1-\tfrac{4}{p-1}}a^0p^0\right)\\
        0&O(y^0)&0&O\left((1-y)^{-1-\tfrac{4}{p-1}}a^0p^0\right)
    \end{pmatrix}$$
    and by Lemma \ref{asymFRinvlem},
    \begin{align*}
        A(y,a,p)=&-F'(y,a,p)F^{-1}(y,a,p)\\
        =&F'(y,a,p)\begin{pmatrix}
    -\phi\left(\frac{f_{3}'(y,a,p)}{W(y,a,p)}\right)&\phi\left(\frac{f_{3}(y,a,p)}{W(y,a,p)}\right)\\ \phi\left(\frac{f_{1}'(y,a,p)}{W(y,a,p)}\right)&-\phi\left(\frac{f_{1}(y,a,p)}{W(y,a,p)}\right)
\end{pmatrix}\left[1+O((1-y)^2a^0p^0)\right]\\
=&\begin{pmatrix}
    \phi(0)&-\phi(1)\\
    \phi\left(\frac{f_1'(y,a,p)f_3''(y,a,p)-f_1''(y,a,p)f_3'(y,a,p)}{W(y,a,p)}\right)&-\phi\left(\frac{f_1(y,a,p)f_3''(y,a,p)-f_1''(y,a,p)f_3(y,a,p)}{W(y,a,p)}\right)
\end{pmatrix}\\ &\times\left[1+O((1-y)^2a^0p^0)\right]\\
&+\begin{pmatrix}
    O((1-y)a^0p^0)&O((1-y)a^0p^0)&O((1-y)^2a^0p^0)&O((1-y)^2a^0p^0)\\
    O((1-y)a^0p^0)&O((1-y)a^0p^0)&O((1-y)^2a^0p^0)&O((1-y)^2a^0p^0)\\
    O((1-y)^{-1}a^0p^0)&O((1-y)^{-1}a^0p^0)&O((1-y)^{-1}a^0p^0)&O((1-y)^{-1}a^0p^0)\\
    O((1-y)^{-1}a^0p^0)&O((1-y)^{-1}a^0p^0)&O((1-y)^{-1}a^0p^0)&O((1-y)^{-1}a^0p^0)
\end{pmatrix}.
    \end{align*}
In other words,
\begin{align*}
        A(y,a,p)=&\begin{pmatrix}
    0&0&-1&0\\0&0&0&-1\\
    \textrm{Re }h_1(y,a,p)&-\textrm{Im }h_1(y,a,p)&\textrm{Re }h_2(y,a,p)&-\textrm{Im }h_2(y,a,p)\\
    \textrm{Im }h_1(y,a,p)&\textrm{Re }h_1(y,a,p)&\textrm{Im }h_2(y,a,p)&\textrm{Re }h_2(y,a,p)
\end{pmatrix}\left[1+O((1-y)^2a^0p^0)\right]\\
&+\begin{pmatrix}
    0&0&0&0\\
    0&0&0&0\\
    O((1-y)^{-1}a^0p^0)&O((1-y)^{-1}a^0p^0)&O((1-y)^{-1}a^0p^0)&O((1-y)^{-1}a^0p^0)\\
    O((1-y)^{-1}a^0p^0)&O((1-y)^{-1}a^0p^0)&O((1-y)^{-1}a^0p^0)&O((1-y)^{-1}a^0p^0)
\end{pmatrix},
    \end{align*}
where
    \begin{align*}
        h_1(y,a,p)&:=\frac{f_1'(y,a,p)f_3''(y,a,p)-f_1''(y,a,p)f_3'(y,a,p)}{W(y,a,p)},\\
        h_2(y,a,p)&:=-\frac{f_1(y,a,p)f_3''(y,a,p)-f_1''(y,a,p)f_3(y,a,p)}{W(y,a,p)},
    \end{align*}
and hence,
    \begin{align*}
        p_1(y,a,p)&=h_2(y,a_*+a,p)\left[1+O((1-y)^2a^0p^0)\right]+O((1-y)^{-1}a^0p^0),\\
        p_2(y,a,p)&=h_2(y,a_*+a,p)O((1-y)^2a^0p^0)+O((1-y)^{-1}a^0p^0),\\
        q_1(y,a,p)&=h_1(y,a_*+a,p)\left[1+O((1-y)^2a^0p^0)\right]+O((1-y)^{-1}a^0p^0),\\
        q_2(y,a,p)&=h_1(y,a_*+a,p)O((1-y)^2a^0p^0)+O((1-y)^{-1}a^0p^0).
    \end{align*}
From Lemma \ref{asymFRinvlem} recall that
    $$W(y,a,p)=-4i(a_*+a)e^{i\varphi(y,a_*+a)}(1-y)^{-\tfrac{4}{p-1}}\left[1+\left(1+\tfrac{3-p}{2(p-1)}\right)(1-y)+O((1-y)^2a^0p^0)\right]$$
    and by definition we have $f_3'(y,a,p)=O((1-y)^{-\frac{4}{p-1}}a^0p^0)$ and
\begin{align*}
    f_3''(y,a,p)=&-\varphi'(y,a_*+a)^2e^{i\varphi(y,a_*+a)}(1-y)^{3-\tfrac{4}{p-1}}\left[1+\frac{2(a_*+a)-i}{2(a_*+a)}(1-y)+O((1-y)^{2}a^0p^0)\right]\\&+O\left((1-y)^{-1-\tfrac{4}{p-1}}a^0p^0\right)\\
    =&-16(a_*+a)^2e^{i\varphi(y,a_*+a)}(1-y)^{-3-\tfrac{4}{p-1}}\left[1-\frac{i}{2(a_*+a)}(1-y)+O((1-y)^2a^0p^0)\right],
\end{align*}
which yields
$$\frac{f_3''(y,a,p)}{W(y,a,p)}=-\frac{4i(a_*+a)}{(1-y)^3}\left[1-\left(\frac{2(a_*+a)+i}{2(a_*+a)}+\frac{3-p}{2(p-1)}\right)(1-y)+O((1-y)^2a^0p^0)\right].$$
Recalling that $f_1(y,a,p)=1+\left[\frac{a_*+a+i}{2(a_*+a)}+\frac{3-p}{2(p-1)}\right](1-y)+O((1-y)^2)$, we obtain
\begin{align*}
    h_1(y,a,p)&=f_1'(y,a,p)\frac{f_3''(y,a,p)}{W(y,a,p)}+O((1-y)^0a^0p^0)\\
    &=\left[-\left(\frac{a_*+a+i}{2(a_*+a)}+\frac{3-p}{2(p-1)}\right)+O(1-y)\right]\frac{f_3''(y,a,p)}{W(y,a,p)}+O((1-y)^0a^0p^0)\\
    &=\frac{\frac{4i(a_*+a)}{p-1}-2}{(1-y)^3}\left[1+O((1-y)a^0p^0)\right]
\end{align*}
    and
    \begin{align*}
        h_2(y,a,p)=&-f_1(y,a,p)\frac{f_3''(y,a,p)}{W(y,a,p)}+O((1-y)^3a^0p^0)\\
        =&\frac{4i(a_*+a)}{(1-y)^3}\left[1+\left(\frac{a_*+a+i}{2(a_*+a)}+\frac{3-p}{2(p-1)}\right)(1-y)+O((1-y)^2)\right]\\&\times\left[1-\left(\frac{2(a_*+a)+i}{2(a_*+a)}+\frac{3-p}{2(p-1)}\right)(1-y)+O((1-y)^2a^0p^0)\right]\\
        =&\frac{4i(a_*+a)}{(1-y)^3}\left[1-\frac{1}{2}(1-y)+O((1-y)^2a^0p^0)\right].
    \end{align*}
    This yields the claimed asymptotics for the coefficients on the right-hand side.
    
    In the case $y\in(-1,0)$, we have 
\begin{align*}
    A(y,a,p)=&-F'(y,a,p)F^{-1}(y,a,p)=-F_L'(y)F_L^{-1}(0)F_R(0,a,p)F_R^{-1}(0,a,p)F_L(0)F_L^{-1}(y)\\
    =&-F_L'(y)F_L^{-1}(y).
\end{align*}
In particular, the asymptotics for the coefficients on the left-hand side are independent of $a$ and $p$ and the proof of the stated asymptotics can be found in \cite{DS}, Proposition 3.21.
 \end{proof}
 
 \begin{defin}
     Let $\mathcal{I}\subset(-1,1)$ be open. Then, for $f\in C^2(\mathcal{I})$ and $(y,a,p)\in \mathcal{I}\times J_*\times I$, we set
     $$\tilde{\mathcal{L}}_{a,p}(f)(y):=f''(y)+p_1(y,a,p)f'(y)+p_2(y,a,p)\overline{f'(y)}+q_1(y,a,p)f(y)+q_2(y,a,p)\overline{f(y)}.$$
 \end{defin}
 \subsection{The inhomogeneous equation}
 \begin{defin}
     For $(y,a,p)\in(-1,1)\times J_*\times I$, $k\in\lbrace 1,2,3,4\rbrace$, and $g\in Y$, we set
     $$\alpha^k(a,p,g)(y):=F^{-1}(y,a,p)^k_3\textnormal{ Re }g(y)+F^{-1}(y,a,p)^k_4\textnormal{ Im }g(y).$$
 \end{defin}
 \begin{lem}\label{alphabdlem}
We have the bound
$$\left|\alpha^k(a,p,g)(y)\right|\lesssim\|g\|_Y$$
for $k\in\lbrace 1,2\rbrace$, and
$$\left|\alpha^k(a,p,g)(y)\right|\lesssim(1-y)^{\tfrac{4}{p-1}-2}\|g\|_Y$$
for $k\in\lbrace 3,4\rbrace$, for all $(a,p,g)\in J_*\times I\times Y$. In particular,
$$\int_\mathcal{I}\left|\alpha^k(a,p,g)(y)\right|dy\lesssim\|g\|_Y$$
for all $k\in\lbrace 1,2,3,4\rbrace$, $\mathcal{I}\subseteq(-1,1)$ and $(a,p,g)\in J_*\times I\times Y$.
 \end{lem}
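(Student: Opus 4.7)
The plan is to reduce the bounds on $\alpha^k(a,p,g)(y)$ to bounds on the entries $(F^{-1})^k_j(y,a,p)$ for $j\in\{3,4\}$, combined with the pointwise estimate $|g(y)|\leq (1+y)^{-1}(1-y)^{-2}\|g\|_Y$ that follows directly from the definition of $\|\cdot\|_Y$. I would split $(-1,1)$ into three zones, a neighborhood of $1$, a neighborhood of $-1$, and a compact middle part, and treat each separately.

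On the right zone $[1-\delta,1)$, Lemma \ref{asymFRinvlem} expresses $F^{-1}$ in terms of $f_1,f_3$ and the Wronskian $W$, so that columns $3,4$ of $F^{-1}$ are, up to a factor $1+O((1-y)^2)$, given by $-\phi(f_3/W)$ in the top rows and $\phi(f_1/W)$ in the bottom rows. The asymptotics $f_1=O(1)$, $f_3=O((1-y)^{3-\tfrac{4}{p-1}})$, and $W\asymp (1-y)^{-\tfrac{4}{p-1}}$, uniform in $(a,p)\in J_*\times I$, give $f_3/W=O((1-y)^3)$ and $f_1/W=O((1-y)^{\tfrac{4}{p-1}})$. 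Combined with $|g(y)|\lesssim (1-y)^{-2}\|g\|_Y$ near $y=1$, this yields $|\alpha^k(y)|\lesssim (1-y)\|g\|_Y$ for $k\in\{1,2\}$ and $|\alpha^k(y)|\lesssim (1-y)^{\tfrac{4}{p-1}-2}\|g\|_Y$ for $k\in\{3,4\}$, which are the claimed pointwise bounds in this zone.

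On the left zone $(-1,-1+\delta]$, the factorization $F(y,a,p)=F_L(y)M(a,p)$ together with uniform invertibility of $M$ on the compact set $J_*\times I$ (by continuity and Lemma \ref{admfundmlem}) reduces matters to the entries of $F_L^{-1}(y)$. Abel's identity with $\Re p_1(y,a,p)\sim 2/(1+y)$ as $y\to -1$ gives $|\det F_L(y)|\asymp (1+y)^{-4}$. A cofactor expansion exploiting $f_{L,j}=O(1)$, $f'_{L,j}=O(1)$ for $j\in\{1,2\}$ versus $f_{L,j}=O((1+y)^{-1})$, $f'_{L,j}=O((1+y)^{-2})$ for $j\in\{3,4\}$ then yields cofactors in columns $3,4$ of size at most $O((1+y)^{-3})$, so $(F_L^{-1})^k_j=O(1+y)$ for $j\in\{3,4\}$ and every $k$. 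Combined with $|g(y)|\lesssim (1+y)^{-1}\|g\|_Y$, this gives $|\alpha^k(y)|\lesssim\|g\|_Y$ for all $k$ here, which suffices for both pointwise claims because $(1-y)^{\tfrac{4}{p-1}-2}$ is bounded away from $0$ and $\infty$ on this zone.

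On the compact middle region $[-1+\delta,1-\delta]$, Lemma \ref{admfundmlem} guarantees $\det F\neq 0$, so $(F^{-1})^k_j$ is uniformly bounded by continuity and compactness of $J_*\times I$, and the pointwise bounds follow trivially. For the integrated statement, the case $k\in\{1,2\}$ is immediate since the pointwise bound is uniform, while for $k\in\{3,4\}$ the integral $\int_{-1}^{1}(1-y)^{\tfrac{4}{p-1}-2}\,dy$ converges precisely when $\tfrac{4}{p-1}-2>-1$, i.e., $p<5$, which is part of the standing hypothesis. The main obstacle is the combinatorial cofactor bookkeeping near $y=-1$; since the left-hand construction does not depend on $(a,p)$, this is a finite check on $4\times 4$ matrices that can be lifted directly from the cubic analysis in \cite{DS}.
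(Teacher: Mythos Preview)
Your approach matches the paper's: bound columns $3,4$ of $F^{-1}$ via Lemma~\ref{asymFRinvlem} near $y=1$ and via the factorization $F=F_LM$ together with the asymptotics of $F_L^{-1}$ (which the paper simply cites from \cite{DS}, Lemma~3.16, rather than redoing the cofactor computation) near $y=-1$; the paper splits at $0$ instead of into three zones, but this is cosmetic. One caution: the factor $[1+O((1-y)^2)]$ in Lemma~\ref{asymFRinvlem} is a \emph{matrix} acting on the right and mixes columns, so your sharper intermediate claim $|\alpha^k|\lesssim(1-y)\|g\|_Y$ for $k\in\{1,2\}$ does not follow from $f_3/W=O((1-y)^3)$ alone without tracking that structure---the paper uses only the cruder bound $(F^{-1})^k_j=O((1-y)^2)$ for these entries, which already yields the required $|\alpha^k|\lesssim\|g\|_Y$.
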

 \begin{proof}
 From Lemma \ref{asymFRinvlem} we have
 $$\left|F^{-1}(y,a,p)^k_3\right|+\left|F^{-1}(y,a,p)^k_4\right|\lesssim \begin{cases}
  (1-y)^2,  & k\in\lbrace 1,2\rbrace \\
  (1-y)^{\tfrac{4}{p-1}} ,  & k\in\lbrace 3,4\rbrace
\end{cases} $$
for all $(y,a,p)\in [0,1)\times J_*\times I$. Furthermore, if $y\in (-1,0]$,
$$F^{-1}(y,a,p)_l^k=\sum_{m=1}^4M^{-1}(a,p)_m^kF_L^{-1}(y)_l^m,$$
and from \cite{DS}, Lemma 3.16,
$$F_L^{-1}(y)=\begin{pmatrix}
    1&O(1+y)&1+y&O((1+y)^2)\\
    O(1+y)&1&O((1+y)^2)&1+y\\
    O((1+y)^2)&O((1+y)^2)&-(1+y)^2&O((1+y)^3)\\
    O((1+y)^2)&O((1+y)^2)&O((1+y)^3)&-(1+y)^2
\end{pmatrix}\left[1+O(1+y)\right],$$
which yields
     $$\left|F^{-1}(y,a,p)^k_3\right|+\left|F^{-1}(y,a,p)^k_4\right|\lesssim 1+y$$
     for all $(y,a,p)\in (-1,0]\times J_*\times I$.\\
 \end{proof}
\begin{defin}\label{LinvDef}
    For $(y,a,p)\in (-1,1)\times J_*\times I$ and $g\in Y$ we set
    \begin{align*}
        \tilde{\mathcal{L}}^{-1}_{a,p}(g)(y):=&\sum_{k=1}^2\left[F(y,a,p)_k^1+iF(y,a,p)_k^2\right]\int_{-1}^y\alpha^k(a,p,g)(x)dx\\
        &-\sum_{k=3}^4\left[F(y,a,p)_k^1+iF(y,a,p)_k^2\right]\int^{1}_y\alpha^k(a,p,g)(x)dx\\
        &+\left[F(y,a,p)_1^1+iF(y,a,p)_1^2\right]\sum_{k=3}^4\frac{M(a,p)_k^3}{M(a,p)_1^3}\int_{-1}^1\alpha^k(a,p,g)(x)dx.
    \end{align*}
\end{defin}
\begin{rem}
    The first two terms in the definition of $\tilde{\mathcal{L}}^{-1}_{a,p}$ are straightforward from the variation of constants formula. The last term cancels the contribution of the singularity at $-1$ from $f_{L,3}$ to $f_3$ and $f_4$. See \cite{DS}, Lemma 3.27, for the proof of the following Lemma.
\end{rem}
\begin{lem}\label{Linvlem}
    Let $g\in Y$. Then $\tilde{\mathcal{L}}^{-1}_{a,p}(g)\in C^2((-1,1)\setminus\lbrace 0\rbrace)\cap C^1(-1,1)$ and
    $$\tilde{\mathcal{L}}_{a,p}\left(\tilde{\mathcal{L}}^{-1}_{a,p}(g)\right)(y)=g(y)$$
    for all $(y,a,p)\in (-1,1)\times J_*\times I$.
\end{lem}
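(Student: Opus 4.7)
The plan is to verify Lemma \ref{Linvlem} by adapting the classical variation of constants formula to the $\mathbb{R}$-linear system encoded by $F$, as in \cite{DS}, Lemma 3.27. First, I address the regularity claim. By Lemma \ref{alphabdlem}, each $\alpha^k(a,p,g)$ is integrable over $(-1,1)$, so the three integrals appearing in Definition \ref{LinvDef} are finite. Since $g \in Y$ is continuous on $(-1,1)\setminus\{0\}$ and the entries of $F^{-1}(\cdot,a,p)$ are continuous on $(-1,1)$ (using that $F_L(0)$ and $F_R(0,a,p)$ are both invertible, matched by $M(a,p)$), the $\alpha^k(a,p,g)$ are continuous on $(-1,1)\setminus\{0\}$, and their antiderivatives are $C^1$ on $(-1,1)$. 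Combined with the fact that $F \in C^\infty((-1,1)\setminus\{0\}) \cap C^1(-1,1)$ by construction, the product expression defining $\tilde{\mathcal{L}}^{-1}_{a,p}(g)$ lies in $C^2((-1,1)\setminus\{0\}) \cap C^1(-1,1)$.

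Next, I reduce the ODE identity to a variation of constants computation. Write $\tilde{\mathcal{L}}^{-1}_{a,p}(g) = \sum_{k=1}^4 c_k(y) f_k(y,a,p) + C f_1(y,a,p)$ with $c_k(y) := \int_{-1}^y \alpha^k$ for $k \in \{1,2\}$, $c_k(y) := -\int_y^1 \alpha^k$ for $k \in \{3,4\}$, and $C$ the constant $\sum_{k=3}^4 \frac{M(a,p)_k^3}{M(a,p)_1^3}\int_{-1}^1 \alpha^k$. Since $\tilde{\mathcal{L}}_{a,p}$ is $\mathbb{R}$-linear and Proposition \ref{coeffFprop} shows $\tilde{\mathcal{L}}_{a,p}(f_j) = 0$ for each $j$, the last (constant multiple of $f_1$) term is annihilated by $\tilde{\mathcal{L}}_{a,p}$ and may be discarded for the purpose of the identity. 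In all cases $c_k'(y) = \alpha^k(a,p,g)(y)$, which by the definition of $\alpha^k$ is the $k$-th entry of the vector $F^{-1}(y,a,p)\,(0,0,\Re g(y), \Im g(y))^T$.

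Finally, I carry out the standard differentiation, keeping careful track of real and imaginary parts. Using the block structure $F_j = (\Re f_j, \Im f_j, \Re f_j', \Im f_j')^T$ and the identity $F c' = (0,0,\Re g,\Im g)^T$, the first two rows give $\sum_k c_k'(y)\Re f_k = \sum_k c_k'(y) \Im f_k = 0$, so that the first derivative of the sum $f := \sum_k c_k f_k$ equals $\sum_k c_k f_k'$; the last two rows give $\sum_k c_k'(y)\Re f_k' = \Re g(y)$ and $\sum_k c_k'(y)\Im f_k' = \Im g(y)$, yielding $f'' = g + \sum_k c_k f_k''$. Substituting into $\tilde{\mathcal{L}}_{a,p}$ and using $\tilde{\mathcal{L}}_{a,p}(f_k) = 0$ for each $k$, every term except $g$ cancels and the identity follows on $(-1,1)\setminus\{0\}$, extending to $y = 0$ by continuity of the coefficients and the $C^1$ matching of $F$ at $0$. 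The main obstacle is the bookkeeping required by the $\mathbb{R}$-linear framework: the coefficients $p_2, q_2$ couple $f'$ and $f$ to their complex conjugates, so the cancellation must be verified at the level of the four-dimensional real system rather than the more familiar two-dimensional complex Wronskian. This is precisely why $F$ was constructed as a $4\times 4$ real matrix via the $\phi$-embedding, and the variation of constants argument above applies directly.
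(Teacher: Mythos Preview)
Your argument is correct and follows the same variation-of-constants approach for the four-dimensional real system as \cite{DS}, Lemma 3.27, to which the paper simply defers for a proof. One small inaccuracy in your regularity paragraph: the antiderivatives $c_k$ of $\alpha^k(a,p,g)$ need not be $C^1$ across $y=0$, since $g\in Y$ is only assumed continuous on $(-1,1)\setminus\{0\}$; the $C^1$ regularity of $\tilde{\mathcal L}^{-1}_{a,p}(g)$ at $0$ (and likewise the $C^2$ regularity away from $0$) actually comes from the cancellation $\sum_k c_k' f_k = 0$ that you establish in your third paragraph, which makes the derivative equal to the manifestly continuous expression $\sum_k c_k f_k'$.
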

\noindent The problem with the inverse $\tilde{\mathcal{L}}^{-1}_{a,p}$ is that is does not map from $Y$ to $X$ because of the singularity at $-1$, which $f_3$ and $f_4$ inherit from $f_{L,4}$. We will correct this by adding a regularization term that subtracts the right amount of the singular function $f_{L,4}$ near $-1$.
\begin{defin}
We define $\psi: J_*\times I\times Y\rightarrow \mathbb{R}$ by
$$\psi(a,p,g):=\sum_{k=3}^4\left[M(a,p)_1^4\frac{M(a,p)_k^3}{M(a,p)_1^3}-M(a,p)_k^4\right]\int_{-1}^1\alpha^k(a,p,g)(x)dx.$$
\end{defin}
\begin{lem}\label{psicontlem}
    We have the bound
    $$|\psi(a,p,g)|\lesssim\|g\|_Y$$
    for all $g\in Y$ and $(a,p)\in J_*\times I$. Furthermore, for every $g\in Y$, the map
    $$(a,p)\mapsto\psi(a,p,g):J_*\times I\rightarrow\mathbb{R}$$
    is continuous.
\end{lem}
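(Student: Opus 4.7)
The plan is to read off both statements directly from the definition of $\psi$ together with Lemma \ref{alphabdlem} and standard continuity facts. The key observation is that $\psi(a,p,g)$ is a sum of products of scalar prefactors depending only on $(a,p)$ with the integrals $\int_{-1}^1\alpha^k(a,p,g)(x)\,dx$ for $k\in\{3,4\}$; once the prefactors are controlled, both statements follow by bounding or passing to the limit under the integral.

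For the quantitative bound, note that $M(a,p)=F_L^{-1}(0)F_R(0,a,p)$ is continuous on the compact set $J_*\times I$ (since $F_R(0,\cdot,\cdot)$ is continuous), and by Lemma \ref{admfundmlem} $|M(a,p)_1^3|$ is bounded below by a positive constant on $J_*\times I$. Hence the prefactors $M(a,p)_1^4\tfrac{M(a,p)_k^3}{M(a,p)_1^3}-M(a,p)_k^4$ for $k\in\{3,4\}$ are uniformly bounded in $(a,p)$. Combining this with the estimate $\int_{-1}^1|\alpha^k(a,p,g)(x)|\,dx\lesssim\|g\|_Y$ from Lemma \ref{alphabdlem} immediately yields $|\psi(a,p,g)|\lesssim\|g\|_Y$.

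For continuity I would fix $g\in Y$ and apply dominated convergence. The prefactors are already continuous in $(a,p)$ by the above. For each fixed $x\in(-1,1)$, the matrix $F(x,a,p)$ is continuous in $(a,p)$ with non-vanishing determinant (on $(-1,0]$ via $F=F_L(\cdot)M(a,p)$ together with $\det M(a,p)\neq 0$, and on $[0,1)$ by Lemma \ref{admfundmlem}); therefore $F^{-1}(x,\cdot,\cdot)$ is continuous in $(a,p)$ by continuity of matrix inversion, and so is $\alpha^k(a,p,g)(x)$. The proof of Lemma \ref{alphabdlem} supplies the pointwise bounds $|\alpha^k(a,p,g)(x)|\lesssim(1-x)^{\tfrac{4}{p-1}-2}\|g\|_Y$ for $x\in[0,1)$ and $|\alpha^k(a,p,g)(x)|\lesssim\|g\|_Y$ for $x\in(-1,0]$. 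Since $I$ is compact and bounded away from $5$, there exists $\varepsilon>0$ with $\tfrac{4}{p-1}-2\geq -1+\varepsilon$ uniformly in $p\in I$, so $(1-x)^{-1+\varepsilon}+1$ is an integrable majorant on $(-1,1)$ valid simultaneously for all $(a,p)\in J_*\times I$. Dominated convergence then gives continuity of $(a,p)\mapsto\int_{-1}^1\alpha^k(a,p,g)(x)\,dx$, and hence of $\psi(\cdot,\cdot,g)$.

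The mildly delicate point is producing the single integrable majorant valid uniformly in $(a,p)$, which hinges on the compactness of $I$ together with the energy-subcritical assumption $p<5$ (ensuring the exponent $\tfrac{4}{p-1}-2$ stays strictly greater than $-1$ on $I$). The remaining ingredients are routine matrix-inversion continuity and dominated convergence, so no genuine obstacle is expected.
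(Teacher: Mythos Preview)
Your proof is correct and rests on the same ingredients as the paper's: the uniform bound from Lemma~\ref{alphabdlem} for the quantitative estimate, and the uniform integrability of $\alpha^k(a,p,g)$ near $y=1$ for continuity. The only cosmetic difference is that the paper packages the continuity argument as an explicit tail-splitting (choosing $\eta>0$ so that $\int_{1-\eta}^1|\alpha^k|$ is uniformly small, then using uniform continuity on $[-1,1-\eta]$) rather than invoking dominated convergence with your majorant $(1-x)^{-1+\varepsilon}+1$; both exploit precisely the same uniform pointwise bound.
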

\begin{proof}
    The bound follows directly from Lemma \ref{alphabdlem}. For the continuity statement observe that by Lemma \ref{alphabdlem}, for given $\varepsilon>0$, there exist $\eta,\delta>0$ such that
    $$\int_{1-\eta}^1\left|\alpha^k(a,p,g)-\alpha^k(b,q,g)\right|<\frac{\varepsilon}{2}$$
    for all $(a,p),(b,q)\in J_*\times I$.
    Furthermore, for all $g\in Y$, $k\in\lbrace 1,2,3,4\rbrace$, the map $(y,a,p)\mapsto\alpha^k(a,p,g)(y)$ is continuous on $[-1,1-\eta]\times J_*\times I$, which gives
    $$\int^{1-\eta}_{-1}\left|\alpha^k(a,p,g)-\alpha^k(b,q,g)\right|<\frac{\varepsilon}{2},$$
    provided $|a-b|+|p-q|<\delta$.
\end{proof}
 
\begin{defin}
    Let $(a,p,g)\in J_*\times I\times Y$ and let $\chi:\mathbb{R}\rightarrow[0,1]$ be a smooth cut-off that satisfies $\chi(y)=1$ for $y\leq-\frac{1}{2}$, $\chi(y)=0$ for $y\geq 0$, and $|\chi'(y)|\leq3$ for all $y\in\mathbb{R}$. Then we set
    $$\mathcal{J}(a,p,g)(y):=\tilde{\mathcal{L}}^{-1}_{a,p}(g)(y)-\chi(y)\left[F_L(y)_4^1+iF_L(y)_4^2\right]\psi(a,p,g)$$
    for $y\in(-1,1)$.
\end{defin}
\begin{rem}
    Note that $\mathcal{J}(a,p,\cdot)$ is not an inverse of $\tilde{\mathcal{L}}_{a,p}$, unless $\psi(a,p,g)=0$.
\end{rem}

\begin{prop}\label{Jbdlem}
    We have the bound
    $$\|\mathcal{J}(a,p,g)\|_X\lesssim\|g\|_Y$$
    for all $(a,p,g)\in J_*\times I\times Y$.
\end{prop}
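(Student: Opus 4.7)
My plan is to bound $\mathcal{J}(a,p,g)$ in the $X$-norm by splitting the analysis into the two intervals $[0,1)$ and $(-1,0]$, and treating the function value and the weighted derivative $(1-y^2)|\mathcal{J}(a,p,g)'(y)|$ in each. On $[0,1)$ the cutoff $\chi$ vanishes, so $\mathcal{J}(a,p,g) = \tilde{\mathcal{L}}_{a,p}^{-1}(g)$ there. Using the asymptotics for $f_j$ and $f_j'$ from Lemma \ref{asymFRinvlem} together with the pointwise bounds from Lemma \ref{alphabdlem}, each of the three terms in the definition of $\tilde{\mathcal{L}}_{a,p}^{-1}(g)$ is bounded by $\|g\|_Y$. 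For $k \in \{1,2\}$ the bound $|f_k| = O(1)$ pairs with $\bigl|\int_{-1}^y \alpha^k(a,p,g)\bigr| \lesssim \|g\|_Y$; for $k \in \{3,4\}$ the decay $|f_k| = O((1-y)^{3-4/(p-1)})$ absorbs $|\alpha^k| \lesssim (1-y)^{4/(p-1)-2}\|g\|_Y$ after integration over $[y,1]$ (which converges since $p<5$), yielding an $O((1-y)^2)\|g\|_Y$ contribution. The weighted derivative bound is obtained by the product rule: the pieces where the derivative lands on the integrals $\int\alpha^k$ combine to zero by the algebraic identity underlying Lemma \ref{Linvlem}, and the remaining $f_j'$-terms carry $(1-y)^{-1}$ singularities that are absorbed by the weight $(1-y^2)$.

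On $(-1,0]$ the situation is more delicate: since $F = F_L M$ and $f_{L,3}, f_{L,4}$ are singular like $(1+y)^{-1}$, every $f_j$ inherits a leading singular part $(M_j^3 + iM_j^4)(1+y)^{-1}$ near $-1$. I would extract these leading singularities from each of the three terms in $\tilde{\mathcal{L}}_{a,p}^{-1}(g)$. The first ($k \in \{1,2\}$) sum is automatically bounded because $\bigl|\int_{-1}^y \alpha^k\bigr| \lesssim 1+y$ kills the $(1+y)^{-1}$. The second ($k \in \{3,4\}$) sum has singular part $-(1+y)^{-1}\sum_{k=3}^4 (M_k^3 + iM_k^4)\int_{-1}^1 \alpha^k$, and the third ``correction'' term has singular part $(M_1^3 + iM_1^4)(1+y)^{-1}\sum_{k=3}^4 \frac{M_k^3}{M_1^3}\int_{-1}^1 \alpha^k$, well-defined because $M_1^3 \neq 0$ by Lemma \ref{admfundmlem}. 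Adding these, the real ($M_k^3$) contributions cancel exactly, and the surviving imaginary part is precisely $i(1+y)^{-1}\psi(a,p,g)$ by the very definition of $\psi$. This is then cancelled by the regularization term $-\chi(y)f_{L,4}(y)\psi(a,p,g)$, since $\chi \equiv 1$ near $-1$ and $f_{L,4}(y) = i(1+y)^{-1}[1+O((1+y)^2)]$; the remaining $O(1)$ pieces are bounded by $\|g\|_Y$ using Lemma \ref{psicontlem}.

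The bound on $(1-y^2)|\mathcal{J}(a,p,g)'(y)|$ proceeds in the same fashion. On $(-1,0]$ the derivatives $f_{L,3}', f_{L,4}'$ produce $(1+y)^{-2}$ singularities, one order of which is absorbed by the weight $(1+y)$, and the remaining $(1+y)^{-1}$ is killed by the derivative of the regularization term because $f_{L,4}'(y) = -i(1+y)^{-2}[1+O((1+y)^2)]$ has the matching singular structure. The $\chi'$-contribution from the cutoff is smooth and supported on $[-1/2,0]$, hence harmless. The main obstacle throughout is the careful bookkeeping of singular parts at $-1$: one has to confirm that precisely the $(M_j^3 + iM_j^4)(1+y)^{-1}$ leading terms from the three variation-of-constants pieces combine into the exact quantity that the regularization is designed to subtract. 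Once this cancellation is verified, the remainder estimates are routine and uniform in $p \in I$ by continuity of all asymptotic coefficients, so the analysis tracks the $p=3$ case from \cite{DS} line for line.
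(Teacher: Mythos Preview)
Your proposal is correct and follows essentially the same approach as the paper: split into $[0,1)$ and $(-1,0]$, use the asymptotics of $f_j$, $f_j'$ together with Lemma~\ref{alphabdlem} on the right, and on the left expand $F=F_LM$ to show that the $(1+y)^{-1}$ contributions from $f_{L,3},f_{L,4}$ combine exactly into $i(1+y)^{-1}\psi(a,p,g)$, which the regularization term removes. The only noteworthy deviation is that you invoke the identity $\sum_{k=1}^4 f_k(y)\alpha^k(a,p,g)(y)=0$ (coming from $FF^{-1}=\mathrm{id}$) to dispose of the boundary terms when differentiating the integrals, whereas the paper simply bounds those terms individually; both are valid and equally short.
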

\begin{proof}
For $(y,a,p)\in[0,1)\times J_*\times I$ we have $\mathcal{J}(a,p,g)=\tilde{\mathcal{L}}^{-1}_{a,p}(g)$ and
$$|F^1_k(y,a,p)+iF_k^2(y,a,p)|\lesssim 1,\hspace{1cm}|F_k^3(y,a,p)+iF_k^4(y,a,p)|\lesssim 1$$
for $k\in\lbrace 1,2\rbrace$ and
$$|F^1_k(y,a,p)+iF^2_k(y,a,p)|\lesssim (1-y)^{3-\tfrac{4}{p-1}}\lesssim1,\hspace{1cm}|F^3_k(y,a,p)+iF^4_k(y,a,p)|\lesssim (1-y)^{-\tfrac{4}{p-1}}$$

\noindent for $k\in\lbrace 3,4\rbrace$. Hence, from Lemma \ref{alphabdlem} we obtain
$$\left|\mathcal{J}(a,p,g)(y)\right|\lesssim \|g\|_Y$$
and
    \begin{align*}
    \left|\mathcal{J}(a,p,g)'(y)\right|=
        &\Bigg|\sum_{k=1}^2\left[F(y,a,p)_k^3+iF(y,a,p)_k^4\right]\int_{-1}^y\alpha^k(a,p,g)(x)dx\\&+\sum_{k=1}^2\left[F(y,a,p)_k^1+iF(y,a,p)_k^2\right]\alpha^k(a,p,g)(y)\\
        &-\sum_{k=3}^4\left[F(y,a,p)_k^3+iF(y,a,p)_k^4\right]\int^{1}_y\alpha^k(a,p,g)(x)dx\\&+\sum_{k=3}^4\left[F(y,a,p)_k^1+iF(y,a,p)_k^2\right]\alpha^k(a,p,g)(y)\\
        &+\left[F(y,a,p)_1^3+iF(y,a,p)_1^4\right]\sum_{k=3}^4\frac{M(a,p)_k^3}{M(a,p)_1^3}\int_{-1}^1\alpha^k(a,p,g)(x)dx\Bigg|\\
        \lesssim&\ \|g\|_Y+(1-y)^{-\tfrac{4}{p-1}}\int_1^y(1-x)^{\tfrac{4}{p-1}-2}\|g\|_Ydx+(1-y)^{3-\tfrac{4}{p-1}}(1-y)^{\tfrac{4}{p-1}-2}\|g\|_Y\\
        \lesssim&(1-y)^{-1}\|g\|_Y
    \end{align*}for all $(y,a,p)\in[0,1)\times J_*\times I$.
    
    In the case $y\in(-1,0]$, we have $F(y,a,p)=F_L(y)M(a,p)$. Consequently, if $y\in(-1,-\tfrac{1}{2}]$,
    \begin{align*}
        \mathcal{J}(a,p,g)(y)=&\tilde{\mathcal{L}}_{a,p}^{-1}(a,p,g)(y)-\left[F_L(y)^1_4+iF_L(y)^2_4\right]\psi(a,p,g)\\
        =&\sum_{k=1}^2\sum_{l=1}^4\left[F_L(y)_l^1+iF_L(y)_l^2\right]M(a,p)_k^l\int_{-1}^y\alpha^k(a,p,g)\\
        &-\sum_{k=3}^4\sum_{l=1}^4\left[F_L(y)_l^1+iF_L(y)_l^2\right]M(a,p)_k^l\int_y^1\alpha^k(a,p,g)\\
        &+\sum_{k=3}^4\sum_{l=1}^4\left[F_L(y)_l^1+iF_L(y)_l^2\right]M(a,p)_1^l\frac{M(a,p)_k^3}{M(a,p)_1^3}\int_{-1}^1\alpha^k(a,p,g)
        \\&-\sum_{k=3}^4\left[M(a,p)_1^4\frac{M(a,p)^3_k}{M(a,p)_1^3}-M(a,p)_k^4\right]\left[F_L(y)_4^1+iF_L(y)_4^2\right]\int_{-1}^1\alpha^k(a,p,g).
    \end{align*}
    Since $$\left|F_L(y)_l^1+iF_L(y)_l^2\right|\lesssim1,\hspace{1cm}\left|F_L(y)_l^3+iF_L(y)_l^4\right|\lesssim1$$
    for $l\in\lbrace 1,2\rbrace$ and
    $$\left|F_L(y)_l^1+iF_L(y)_l^2\right|\lesssim(1+y)^{-1},\hspace{1cm}\left|F_L(y)_l^3+iF_L(y)_l^4\right|\lesssim(1+y)^{-2}$$
    for $l\in\lbrace 3,4\rbrace$, we obtain from Lemma \ref{alphabdlem}
    
    \begin{align*}
        \mathcal{J}(a,p,g)(y)=&\ O((1+y)^0)\|g\|_Y+O((1+y)^{-1})\sum_{k=1}^2\sum_{l=3}^4\int_{-1}^y\|g\|_Y\\
        &-\sum_{k=3}^4\sum_{l=3}^4\left[F_L(y)_l^1+iF_L(y)_l^2\right]M(a,p)_k^l\int_y^1\alpha^k(a,p,g)\\
        &+\sum_{k=3}^4\sum_{l=3}^4\left[F_L(y)_l^1+iF_L(y)_l^2\right]M(a,p)_1^l\frac{M(a,p)_k^3}{M(a,p)_1^3}\int_{-1}^1\alpha^k(a,p,g)
        \\&-\sum_{k=3}^4\left[M(a,p)_1^4\frac{M(a,p)^3_k}{M(a,p)_1^3}-M(a,p)_k^4\right]\left[F_L(y)_4^1+iF_L(y)_4^2\right]\int_{-1}^1\alpha^k(a,p,g)\\
        =&\ O((1+y)^0)\|g\|_Y+\sum_{k=3}^4\sum_{l=3}^4\left[F_L(y)_l^1+iF_L(y)_l^2\right]M(a,p)_k^l\int_{-1}^y\alpha^k(a,p,g),
    \end{align*}
    which yields
    \begin{align*}
        \left|\mathcal{J}(a,p,g)(y)\right|\lesssim&\|g\|_Y
    \end{align*}
    for all $(y,a,p)\in(-1,-\tfrac{1}{2}]\times J_*\times I$. Analogously, we find
    \begin{align*}
        \left|\mathcal{J}(a,p,g)'(y)\right|\lesssim\|g\|_Y(1+y)^{-1}
    \end{align*}
    for all $(y,a,p)\in(-1,-\tfrac{1}{2}]\times J_*\times I$. Finally, in the domain $y\in[-\tfrac{1}{2},0]$, the claimed bounds are obvious.
\end{proof}
 
 \noindent
To prove continuity of $\mathcal{J}(.,.,g)$, we will need the following Lemmas to handle the oscillatory behavior near $y=1$.
\begin{lem}\label{osclem}
    Let $J\subset\mathbb{R}\setminus\lbrace 0\rbrace$ be compact and $q\in(1,3)$. Then we have the bound
    $$\left|\int_y^1e^{-i\varphi(x,\alpha)}(1-x)^{q-2}dx\right|\lesssim(1-y)^{q+1}$$
    for all $y\in[0,1)$ and all $\alpha\in J$.
\end{lem}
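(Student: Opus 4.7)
The plan is to use a single integration by parts to extract the decay from the rapid oscillation of $e^{-i\varphi(\cdot,\alpha)}$ near $x=1$. The triangle inequality alone yields only
$$\int_y^1(1-x)^{q-2}dx=\frac{(1-y)^{q-1}}{q-1},$$
falling short by exactly two powers of $1-y$; these two powers must come from the fact that $\varphi'(x,\alpha)\sim-\tfrac{4\alpha}{(1-x)^3}$ as $x\to 1$, so that dividing by $\varphi'$ in an integration by parts pays a factor $(1-x)^3$.

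First I would split into two regimes according to the size of $1-y$. After the substitution $s=1-x$, set
$$\Phi(s,\alpha):=\varphi(1-s,\alpha),\qquad \Phi'(s,\alpha)=\frac{2}{\alpha s^3}\bigl[2\alpha^2-\alpha^2 s - s^2\bigr].$$
The bracket has a unique positive root $s_0(\alpha)=\tfrac{1}{2}\bigl(-\alpha^2+|\alpha|\sqrt{\alpha^2+8}\bigr)$, which is continuous and strictly positive on $\mathbb{R}\setminus\lbrace 0\rbrace$. By compactness of $J$ one obtains $s_{\min}:=\inf_{\alpha\in J}s_0(\alpha)>0$ and constants $C_1,C_2>0$ (depending only on $J$) such that $|\Phi'(s,\alpha)|\ge C_1/s^3$ and $|\Phi''(s,\alpha)|\le C_2/s^4$ for all $(s,\alpha)\in(0,s_{\min}/2]\times J$. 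In the easy range $y\in[0,1-s_{\min}/2]$ one has $(1-y)^{q+1}\ge(s_{\min}/2)^{q+1}$, so the trivial estimate $\bigl|\int_y^1 e^{-i\varphi}(1-x)^{q-2}dx\bigr|\le(1-y)^{q-1}/(q-1)\le 1/(q-1)$ is already of the desired order.

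In the main range $y\in(1-s_{\min}/2,1)$, write $\eta:=1-y$ and rewrite the integral as $\int_0^\eta s^{q-2}e^{-i\Phi(s,\alpha)}ds$. Using $e^{-i\Phi}=\tfrac{1}{-i\Phi'}\tfrac{d}{ds}e^{-i\Phi}$ and integrating by parts yields
$$\int_0^\eta s^{q-2}e^{-i\Phi(s,\alpha)}ds=\left[\frac{s^{q-2}e^{-i\Phi(s,\alpha)}}{-i\Phi'(s,\alpha)}\right]_0^\eta-\int_0^\eta\frac{d}{ds}\!\left(\frac{s^{q-2}}{-i\Phi'(s,\alpha)}\right)e^{-i\Phi(s,\alpha)}ds.$$
The lower boundary term vanishes because $s^{q-2}/|\Phi'(s,\alpha)|\le C_1^{-1}s^{q+1}\to 0$ as $s\to 0^+$ (here one uses $q>1$), and the upper one is bounded in modulus by $C_1^{-1}\eta^{q+1}\lesssim(1-y)^{q+1}$. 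For the remaining integral I would write
$$\frac{d}{ds}\frac{s^{q-2}}{\Phi'(s,\alpha)}=\frac{(q-2)s^{q-3}}{\Phi'(s,\alpha)}-\frac{s^{q-2}\Phi''(s,\alpha)}{\Phi'(s,\alpha)^2},$$
and use the bounds on $\Phi',\Phi''$ to conclude that both summands are bounded by a $J$-dependent multiple of $s^q$; integrating over $(0,\eta)$ then produces the required $O_J(\eta^{q+1})$.

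The only real obstacle is the uniform control of the constants $C_1,C_2$ as $\alpha$ varies, but this is immediate from the continuity of $s_0(\alpha)$ and of the coefficients appearing in $\Phi',\Phi''$, together with the compactness of $J$ in $\mathbb{R}\setminus\lbrace 0\rbrace$. Note that the upper bound $q<3$ is not actually required for the estimate; only $q>1$ is used, to kill the boundary contribution at $s=0$.
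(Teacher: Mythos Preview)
Your proof is correct and follows essentially the same route as the paper: a single integration by parts exploiting $|\varphi'(x,\alpha)|\sim (1-x)^{-3}$ near $x=1$. The paper's own proof is a one-line sketch that writes the integration-by-parts identity in the $x$-variable and defers the rest to ``inserting the explicit expression for $\varphi$''; you carry out the same computation in the $s=1-x$ variable with full detail. Your explicit splitting into the regimes $1-y\ge s_{\min}/2$ and $1-y<s_{\min}/2$ is a genuine improvement in rigor over the paper's sketch, since $\varphi'(\cdot,\alpha)$ can vanish in $[0,1)$ for $\alpha\in J$ (indeed it does near the value $\alpha\approx a_*$ relevant to the application), so the paper's displayed formula is not literally valid on all of $[0,1)$ without such a cutoff. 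Your observation that $q<3$ is not used is also correct; that hypothesis is there only because $q=\tfrac{4}{p-1}\in(1,3)$ in the application.
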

\begin{proof}
    An integration by parts yields
    \begin{align*}
    \int_y^1e^{-i\varphi(x,\alpha)}(1-x)^{q-2}dx=&\int_y^1\frac{i(1-x)^{q-2}}{\varphi'(x,\alpha)}\partial_xe^{-i\varphi(x,\alpha)}dx
    \\=&-\frac{i(1-y)^{q-2}}{\varphi'(y,\alpha)}e^{-i\varphi(y,\alpha)}-\int_y^1e^{-i\varphi(x,\alpha)}\partial_x\left(\frac{i(1-x)^{q-2}}{\varphi'(x,\alpha)}\right)dx    
    \end{align*}
    and by inserting the explicit expression for $\varphi$, the claim follows.
\end{proof}
 
\begin{lem}\label{oscintlem}
    Let $J\subset\mathbb{R}\setminus\lbrace 0\rbrace$ and $K\subset(1,3)$ be compact intervals and $f\in C([0,1])$. Define $\Psi:[0,1)\times J\times K\rightarrow\mathbb{C}$ by
    $$\Psi(y,\alpha,q)=(1-y)^{1-q}e^{i\varphi(y,\alpha)}\int_y^1e^{-i\varphi(y,\alpha)}(1-x)^{q-2}f(x)dx.$$ Then $\Psi$ extends to a continuous function on $[0,1]\times J\times K$.
\end{lem}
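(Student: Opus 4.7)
The plan is to show that $\Psi$ extends continuously by setting $\Psi(1,\alpha,q):=0$ and verifying that $\lim_{(y,\alpha,q)\to(1,\alpha_0,q_0)}\Psi(y,\alpha,q)=0$ for every $(\alpha_0,q_0)\in J\times K$. Continuity on the open piece $[0,1)\times J\times K$ is immediate from dominated convergence applied to the parameter-dependent integral, since $(1-x)^{q-2}\|f\|_{L^\infty([0,1])}$ provides an integrable majorant uniformly on compact parameter sets.

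First I would isolate the oscillatory and non-oscillatory contributions by writing $f(x)=f(1)+[f(x)-f(1)]$ and splitting $\Psi=\Psi_1+\Psi_2$ accordingly (reading the integrand as $e^{-i\varphi(x,\alpha)}(1-x)^{q-2}f(x)$, which is the only way the statement is consistent with Lemma \ref{osclem}). For the constant piece $\Psi_1$, whose integrand is $(1-x)^{q-2}f(1)e^{-i\varphi(x,\alpha)}$, I would apply Lemma \ref{osclem} to obtain
$$|\Psi_1(y,\alpha,q)|\lesssim (1-y)^{1-q}(1-y)^{q+1}=(1-y)^2,$$
which vanishes as $y\to 1$. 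One must verify that the implicit constant in Lemma \ref{osclem} can be chosen uniformly for $(\alpha,q)\in J\times K$; inspection of its one-step integration-by-parts proof shows that all resulting boundary and bulk terms depend continuously on $(\alpha,q)$ through $\varphi'(\cdot,\alpha)$ and the exponent $q-2$, so uniformity follows from compactness of $J\times K$ together with $J\subset\mathbb{R}\setminus\{0\}$.

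For the fluctuation piece $\Psi_2$, given $\varepsilon>0$ I would use uniform continuity of $f$ on $[0,1]$ to pick $\delta>0$ with $|f(x)-f(1)|<\varepsilon$ for $x\in[1-\delta,1]$. Then for $y\in[1-\delta,1)$, the crude absolute-value bound
$$|\Psi_2(y,\alpha,q)|\le \varepsilon\,(1-y)^{1-q}\int_y^1 (1-x)^{q-2}\,dx=\frac{\varepsilon}{q-1}\le\frac{\varepsilon}{\min K-1}$$
holds, using only $K\subset(1,3)$ so that $\min K-1>0$. Combining the two pieces gives $\limsup_{(y,\alpha,q)\to(1,\alpha_0,q_0)}|\Psi(y,\alpha,q)|\le \varepsilon/(\min K-1)$, and since $\varepsilon$ was arbitrary, the limit is zero, uniformly in $(\alpha_0,q_0)$ on $J\times K$.

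The main conceptual step is the splitting $f=f(1)+[f-f(1)]$: the constant piece benefits from genuine oscillatory cancellation (handled by Lemma \ref{osclem}), while the small fluctuation is absorbed by an elementary estimate that uses only $q>1$ for integrability. I expect no real obstacle; the most delicate bookkeeping is tracking uniformity of the constant from Lemma \ref{osclem} over $(\alpha,q)\in J\times K$, which ultimately reduces to continuity of the explicit expressions involving $\varphi'$ on the compact parameter set.
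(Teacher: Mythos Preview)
Your proposal is correct and follows essentially the same approach as the paper: split $f=f(1)+[f(x)-f(1)]$, apply Lemma~\ref{osclem} to the constant piece to get the $(1-y)^2$ decay, and control the fluctuation piece by the crude absolute-value estimate after choosing $\delta$ via continuity of $f$ at $1$. Your version is slightly more detailed (you address continuity on the open piece, the typo $e^{-i\varphi(y,\alpha)}$ versus $e^{-i\varphi(x,\alpha)}$ in the integrand, and uniformity of the constant from Lemma~\ref{osclem} over $J\times K$), but the argument is the same.
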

\begin{proof}
Let $\epsilon>0$ be arbitrary. Then there exists a $\delta>0$ such that $|f(x)-f(1)|<\epsilon$ for all $x\in[1-\delta,1)$. Lemma \ref{osclem} gives
\begin{align*}
    \left|\Psi(y,\alpha,q)\right|\leq&\biggl|(1-y)^{1-q}\int_y^1e^{-i\varphi(y,\alpha)}(1-x)^{q-2}f(1)dx\biggr|+\biggl|(1-y)^{1-q}\int_y^1e^{-i\varphi(y,\alpha)}(1-x)^{q-2}[f(x)-f(1)]dx\biggr|\\\lesssim&\ (1-y)^2+\epsilon
\end{align*}
for all $y\in[1-\delta,1)$ and all $(\alpha,q)\in J\times K$. Consequently, $\lim_{y\rightarrow 1-}\Psi(y,\alpha,q)=0$ and we extend $\Psi$ to $[0,1]\times J\times K$ by setting $\Psi(1,\alpha,q):=0$.
\end{proof}
 
\begin{lem}\label{JFcontlem}
    The map $(a,p)\mapsto\mathcal{J}(a,p,g):J_*\times I\rightarrow X$ is continuous.
\end{lem}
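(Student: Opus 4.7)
The plan is to prove continuity by showing that both $(y,a,p)\mapsto \mathcal{J}(a,p,g)(y)$ and $(y,a,p)\mapsto(1-y^2)\mathcal{J}(a,p,g)'(y)$ extend to continuous functions on $[-1,1]\times J_*\times I$. Since $[-1,1]\times J_*\times I$ is compact, such continuous extensions are automatically uniformly continuous, which immediately yields continuity of the map $(a,p)\mapsto\mathcal{J}(a,p,g)$ into $X$ with respect to $\|\cdot\|_X$. Since the cut-off correction $-\chi(y)[F_L(y)_4^1+iF_L(y)_4^2]\psi(a,p,g)$ depends on $(a,p)$ only through the scalar $\psi(a,p,g)$, and this is continuous by Lemma \ref{psicontlem}, it suffices to establish the analogous claim for $\tilde{\mathcal{L}}_{a,p}^{-1}(g)$ on appropriate neighborhoods of the endpoints, after accounting for the regularization near $-1$.

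On any compact sub-interval $[-1+\eta,1-\eta]\subset(-1,1)$, continuity of $(y,a,p)\mapsto F(y,a,p)$, $(y,a,p)\mapsto F^{-1}(y,a,p)$, and $(a,p)\mapsto M(a,p)$ (the latter via continuity of $F_R(0,\cdot,\cdot)$), together with continuity of $\alpha^k(a,p,g)$ jointly in $(y,a,p)$ away from the endpoints, allows me to interchange limits and integrals via dominated convergence (the dominating function being provided by Lemma \ref{alphabdlem}). This gives joint continuity of both $\mathcal{J}(a,p,g)$ and $(1-y^2)\mathcal{J}(a,p,g)'$ in the bulk.

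For the endpoint $y=-1$, I will reuse the algebraic simplification from the proof of Proposition \ref{Jbdlem}: on $(-1,-\tfrac{1}{2}]$, the singular contribution of $f_{L,4}$ from $\tilde{\mathcal{L}}_{a,p}^{-1}(g)$ is exactly canceled by the regularization $\chi(y)[F_L(y)_4^1+iF_L(y)_4^2]\psi(a,p,g)$, reducing the remaining expression to combinations of $[F_L(y)_l^1+iF_L(y)_l^2]$ (which are bounded and independent of $(a,p)$), $M(a,p)_k^l$ (continuous in $(a,p)$), and integrals $\int_{-1}^y\alpha^k(a,p,g)$ (continuous in $(y,a,p)$ by Lemma \ref{alphabdlem} together with dominated convergence). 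A parallel argument, differentiating $F_L$ and the integrals, handles $(1-y^2)\mathcal{J}(a,p,g)'(y)$ there.

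The main obstacle is the endpoint $y=1$, where the fundamental solutions $f_3,f_4$ oscillate like $e^{i\varphi(y,a_*+a)}$ and the corresponding weights $\alpha^k$ for $k\in\{3,4\}$ oscillate like $e^{-i\varphi(y,a_*+a)}$, so that dominated convergence alone is insufficient to control the integrals $\int_y^1\alpha^k(a,p,g)$. Here I will invoke Lemma \ref{oscintlem} with $q=\tfrac{4}{p-1}\in(1,3)$ (noting $p\in I\subset(\tfrac{7}{3},5)$). After extracting the leading oscillatory factor, each of the $k\in\{3,4\}$ terms in $\tilde{\mathcal{L}}_{a,p}^{-1}(g)(y)$ and in its derivative (multiplied by $(1-y^2)$) can be written in the form
\[
(1-y)^{m}\,e^{i\varphi(y,a_*+a)}\!\int_y^1 e^{-i\varphi(x,a_*+a)}(1-x)^{q-2}h(x,a,p)\,dx
\]
for some $m\geq 0$ and continuous $h$ obtained from the factors in $F_R$, $F^{-1}$, and $g$ (with $g$ first approximated uniformly by the value $(1+x)(1-x)^2 g(x)$ evaluated at $x=1$ plus a remainder controlled by the $Y$-norm, as in the proof of Lemma \ref{oscintlem}). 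Lemma \ref{oscintlem} then shows that each such term extends continuously to $y=1$, the $m>0$ terms with value $0$, and the $m=0$ terms (appearing in the derivative) with the value dictated by the continuous extension. Combining the three regions gives the desired $X$-norm continuity.
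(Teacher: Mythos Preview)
Your proposal is correct and follows essentially the same approach as the paper: extend $(y,a,p)\mapsto\mathcal{J}(a,p,g)(y)$ and $(y,a,p)\mapsto(1-y^2)\mathcal{J}(a,p,g)'(y)$ continuously to the compact set $[-1,1]\times J_*\times I$, handling the bulk and the left endpoint via the estimates from Proposition~\ref{Jbdlem} and dominated convergence, and treating the oscillatory derivative terms near $y=1$ by invoking Lemma~\ref{oscintlem} with $q=\tfrac{4}{p-1}$. The paper is terser about the bulk and the left endpoint (it just defers to the proof of Proposition~\ref{Jbdlem}), but the substantive step---using the oscillatory integral lemma for the $k\in\{3,4\}$ contributions to the derivative near $y=1$---is identical.
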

\begin{proof}
    From the proof of Proposition \ref{Jbdlem} it is evident that $(y,a,p)\mapsto \mathcal{J}(a,p,g)(y)$ extends to a continuous function on $[-1,1]\times J_*\times I$ and $(y,a,p)\mapsto (1-y^2)\mathcal{J}(a,p,g)'(y)$ is continuous on $[-1,1-\delta]\times J_*\times I$ for any $\delta\in (0,1]$. While $f'_1(\cdot,a,p)$, $f'_2(\cdot,a,p)$ extend continuously to $1$, $f'_3(\cdot,a,p)$, $f'_4(\cdot,a,p)$ are oscillatory near $1$. Consequently, the only delicate term that occurs in $\mathcal{J}(a,p,g)'$ is    $$\sum_{k=3}^4\left[F(y,a,p)_k^3+iF(y,a,p)_k^4\right]\int_{y}^1\alpha^k(a,p,g)(x)dx.$$
    Recall that $f_k'(y,a,p)=O((1-y)^{-\tfrac{4}{p-1}}a^0p^0)e^{i\varphi(y,a_*+a)}$ for $k\in\lbrace3,4\rbrace$ and hence, by Lemma \ref{asymFRinvlem}, $$\left[F(y,a,p)_k^3+iF(y,a,p)_k^4\right]\int_y^1\alpha^k(a,p,g)(x)dx$$ behaves like $$(1-y)^{-\tfrac{4}{p-1}}e^{i\varphi(y,a_*+a)}\int_y^1e^{-i\varphi(y,a_*+a)}(1-x)^{\tfrac{4}{p-1}}g(x)dx$$ as $y\rightarrow 1$. By Lemma \ref{oscintlem}, with $K=\lbrace\tfrac{4}{p-1}:p\in I\rbrace\subset(1,3)$, $(y,a,p)\mapsto (1-y^2)\mathcal{J}(a,p,g)'(y)$ extends to a continuous function on $[-1,1]\times J_*\times I$.
\end{proof}
\section{The solution system}
\begin{defin}
    For $(y,a,p)\in (-1,1)\times J_*\times I$ and $f\in X$ we define
    $$\mathcal{G}(a,p,f)(y):=\tilde{\mathcal{L}}_{a,p}(f)(y)-\mathcal{L}_{a,p}(f)(y)-\mathcal{N}_{a,p}(f)(y)-\mathcal{R}(a_*+a,p,f_*(y,a,p)).$$
\end{defin}
\begin{rem}
    Note that $\mathcal{R}(a_*+a,p,f_*(\cdot,a,p)+f)=0$ is equivalent to $\tilde{\mathcal{L}}_{a,p}(f)=\mathcal{G}(a,p,f)$.
\end{rem}
\begin{lem}\label{operatordiffcontlem}
    Let $f\in X$. Then the map $(a,p)\mapsto\tilde{\mathcal{L}}_{a,p}(f)-\mathcal{L}_{a,p}(f):J_*\times I\rightarrow Y$ is continuous.
\end{lem}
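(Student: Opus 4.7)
The plan is to expand the difference operator explicitly into a sum of coefficient-times-derivative terms, verify the required asymptotic cancellations near $y=\pm 1$, and then deduce continuity in $(a,p)$ from compactness and uniform continuity.

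Since the $f''$-terms of $\mathcal{L}_{a,p}$ and $\tilde{\mathcal{L}}_{a,p}$ have the same coefficient, the difference takes the form
\begin{equation*}
(\tilde{\mathcal{L}}_{a,p}-\mathcal{L}_{a,p})(f)(y) = A_1(y,a,p) f'(y) + A_2(y,a,p) \overline{f'(y)} + B_1(y,a,p) f(y) + B_2(y,a,p) \overline{f(y)},
\end{equation*}
where $A_1 := p_1(\cdot,a,p) - p_0(\cdot,a_*+a,p)$, $A_2 := p_2$, $B_1 := q_1 - q_0(\cdot,a_*+a,p) - \tfrac{p+1}{2}\tfrac{|f_*|^{p-1}}{(1-y)^2}$, and $B_2 := q_2 - \tfrac{p-1}{2}\tfrac{|f_*|^{p-3}f_*^2}{(1-y)^2}$. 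Comparing the asymptotic expansions in Proposition \ref{coeffFprop} with the explicit forms of $p_0, q_0$, the $(1-y)^{-3}$ and $(1-y)^{-2}$ singularities of $p_1, q_1$ cancel exactly with those of $p_0, q_0$, and the $\tfrac{2}{1+y}$-parts of $p_1$ and $p_0$ agree. Consequently $A_1, A_2 = O((1-y)^{-1})$ with no singularity at $-1$, while $B_1, B_2 = O((1-y)^{-2}) + O((1+y)^{-1})$. Combining with $|f(y)| \leq \|f\|_X$ and $|f'(y)| \lesssim (1-y^2)^{-1}\|f\|_X$ (from $f\in X$), each of the four weighted summands $(1+y)(1-y)^2 A_j f'$ and $(1+y)(1-y)^2 B_j f$ is uniformly bounded on $(-1,1)\times J_*\times I$, so the difference lies in $Y$ with a uniform bound.

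For continuity in $(a,p)$, I would show that the weighted function
\begin{equation*}
\Phi(y,a,p) := (1+y)(1-y)^2\bigl[A_1 f' + A_2 \overline{f'} + B_1 f + B_2 \overline{f}\bigr](y,a,p)
\end{equation*}
extends to a continuous function on the compact set $[-1,1]\times J_*\times I$. On the interior, joint continuity follows since the coefficient functions inherit smoothness in $(a,p)$ from the construction of $F_L, F_R, M$ and the explicit dependence of $f_*$ on $(a,p)$, while $f,f'$ are continuous in $y$. At $y=1$, one factorizes $(1-y)^2 A_j f' = [(1-y)A_j]\cdot[(1-y)f']$: the first factor extends continuously by the asymptotic structure in Proposition \ref{coeffFprop}, where the coefficient of the $(1-y)^{-1}$ singularity in $A_1$ is the continuous function $\tfrac{4}{p-1} + \tfrac{2i}{a_*+a}$ plus a jointly continuous remainder, and the second because $(1-y^2)f'$ extends continuously by definition of $X$. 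The analogous reasoning handles the $B_j$-terms using the full $(1-y)^2$-weight, and at $y=-1$ the $A_j$ are bounded while the $(1+y)^{-1}$-singularities of $B_j$ are absorbed by the weight $(1+y)$. Uniform continuity on the compact product then yields exactly the required continuity of $(a,p) \mapsto \tilde{\mathcal{L}}_{a,p}(f) - \mathcal{L}_{a,p}(f)$ in the $Y$-norm.

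The main technical obstacle is not the $Y$-boundedness (which is essentially a bookkeeping exercise with the asymptotic expansions already in hand) but verifying that the $O$-remainders in Proposition \ref{coeffFprop} depend continuously on $(a,p)$, rather than merely being uniformly bounded. This requires unpacking the explicit construction of $p_j, q_j$ from $F_L, F_R, M$ and tracking that the rational operations used preserve joint continuity up to the boundary $y=1$, using in particular that $M(a,p)$ and the polynomial parts of $f_{R,j}$ depend continuously on $(a,p)$ and that the oscillatory phase $\varphi(y,a_*+a)$ cancels between $f_3,f_3'$ in the ratios defining $h_1,h_2$. Once this continuity of the remainders is established, the compactness argument closes the proof.
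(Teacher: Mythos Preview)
Your approach is essentially the same as the paper's: decompose the difference into coefficient-times-derivative pieces, use Proposition~\ref{coeffFprop} to cancel the leading singularities and obtain the $O((1+y)^{-1}(1-y)^{-2})$ bound, then pass to continuity via a uniform-continuity argument on the weighted function. One small correction: the weighted function $\Phi$ does \emph{not} in general extend continuously across $y=0$, because the coefficients $p_j,q_j$ are built from $A=-F'F^{-1}$ and $F'$ may jump at $0$ (the matching via $M(a,p)$ only guarantees $F\in C^1$, not $C^2$); indeed, the space $Y$ explicitly allows a discontinuity at $0$. The paper therefore extends $\Phi$ continuously to $[-1,0]\times J_*\times I$ and $[0,1]\times J_*\times I$ separately, and your compactness argument works verbatim on each piece.
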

\begin{proof}
    By Proposition \ref{coeffFprop}, we have
\begin{align*}
    p_1(y,a,p)-p_0(y,a_*+a,p)&=O((1-y)^{-1}),\\
    p_2(y,a,p)&=O((1-y)^{-1}),\\
    q_1(y,a,p)-q_0(y,a_*+a,p)&=O((1+y)^{-1})+O((1-y)^{-2}),\\
    q_2(y,a,p)&=O((1+y)^{-1})+O((1-y)^{-1})
\end{align*}
and hence, for $f\in X$ we obtain
\begin{align*}
    \left[p_1(y,a,p)-p_0(y,a_*+a,p)\right]f'(y)&=O((1-y)^{-2}(1+y)^{-1}),\\
    p_2(y,a,p)\overline{f'(y)}&=O((1-y)^{-2}(1+y)^{-1}),\\
    \left[q_1(y,a,p)-q_0(y,a_*+a,p)-\frac{p+1}{2}\frac{|f_*(\cdot,a,p)|^{p-1}}{(1-y)^2}\right]f(y)&=O((1+y)^{-1}(1-y)^{-2}),\\
    \left[q_2(y,a,p)-\frac{p-1}{2}\frac{|f_*(\cdot,a,p)|^{p-3}f_*(\cdot,a,p)^2}{(1-y)^2}\right]\overline{f(y)}&=O((1+y)^{-1}(1-y)^{-2}).
\end{align*}
Consequently,
$$\tilde{\mathcal{L}}_{a,p}(f)(y)-\mathcal{L}_{a,p}(f)(y)=O((1+y)^{-1}(1-y)^{-2}),$$
which shows that $\tilde{\mathcal{L}}_{a,p}(f)-\mathcal{L}_{a,p}(f)\in Y$ for all $(a,p)\in J_*\times I$. Moreover, it is obvious from the proof of Proposition \ref{coeffFprop} that the coefficients $p_1,p_2,q_1,q_2$ are continuous on $(-1,1)\setminus\lbrace 0\rbrace\times J_*\times I$ and not oscillatory at the endpoints. In particular, the one-sided limits $\lim_{y\rightarrow\pm 1}(1+y)(1-y)^2\left(\tilde{\mathcal{L}}_{a,p}(f)(y)-\mathcal{L}_{a,p}(f)(y)\right)$ exist and depend continuously on $(a,p)$. Hence, the map $(y,a,p)\mapsto (1+y)(1-y)^2\left(\tilde{\mathcal{L}}_{a,p}(f)(y)-\mathcal{L}_{a,p}(f)(y)\right)$ extends continuously to $[-1,0]\times J_*\times I$ and $[0,1]\times J_*\times I$.
\end{proof}

\begin{lem}\label{nonlincontlem}
    Let $f\in X$. Then the map $(a,p)\mapsto\mathcal{N}_{a,p}(f):J_*\times I\rightarrow Y$ is continuous.
\end{lem}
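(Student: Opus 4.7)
My plan is to reduce this to uniform continuity of a bounded continuous function on a compact parameter region. Observing that every term in $\mathcal{N}_{a,p}(f)(y)$ carries the same prefactor $(1-y)^{-2}$, I introduce
$$\mathcal{M}(z,w,p):=(z+w)|z+w|^{p-1}-\tfrac{p+1}{2}|z|^{p-1}w-\tfrac{p-1}{2}|z|^{p-3}z^2\bar w,$$
extended by $w|w|^{p-1}$ at $z=0$, so that $(1-y)^2\mathcal{N}_{a,p}(f)(y)=\mathcal{M}(f_*(y,a,p),f(y),p)$. The strategy is then to prove joint continuity of $\mathcal{M}$ on $\mathbb{C}\times\mathbb{C}\times I$, compose with continuous inputs, and upgrade pointwise continuity to $Y$-norm continuity via compactness.

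First I would verify that $\mathcal{M}$ is jointly continuous. Away from $z=0$ this is elementary since all terms are smooth in their arguments. At $z=0$, I would use the identity $\bigl||z|^{p-3}z^2\bigr|=|z|^{p-1}$ to estimate the last two summands by multiples of $|z|^{p-1}$; since $p-1\geq\tfrac{4}{3}$ uniformly over $p\in I$, these vanish as $z\to 0$ uniformly in $p$, while the first term converges to $w|w|^{p-1}$ by joint continuity of $(z,p)\mapsto z|z|^{p-1}$ for $p>1$.

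Next I would compose with continuous inputs. By Definition \ref{PandaDef} and equation \eqref{f*def}, the map $(y,a,p)\mapsto f_*(y,a,p)$ is continuous on $[-1,1]\times J_*\times I$, and $f\in X\subset C([-1,1])$. Hence $(y,a,p)\mapsto \mathcal{M}(f_*(y,a,p),f(y),p)=(1-y)^2\mathcal{N}_{a,p}(f)(y)$ is continuous on the compact set $[-1,1]\times J_*\times I$. This already gives $\mathcal{N}_{a,p}(f)\in Y$ for each fixed $(a,p)$, since $(1+y)(1-y)^2\mathcal{N}_{a,p}(f)(y)$ then extends continuously to all of $[-1,1]$.

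Finally, compactness upgrades joint continuity to uniform continuity, which translates directly to continuity in the $Y$-norm: given $\epsilon>0$ I would produce $\delta>0$ such that $|a-a'|+|p-p'|<\delta$ implies $\sup_y\bigl|\mathcal{M}(f_*(y,a,p),f(y),p)-\mathcal{M}(f_*(y,a',p'),f(y),p')\bigr|<\epsilon/2$, whence $\|\mathcal{N}_{a,p}(f)-\mathcal{N}_{a',p'}(f)\|_Y\leq\epsilon$. The one genuinely delicate point is the joint continuity of $\mathcal{M}$ at the zero set of $f_*$: although $|z|^{p-3}$ alone blows up near $z=0$ when $p<3$, the algebraic cancellation $|z|^{p-3}z^2=|z|^{p-1}(z/|z|)^2$ restores a positive power of $|z|$, which is precisely the structural feature making the scheme work in the mass supercritical regime where $p-1$ is bounded below.
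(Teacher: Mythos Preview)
Your proof is correct and follows essentially the same approach as the paper, which dispatches the lemma in a single line (``This follows directly from the continuity of $f_*:[-1,1]\times J_*\times I\rightarrow\mathbb{C}$''). Your argument is simply a careful elaboration of that sentence, and your explicit treatment of the potential singularity of $|z|^{p-3}$ at $z=0$ for $p<3$ is a genuine detail the paper's one-liner leaves implicit.
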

\begin{proof}
    This follows directly from the continuity of $f_*:[-1,1]\times J_*\times I\rightarrow\mathbb{C}$.
\end{proof}

\begin{lem}\label{GFcontlem}
    Let $f\in X$. Then, the map $(a,p)\mapsto \mathcal{G}(a,p,f): J_*\times I\rightarrow Y$ is continuous.
\end{lem}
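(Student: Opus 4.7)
The statement is a direct corollary of the three continuity results already established, so the plan is essentially to decompose $\mathcal{G}(a,p,f)$ into its four summands and apply a previous lemma to each one.

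First, I would write
\begin{align*}
\mathcal{G}(a,p,f) = \bigl[\tilde{\mathcal{L}}_{a,p}(f)-\mathcal{L}_{a,p}(f)\bigr] - \mathcal{N}_{a,p}(f) - \mathcal{R}(a_*+a,p,f_*(\cdot,a,p))
\end{align*}
as a sum in $Y$. By Lemma \ref{operatordiffcontlem}, the first bracketed term defines a continuous map $J_*\times I\to Y$ (the proof there relied on the asymptotics of $p_1,p_2,q_1,q_2$ from Proposition \ref{coeffFprop}, which have no oscillatory contributions at the endpoints, and on the fact that the limits of $(1+y)(1-y)^2\bigl[\tilde{\mathcal{L}}_{a,p}(f)-\mathcal{L}_{a,p}(f)\bigr]$ as $y\to\pm 1$ depend continuously on $(a,p)$). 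By Lemma \ref{nonlincontlem}, the second term is continuous into $Y$, using continuity of $f_*$. By Lemma \ref{Rcontlem}, the third term is continuous into $Y$.

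Since $Y$ (with the norm $\|\cdot\|_Y$) is a Banach space and addition on $Y$ is continuous, the sum of three continuous maps $J_*\times I\to Y$ is continuous, which proves the claim.

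The only potential subtlety is that each constituent map has to land in $Y$ and be continuous with respect to the $Y$-norm; i.e., one needs the quantity $(1+y)(1-y)^2(\cdots)$ to extend to a bounded continuous function on each of $[-1,0]\times J_*\times I$ and $[0,1]\times J_*\times I$ uniformly in $(a,p)$ on small parameter patches. All three previous lemmas have been stated in exactly this form, so no additional work is needed. I would not expect any real obstacle here; the substance of the argument has been done in Lemmas \ref{Rcontlem}, \ref{operatordiffcontlem}, and \ref{nonlincontlem}, and this lemma is essentially a bookkeeping step that packages them into the single continuity statement on which the subsequent contraction/continuity argument will rest.
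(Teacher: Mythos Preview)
Your proposal is correct and matches the paper's proof essentially verbatim: the paper simply states that the result follows directly from Lemmas \ref{Rcontlem}, \ref{operatordiffcontlem}, and \ref{nonlincontlem}, which is precisely the decomposition and reasoning you give.
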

\begin{proof}
    This follows directly from Lemmas \ref{Rcontlem}, \ref{operatordiffcontlem} and \ref{nonlincontlem}.
\end{proof}
 
\begin{prop}\label{contractionprop}
There exists an open interval $\mathring{I}_0\subset I\subset (\tfrac{7}{3},5)$ such that $3\in \mathring{I}_0$ and for all $(a,p)\in J_*\times I_0$, where $I_0$ denotes the closure of $\mathring{I}_0$, there exists a unique $f_{a,p}\in X$ with $\|f_{a,p}\|_X\leq 1.2 \cdot 10^{-6}$ such that
$$f_{a,p}(y)=\mathcal{J}(a,p,\mathcal{G}(a,p,f_{a,p}))(y)$$
for all $y\in(-1,1)$. Furthermore, the map $(a,p)\mapsto f_{a,p}:J_*\times I_0\rightarrow X$ is continuous.
\end{prop}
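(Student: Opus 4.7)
The strategy is to apply the Banach fixed-point theorem to the map $T_{a,p}\colon X\to X$ defined by $T_{a,p}(f):=\mathcal{J}(a,p,\mathcal{G}(a,p,f))$ on the closed ball $B_r:=\{f\in X:\|f\|_X\leq r\}$ with $r=1.2\cdot 10^{-6}$. By Proposition \ref{Jbdlem},
$$\|T_{a,p}(f)\|_X\lesssim \|\mathcal{G}(a,p,f)\|_Y\leq \|\tilde{\mathcal{L}}_{a,p}(f)-\mathcal{L}_{a,p}(f)\|_Y+\|\mathcal{N}_{a,p}(f)\|_Y+\|\mathcal{R}(a_*+a,p,f_*(\cdot,a,p))\|_Y,$$
so the self-mapping property reduces to three ingredients: smallness of the source $\mathcal{R}$ in $Y$, smallness of the linear error $\tilde{\mathcal{L}}_{a,p}-\mathcal{L}_{a,p}$ as an operator $X\to Y$ (this is precisely why $\tilde{\mathcal{L}}_{a,p}$ was constructed to be close to $\mathcal{L}_{a,p}$), and a quadratic-type bound $\|\mathcal{N}_{a,p}(f)\|_Y\lesssim \|f\|_X^2$ coming from the smoothness of $z\mapsto z|z|^{p-1}$ at the nonzero base $f_*(\cdot,a,p)$.

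At $p=3$, each of these estimates is exactly what is established with computer assistance in \cite{DS}, with explicit constants chosen so that $T_{a,3}$ maps $B_r$ into itself for every $a\in J_*$. To extend this to $p$ near $3$, I would combine the compactness of $J_*$ with the continuity statements already recorded in Lemmas \ref{Rcontlem}, \ref{operatordiffcontlem}, \ref{nonlincontlem}, and \ref{JFcontlem}: these imply that the relevant norms and operator bounds are continuous functions of $(a,p)\in J_*\times I$. Hence, by possibly shrinking $I$ to a compact subinterval $I_0\ni 3$, the $p=3$ bounds persist up to arbitrarily small degradation, which preserves the self-mapping property at the same radius $r=1.2\cdot 10^{-6}$.

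For the contraction, I would bound
$$\|T_{a,p}(f_1)-T_{a,p}(f_2)\|_X\lesssim \|\tilde{\mathcal{L}}_{a,p}(f_1-f_2)-\mathcal{L}_{a,p}(f_1-f_2)\|_Y+\|\mathcal{N}_{a,p}(f_1)-\mathcal{N}_{a,p}(f_2)\|_Y.$$
The first term is controlled by the same small $X\to Y$ operator-norm estimate for $\tilde{\mathcal{L}}_{a,p}-\mathcal{L}_{a,p}$ already used above. The second term is controlled by a mean-value argument on $z\mapsto z|z|^{p-1}$ around $f_*$, producing a Lipschitz constant of order $r$, which is negligible. Together these give a contraction constant strictly less than $1$ on $B_r$ uniformly in $(a,p)\in J_*\times I_0$, so Banach's principle yields a unique fixed point $f_{a,p}\in B_r$. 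Continuity of $(a,p)\mapsto f_{a,p}$ in $X$ then follows from the standard parametric version of the contraction mapping theorem, using that $(a,p)\mapsto T_{a,p}(f)$ is continuous for each fixed $f\in X$ (by Lemmas \ref{GFcontlem} and \ref{JFcontlem}) and that the contraction constant is uniform.

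The main obstacle is bookkeeping rather than conceptual: one must verify that the continuous dependence of all quantities on $p$ is strong enough that the sharp constants from \cite{DS} at $p=3$ can absorb the perturbation uniformly on $J_*\times I_0$, without invoking any further computer assistance. Since $a_*+a$ stays bounded away from $0$ and the Frobenius/WKB exponent $\tfrac{4}{p-1}$ varies continuously with $p$ inside $(1,3)$ for $p\in(\tfrac{7}{3},5)$, no new singularities or endpoint degenerations appear, so the required continuity/perturbation argument goes through and determines an admissible $I_0$ by a finite number of ``shrink $I$ if necessary'' steps.
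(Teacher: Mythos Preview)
Your proposal is correct and follows essentially the same approach as the paper: apply the Banach fixed-point theorem to $f\mapsto\mathcal{J}(a,p,\mathcal{G}(a,p,f))$ on the ball of radius $1.2\cdot10^{-6}$, import the self-mapping and contraction estimates at $p=3$ from \cite{DS}, extend them to a neighborhood of $3$ via the continuity lemmas (Lemmas~\ref{JFcontlem} and~\ref{GFcontlem}), and deduce continuity of $(a,p)\mapsto f_{a,p}$ from the uniform contraction constant. The only cosmetic difference is that the paper invokes the continuity of the composed map $(a,p)\mapsto\mathcal{J}(a,p,\mathcal{G}(a,p,f))$ directly, whereas you decompose $\mathcal{G}$ into its three constituents and argue for each piece separately; both routes lead to the same conclusion.
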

\begin{proof}
    From the proof\footnote{The bounds in the proof of Proposition 10.2 in \cite{DS} are stated with ``$\leq$'' but obviously, they are not sharp.} of Proposition 10.2 in \cite{DS} we have
    $$\|\mathcal{J}(a,3,\mathcal{G}(a,3,f))\|_X<1.2 \cdot 10^{-6}$$
    and
    $$ \|\mathcal{J}(a,3,\mathcal{G}(a,3,f))-\mathcal{J}(a,3,\mathcal{G}(a,3,g))\|_X<\tfrac{1}{2}\|f-g\|_X$$
    for all $a\in J_*$ and $f,g\in X$ satisfying $\|f\|_X,\|g\|_X\leq 1.2 \cdot 10^{-6}$.
    By Lemmas \ref{JFcontlem} and \ref{GFcontlem}, the map $(a,p)\mapsto\mathcal{J}(a,p,\mathcal{G}(a,p,f))$ is continuous and we conclude 
    $$\|\mathcal{J}(a,p,\mathcal{G}(a,p,f))\|_X<1.2 \cdot 10^{-6}$$
    and
    $$ \|\mathcal{J}(a,p,\mathcal{G}(a,p,f))-\mathcal{J}(a,p,\mathcal{G}(a,p,g))\|_X<\tfrac{1}{2}\|f-g\|_X$$
    for all $(a,p)\in J_*\times I_0$ with $I_0$ sufficiently small around 3, and $f,g\in X$ satisfying $\|f\|_X,\|g\|_X\leq 1.2 \cdot 10^{-6}$.
    Consequently, for all $(a,p)\in J_*\times I_0$, $f\mapsto \mathcal{J}(a,p,\mathcal{G}(a,p,f))$ is a contraction on the ball $\lbrace f\in X: \|f\|_X\leq 1.2 \cdot 10^{-6}\rbrace$ and there exists a unique fixed point $f_{a,p}$.
    
    For the continuity, note that
    \begin{align*}
        \|f_{a,p}-f_{b,q}\|_X=&\ \|\mathcal{J}(a,p,\mathcal{G}(a,p,f_{a,p})-\mathcal{J}(b,q,\mathcal{G}(b,q,f_{b,q})\|_X\\
        \leq&\ \|\mathcal{J}(a,p,\mathcal{G}(a,p,f_{a,p})-\mathcal{J}(b,q,\mathcal{G}(b,q,f_{a,p})\|_X+ \tfrac{1}{2}\|f_{a,p}-f_{b,q}\|_X
    \end{align*}
    and the claim follows from the continuity of $(b,q)\mapsto\mathcal{J}(b,q,\mathcal{G}(b,q,f_{a,p})):J_*\times I_0\rightarrow X$.
\end{proof}

\begin{prop}\label{psizeroprop}
    Let $(a,p)\mapsto f_{a,p}:J_*\times I_0\rightarrow X$ be the continuous map constructed in Proposition \ref{contractionprop}. Then there exists an open interval $\mathring{I}_*\subset I_0$ such that $3\in \mathring{I}_*$ and for all $p\in I_*$, where $I_*$ denotes the closure of $\mathring{I}_*$, there exists an $a_p\in J_*$ such that
    $$\psi(a_p,p,\mathcal{G}(a_p,p,f_{a_p,p}))=0.$$
\end{prop}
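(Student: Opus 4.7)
The plan is to apply the intermediate value theorem in $a$ for each fixed $p$, using a sign-change property at $p=3$ established in \cite{DS} and propagating it to nearby $p$ by continuity. Define
$$\Psi:J_*\times I_0\to\mathbb{R},\qquad \Psi(a,p):=\psi(a,p,\mathcal{G}(a,p,f_{a,p})).$$

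The first step is to verify that $\Psi$ is jointly continuous on $J_*\times I_0$. This is obtained by combining three inputs: Proposition \ref{contractionprop} yields continuity of $(a,p)\mapsto f_{a,p}\in X$; Lemma \ref{GFcontlem} yields continuity of $(a,p)\mapsto\mathcal{G}(a,p,f)\in Y$ for each fixed $f\in X$; and Lemma \ref{psicontlem} yields continuity of $(a,p)\mapsto\psi(a,p,g)\in\mathbb{R}$ for each fixed $g\in Y$. To stitch these together at a point $(a_0,p_0)$, I would apply the telescoping decomposition
\begin{align*}
|\Psi(a,p)-\Psi(a_0,p_0)|\leq &\ |\psi(a,p,\mathcal{G}(a,p,f_{a,p}))-\psi(a,p,\mathcal{G}(a,p,f_{a_0,p_0}))|\\
&+|\psi(a,p,\mathcal{G}(a,p,f_{a_0,p_0}))-\psi(a,p,\mathcal{G}(a_0,p_0,f_{a_0,p_0}))|\\
&+|\psi(a,p,\mathcal{G}(a_0,p_0,f_{a_0,p_0}))-\psi(a_0,p_0,\mathcal{G}(a_0,p_0,f_{a_0,p_0}))|.
\end{align*}
The third summand tends to zero by Lemma \ref{psicontlem}. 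The second summand tends to zero by Lemma \ref{GFcontlem} applied to the fixed function $f_{a_0,p_0}$ together with the bound $|\psi(a,p,g)|\lesssim\|g\|_Y$ from Lemma \ref{psicontlem}, which makes $\psi(a,p,\cdot)$ Lipschitz in its third argument uniformly in $(a,p)$ by $\mathbb{R}$-linearity. The first summand tends to zero by combining the same uniform Lipschitz property of $\psi(a,p,\cdot)$ with a local $X\to Y$ Lipschitz bound on $\mathcal{G}(a,p,\cdot)$ (readily obtained from $\mathbb{R}$-linearity of $\tilde{\mathcal{L}}_{a,p}-\mathcal{L}_{a,p}$ together with the nonlinear estimates for $\mathcal{N}_{a,p}$ underlying the contraction estimate in Proposition \ref{contractionprop}), and the $X$-continuity $\|f_{a,p}-f_{a_0,p_0}\|_X\to 0$ from Proposition \ref{contractionprop}.

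The second step is to invoke the corresponding construction from \cite{DS} at $p=3$: the proof there produces a zero of $a\mapsto\Psi(a,3)$ on $J_*$ via the intermediate value theorem, which relies on a strict sign change $\Psi(-10^{-10},3)\cdot\Psi(10^{-10},3)<0$. Equivalently, the interval $J_*$ was chosen precisely so that the quantitative estimates of \cite{DS} force this sign change with a definite margin.

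The third step is the perturbation. By joint continuity of $\Psi$ at the two points $(\pm 10^{-10},3)$, there exists an open interval $\mathring{I}_*\subset I_0$ containing $3$ whose closure $I_*$ satisfies
$$\Psi(-10^{-10},p)\cdot\Psi(10^{-10},p)<0\quad\text{for all }p\in I_*.$$
For each such $p$, the continuous function $a\mapsto\Psi(a,p)$ on $J_*$ takes values of opposite sign at the endpoints, so by the intermediate value theorem there exists $a_p\in J_*$ with $\Psi(a_p,p)=0$, which is the desired conclusion. The main obstacle in this plan is extracting from \cite{DS} a sign-change statement at $p=3$ that is \emph{strict} with a quantitative gap; once this is in hand, the perturbation in $p$ is automatic since all bounds are proved with positive margins in \cite{DS}. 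The remaining work, namely establishing joint continuity of $\Psi$, is routine but requires careful bookkeeping of the three separate continuity statements.
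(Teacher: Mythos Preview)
Your proposal is correct and follows essentially the same route as the paper: invoke the strict sign change $\Psi(\pm 10^{-10},3)\lessgtr 0$ from \cite{DS}, propagate it to nearby $p$ by continuity, and apply the intermediate value theorem in $a$. Your treatment of the joint continuity of $\Psi$ is in fact more explicit than the paper's, which simply cites Lemma~\ref{psicontlem} and leaves the composition with Lemma~\ref{GFcontlem} and Proposition~\ref{contractionprop} to the reader.
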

\begin{proof}
Let $a_\pm:=\pm10^{-10}$. From the proof of Proposition 10.3 in \cite{DS} we have
$$\psi(a_-,3,\mathcal{G}(a_-,3,f_{a_-,3}))<0,\hspace{1cm}\psi(a_+,3,\mathcal{G}(a_+,3,f_{a_+,3}))>0.$$
By continuity of the map $p\mapsto \psi(a,p,\mathcal{G}(a,p,f_{a,p})):I_*\rightarrow\mathbb{R}$ (Lemma \ref{psicontlem}) we conclude that by choosing $I_*\subseteq I_0$ sufficiently small around 3,
$$\psi(a_-,p,\mathcal{G}(a_-,p,f_{a_-,p}))<0,\hspace{1cm}\psi(a_+,p,\mathcal{G}(a_+,p,f_{a_+,p}))>0$$
for all $p\in I_*$.
Applying the intermediate value theorem to the continuous map $a\mapsto \psi(a,p,\mathcal{G}(a,p,f_{a,p}))$ yields an $a\in J_*$ with the claimed property.
\end{proof}

\begin{cor}\label{apfpCor}
    Let $I_*$ be the interval described in Proposition \ref{psizeroprop}. Then for all $p\in I_*$, there exist $(a_p,f_p)\in J_*\times X\cap C^2(-1,1)$ with $\|f\|_X\leq 1.2 \cdot 10^{-6}$ such that
    $$\mathcal{R}(a_*+a_p,p,f_*(\cdot,a_p,p)+f_p)=0$$
    on $(-1,1)$.
\end{cor}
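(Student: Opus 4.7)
The corollary is essentially an assembly of Propositions \ref{contractionprop} and \ref{psizeroprop} together with the remark linking the profile equation to the operator equation. The plan is the following. For $p \in I_*$, let $a_p \in J_*$ be the parameter produced by Proposition \ref{psizeroprop}, and set $f_p := f_{a_p,p}$, where $f_{a_p,p} \in X$ is the unique fixed point from Proposition \ref{contractionprop}. This immediately secures membership in $J_* \times X$ and the bound $\|f_p\|_X \leq 1.2\cdot 10^{-6}$.

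The key observation is that the regularization term in $\mathcal{J}$ involves exactly the functional $\psi$: by definition,
$$\mathcal{J}(a_p,p,g)(y) = \tilde{\mathcal{L}}^{-1}_{a_p,p}(g)(y) - \chi(y)\bigl[F_L(y)_4^1 + i F_L(y)_4^2\bigr]\psi(a_p,p,g).$$
Specializing at $g = \mathcal{G}(a_p,p,f_p)$, Proposition \ref{psizeroprop} ensures that $\psi(a_p,p,\mathcal{G}(a_p,p,f_p)) = 0$, so the correction term drops out and the fixed-point identity from Proposition \ref{contractionprop} reduces to
$$f_p = \tilde{\mathcal{L}}^{-1}_{a_p,p}\bigl(\mathcal{G}(a_p,p,f_p)\bigr).$$
Applying $\tilde{\mathcal{L}}_{a_p,p}$ and invoking Lemma \ref{Linvlem} yields $\tilde{\mathcal{L}}_{a_p,p}(f_p) = \mathcal{G}(a_p,p,f_p)$, which by the remark following the definition of $\mathcal{G}$ is equivalent to $\mathcal{R}(a_* + a_p, p, f_*(\cdot,a_p,p) + f_p) = 0$ on $(-1,1)$.

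It remains to upgrade the regularity to $C^2(-1,1)$. Lemma \ref{Linvlem} gives $f_p \in C^2((-1,1)\setminus\{0\}) \cap C^1(-1,1)$, so the only potential issue is the connection point $y = 0$. However, since $f_*(\cdot,a_p,p) \in C^2([-1,1])$ and $f_p \in C^1(-1,1)$, the identity $\mathcal{R}(a_* + a_p, p, f_*(\cdot,a_p,p) + f_p) = 0$ can be solved for $(f_*+f_p)''(y)$ in terms of quantities that are continuous on all of $(-1,1)$ (the coefficients $p_0, q_0$ are smooth away from $y=\pm 1$ and the nonlinearity is continuous in $f_*+f_p$). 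Hence $f_p''$ extends continuously through $0$ and $f_p \in C^2(-1,1)$.

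There is no real obstacle in this final step; it is purely a matter of chaining the previous results together and noting the elementary ODE-regularity upgrade at $y=0$. If anything deserves care, it is the verification that the vanishing of $\psi$ is precisely what is needed to pass from the fixed-point equation for $\mathcal{J}$ to the formal-inverse identity for $\tilde{\mathcal{L}}^{-1}_{a_p,p}$, but this is transparent from the definition of $\mathcal{J}$.
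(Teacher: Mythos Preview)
Your proposal is correct and follows essentially the same route as the paper: invoke Propositions \ref{contractionprop} and \ref{psizeroprop} to obtain $(a_p,f_p)$, use the vanishing of $\psi$ to reduce $\mathcal{J}$ to $\tilde{\mathcal{L}}^{-1}_{a_p,p}$, apply Lemma \ref{Linvlem} to deduce $\mathcal{R}(\ldots)=0$ on $(-1,1)\setminus\{0\}$, and then upgrade to $C^2(-1,1)$ by solving the ODE for $f_p''$ in terms of continuous data. The only cosmetic imprecision is that the identity $\tilde{\mathcal{L}}_{a_p,p}(f_p)=\mathcal{G}(a_p,p,f_p)$ initially holds only on $(-1,1)\setminus\{0\}$ (since the coefficients $p_1,p_2,q_1,q_2$ are defined there), but you correctly handle the point $y=0$ in your regularity step anyway.
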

\begin{proof}
    Let $p\in I_*$. Then by Propositions \ref{contractionprop} and \ref{psizeroprop} there exist $(a_p,f_p)\in J_*\times X$ such that $f_p(y)=\mathcal{J}(a_p,p,\mathcal{G}(a_p,p,f_p))(y)$ for all $y\in(-1,1)$ and $\psi(a_p,p,\mathcal{G}(a_p,p,f_p))=0$. Since $\psi(a_p,p,\mathcal{G}(a_p,p,f_p))=0$, we have $\mathcal{J}(a_p,p,\mathcal{G}(a_p,p,f_p))=\tilde{\mathcal{L}}_{a_p,p}^{-1}(\mathcal{G}(a_p,p,f_p))$. By Lemma \ref{GFcontlem}, $\mathcal{G}(a_p,p,f_p)\in Y$, and hence, by Lemma \ref{Linvlem}, $f_p\in C^2((-1,1)\setminus\lbrace 0\rbrace)\cap C^1(-1,1)$ and
    $$\tilde{\mathcal{L}}_{a_p,p}(f)(y)=\tilde{\mathcal{L}}_{a_p,p}(\tilde{\mathcal{L}}_{a_p,p}^{-1}(\mathcal{G}(a_p,p,f_p)))(y)=\mathcal{G}(a_p,p,f_p)(y)$$ for all $y\in(-1,1)\setminus\lbrace 0 \rbrace$, or equivalently
    $$\mathcal{R}(a_*+a_p,p,f_*(\cdot,a_p,p)+f_p)=0$$
    on $(-1,1)\setminus\lbrace 0\rbrace$. Now, since $f_p\in C^1(-1,1)$ and $\mathcal{R}(a_*+a_p,p,f_*(\cdot,a_p,p)+f_p)=0$ is a second order differential equation with coefficients and nonlinearity in $C(-1,1)$, it follows that $f_p\in C^2(-1,1)$ and $\mathcal{R}(a_*+a,p,f_*(\cdot,a,p)+f_p)=0$ on all of $(-1,1)$.
\end{proof}

\noindent It remains to prove the stated regularity properties.
\begin{lem}\label{finiteElem}
    Let $f\in X, \alpha\in\mathbb{R}\setminus\lbrace 0\rbrace$, $p\in(\tfrac{7}{3},5)$, and $h:\mathbb{R}^3\rightarrow\mathbb{C}$ be defined by
    $$h(x):=(1+|x|)^{-\tfrac{2}{p-1}-\tfrac{i}{\alpha}}f\left(\frac{|x|-1}{|x|+1}\right).$$
    Then $h\in L^{p+1}(\mathbb{R}^3)\cap\dot{H}^1(\mathbb{R}^3)$.
\end{lem}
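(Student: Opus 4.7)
The plan is to pass to spherical coordinates in $\mathbb{R}^3$ and reduce both claims to finiteness of one-dimensional integrals in $r = |x|$, using the two defining estimates of the $X$-norm together with the standing hypothesis $p < 5$.

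For $L^{p+1}$, I would start from the pointwise bound $|h(x)| \leq \|f\|_X (1+|x|)^{-2/(p-1)}$, which follows from $\|f\|_{L^\infty(-1,1)} \leq \|f\|_X$. Then $\int_{\mathbb{R}^3} |h|^{p+1}\,dx$ is controlled by $\int_0^\infty r^2 (1+r)^{-2(p+1)/(p-1)}\,dr$, which is finite at infinity precisely when $2(p+1)/(p-1) > 3$, equivalent to $p < 5$, and has no issue at the origin. This part is completely routine.

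For $\dot H^1$, since $h$ is radial I write $h(x) = H(|x|)$ with $H(r) := (1+r)^{-2/(p-1) - i/\alpha} f(y(r))$ and $y(r) := (r-1)/(r+1)$, so that $|\nabla h(x)| = |H'(|x|)|$ on $\mathbb{R}^3 \setminus \{0\}$. A direct product-and-chain-rule computation of $H'$ produces two contributions: one involving $f(y(r))$, immediately controlled by $\|f\|_{L^\infty} \leq \|f\|_X$, and one involving $f'(y(r))$. For the latter I would invoke the defining bound $(1-y^2)|f'(y)| \leq \|f\|_X$ together with the identity $1 - y(r)^2 = 4r/(1+r)^2$ to get $|f'(y(r))| \leq \|f\|_X (1+r)^2/(4r)$. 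Combining and simplifying yields
\[
|H'(r)| \lesssim \|f\|_X \left[ (1+r)^{-\tfrac{2}{p-1}-1} + r^{-1}(1+r)^{-\tfrac{2}{p-1}} \right],
\]
and $\int_0^\infty |H'(r)|^2 r^2\,dr$ reduces to an elementary check: the $r^{-2}$ coming from the second piece at $r \to 0$ is exactly compensated by the spherical Jacobian $r^2$, and at $r \to \infty$ both pieces decay like $r^{-2/(p-1)-1}$, so $r^2|H'|^2$ decays like $r^{-4/(p-1)}$, integrable iff $p < 5$.

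The only mildly subtle point — and the one I would treat with the most care — is justifying that the pointwise derivative on $\mathbb{R}^3 \setminus \{0\}$ gives the distributional gradient on all of $\mathbb{R}^3$. Since $f \in C([-1,1])$ by the definition of $X$ and $y(r) \to -1$ as $r \to 0$, the function $h$ is continuous on $\mathbb{R}^3$, and the bounds above place its classical gradient in $L^2_{\mathrm{loc}}$. A standard cut-off argument around the origin then removes boundary contributions in the integration-by-parts with test functions, yielding $h \in \dot H^1(\mathbb{R}^3)$. Apart from this minor technicality, the lemma is a direct calculation.
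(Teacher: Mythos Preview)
Your proof is correct and follows essentially the same route as the paper: reduce to the radial profile, bound $|h|$ by $\|f\|_{L^\infty}$ and the two pieces of $|H'|$ using the $X$-norm control on $(1-y^2)f'$, then check the resulting one-dimensional integrals converge precisely when $p<5$. The only cosmetic difference is that the paper performs the change of variables $r=\tfrac{1+y}{1-y}$ and evaluates the integrals over $(-1,1)$, whereas you stay in the $r$-variable; your additional care about the distributional-versus-classical gradient at the origin is a technicality the paper leaves implicit.
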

\begin{proof}
    We define $\tilde{h}:[0,\infty)\rightarrow\mathbb{C}$ by
    $$\tilde{h}(r):=(1+r)^{-\tfrac{2}{p-1}-\tfrac{i}{\alpha}}f\left(\frac{r-1}{r+1}\right).$$
    Then $h(x)=\tilde{h}(|x|)$ and
    $$\|h\|^{p+1}_{L^{p+1}(\mathbb{R}^3)}\simeq\int_0^\infty|\tilde{h}(r)|^{p+1}r^2dr,\hspace{1cm}\|h\|^{2}_{\dot{H}^1(\mathbb{R}^3)}\simeq\int_0^\infty|\tilde{h}'(r)|^2r^2dr.$$
    The change of variable $r=\frac{1+y}{1-y}$ yields $dr=\tfrac{2}{(1-y)^2}dy$ and thus,
    $$\|h\|^{p+1}_{L^{p+1}(\mathbb{R}^3)}\simeq\int_{-1}^1(1-y)^{\tfrac{2(p+1)}{p-1}}|f(y)|^{p+1}\left(\frac{1+y}{1-y}\right)^2(1-y)^{-2}dy\lesssim\|f\|_X^{p+1}$$
    and
    \begin{align*}
        \|h\|^{2}_{\dot{H}^1(\mathbb{R}^3)}\simeq&\int_{-1}^1(1-y)^{\tfrac{2(p+1)}{p-1}}|f(y)|^2\left(\frac{1+y}{1-y}\right)^2(1-y)^{-2}dy\\&+\int_{-1}^1(1-y)^{\tfrac{4}{p-1}+4}|f'(y)|^2\left(\frac{1+y}{1-y}\right)^2(1-y)^{-2}dy\\
        \lesssim&\|f\|_X^2.
    \end{align*}
\end{proof}

\begin{lem}\label{Qpreg}
For $p\in I_*$, let $(a_p,f_{p})\in J_*\times X\cap C^2(-1,1)$ be as in Corollary \ref{apfpCor} and define $Q_p:\mathbb{R}^3\rightarrow\mathbb{C}$ by
    $$Q_p(x):=(1+|x|)^{-\tfrac{2}{p-1}-\tfrac{1}{a_*+a_p}}\left[f_*\left(\frac{|x|-1}{|x|+1},a_p,p\right)+f_{p}\left(\frac{|x|-1}{|x|+1}\right)\right].$$
    Then $Q_p\in L^{p+1}(\mathbb{R}^3)\cap\dot{H}^1(\mathbb{R}^3)\cap C^\infty(\mathbb{R}^3)$ and $Q_p$ satisfies Eq.~(\ref{QEq}) on $\mathbb{R}^3$.
\end{lem}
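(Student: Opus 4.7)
The plan is to verify the three claims—the integrability, the profile equation on $\mathbb{R}^3$, and the $C^\infty$ regularity—as three separate steps, relying first on Lemma \ref{finiteElem} and then on reversing the compactification substitution.

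For the integrability, I would simply invoke Lemma \ref{finiteElem}. Set $f(y) := f_*(y,a_p,p)+f_p(y)$; since $f_* \in C^2([-1,1]\times J_*\times I)$ in particular gives $f_*(\cdot,a_p,p) \in X$, and $f_p \in X$ by hypothesis, the sum lies in $X$. Applying Lemma \ref{finiteElem} with $\alpha = a_*+a_p \in \mathbb{R}\setminus\{0\}$ then yields $Q_p \in L^{p+1}(\mathbb{R}^3) \cap \dot H^1(\mathbb{R}^3)$.

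Next I would establish (\ref{QEq}) on $\mathbb{R}^3 \setminus \{0\}$. Define the radial profile $q(r) := (1+r)^{-2/(p-1)-i/(a_*+a_p)} f((r-1)/(r+1))$, so that $Q_p(\xi) = q(|\xi|)$. The compactification in Section~2 was built so that $\mathcal{R}(a_*+a_p,p,f) = 0$ on $(-1,1)$ is equivalent to $q$ solving Eq.~(\ref{q-Eq}) on $(0,\infty)$. Corollary \ref{apfpCor} provides the former, hence the latter. For any $\xi \neq 0$, the radial formulas $\Delta Q_p(\xi) = q''(|\xi|) + (2/|\xi|)q'(|\xi|)$ and $\xi \cdot \nabla Q_p(\xi) = |\xi|q'(|\xi|)$ translate Eq.~(\ref{q-Eq}) verbatim into Eq.~(\ref{QEq}).

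For the behavior at $\xi=0$ (equivalently $r = 0$, i.e., $y = -1$), the key point is that the construction of $\mathcal{J}$ combined with $\psi(a_p,p,\mathcal{G}(a_p,p,f_p)) = 0$ (Proposition \ref{psizeroprop}) was specifically designed to eliminate the contributions from the singular left fundamental solutions $f_{L,3}$ and $f_{L,4}$. Together with $f_* \in C^\infty([-1,1])$ (recall $P_*$ is a polynomial), this places $f$ in the regular branch near $y=-1$, so $q$ extends continuously to $r=0$. Frobenius analysis for Eq.~(\ref{q-Eq}) at its regular singular point $r=0$ (indicial roots $0,-1$) then gives a power-series representation $q(r) = \sum_{k\ge 0} c_{2k} r^{2k}$ of the branch we are in; this even expansion extends $q$ to a $C^\infty$ even function on $\mathbb{R}$, which is the standard criterion for $Q_p(\xi) = q(|\xi|)$ to be $C^\infty$ at the origin. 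In particular, Eq.~(\ref{QEq}) holds at $\xi = 0$ by continuity.

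The main obstacle is the global $C^\infty$ regularity for $p \ne 3$, because the nonlinearity $z \mapsto z|z|^{p-1}$ has only finite regularity at $z = 0$, so naive elliptic bootstrapping stalls at any zero of $Q_p$. The way around this is to use that the ansatz writes $Q_p(x) = (1+|x|)^{-2/(p-1)-i/(a_*+a_p)}[g_*(|x|,a_p,p) + g_p(|x|)]$, and the quantitative bound $\sup |g_p| + 2\sup r|g_p'(r)| \le 1.2 \cdot 10^{-6}$ from Proposition \ref{contractionprop} is much smaller than the explicit lower bound on $|g_*|$ one reads off from the Chebychev coefficients of $P_*$ in \cite{DS}; hence the bracket is nonvanishing and $Q_p$ has no zeros on $\mathbb{R}^3$. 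On $\mathbb{R}^3 \setminus \{0\}$ where $Q_p \neq 0$, the nonlinearity is $C^\infty$ in $Q_p$, so standard elliptic bootstrapping (or direct ODE bootstrapping of Eq.~(\ref{q-Eq}) with smooth nonlinearity away from zero) upgrades the $C^1$ regularity already established in Corollary~\ref{apfpCor} to $C^\infty$; combined with the Frobenius argument at the origin, this completes the proof.
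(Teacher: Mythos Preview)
Your argument is correct. For integrability and for the equation on $\mathbb{R}^3\setminus\{0\}$ you do exactly what the paper does. For the $C^\infty$ statement you take a genuinely different route: the paper writes $\Delta Q_p=F_p$ with $F_p=Q_p-i\alpha_p[x\cdot\nabla Q_p+\tfrac{2}{p-1}Q_p]-Q_p|Q_p|^{p-1}$ and runs a Sobolev/elliptic bootstrap ($F_p\in L^2_{\mathrm{loc}}\Rightarrow Q_p\in W^{2,2}_{\mathrm{loc}}\hookrightarrow W^{1,6}_{\mathrm{loc}}\Rightarrow F_p\in L^6_{\mathrm{loc}}\Rightarrow Q_p\in W^{2,6}_{\mathrm{loc}}\hookrightarrow C^1_{\mathrm{loc}}\Rightarrow\cdots$), which handles the origin uniformly and bypasses any Frobenius analysis. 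Your ODE route (Frobenius at $r=0$, ODE bootstrap elsewhere) is more hands-on; its advantage is that you explicitly confront the obstruction the paper is silent about: for $p\neq 3$ the map $z\mapsto z|z|^{p-1}$ has only finite regularity at $z=0$, so either bootstrap would stall at a zero of $Q_p$. Your no-zeros argument from the quantitative smallness of $g_p$ relative to $|g_*|$ is therefore not merely a convenience but is needed to make the paper's bootstrap rigorous as well. Two small points worth tightening: your Frobenius step silently uses $q(0)\neq 0$ so that the nonlinearity is locally real-analytic (your no-zeros argument supplies this), and the evenness of the power series should be deduced from the parity structure of Eq.~(\ref{q-Eq}) rather than asserted.
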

\begin{proof}
    By construction and Lemma \ref{finiteElem}, $Q_p\in L^{p+1}(\mathbb{R}^3)\cap\dot{H}^1(\mathbb{R}^3)\cap C^2(\mathbb{R}^3\setminus\lbrace 0\rbrace)$ and $Q_p$ satisfies Eq.~(\ref{QEq}) on $\mathbb{R}^3\setminus\lbrace 0\rbrace$.
    We set $F_p(x):=Q_p-i\alpha_p\left[x\cdot\nabla Q_p(x)+\tfrac{2}{p-1}Q_p(x)\right]-Q_p(x)|Q_p(x)|^{p-1}$. Since obviously $Q_p$ is bounded, we have $Q_p,|Q_p|^{p-1}Q_p\in L^p_{\mathrm{loc}}(\mathbb{R}^3)$ for all $p\geq 1$. In particular, $F_p\in L^2_{\mathrm{loc}}(\mathbb{R}^3)=W^{0,2}_{\mathrm{loc}}(\mathbb{R}^3)$ and hence, $Q_p\in W^{2,2}_{\mathrm{loc}}(\mathbb{R}^3)$. By the Sobolev embedding $W^{2,2}_{\mathrm{loc}}(\mathbb{R}^3)\hookrightarrow W^{1,6}_{\mathrm{loc}}(\mathbb{R}^3)$, we have $(\cdot)\cdot\nabla Q_p\in L^6_{\mathrm{loc}}(\mathbb{R}^3)$ and hence $F_p\in L^6_{\mathrm{loc}}(\mathbb{R}^3)=W^{0,6}_{\mathrm{loc}}(\mathbb{R}^3)$ and $Q_p\in W^{2,6}_\mathrm{loc}$. By the Sobolev embedding $W^{2,6}_{\mathrm{loc}}(\mathbb{R}^3)\hookrightarrow C^1_{\mathrm{loc}}(\mathbb{R}^3)$, we have $F_p\in C_{\mathrm{loc}}(\mathbb{R}^3)$ and hence, $Q_p\in C^2_{\mathrm{loc}}(\mathbb{R}^3)$. By bootstrapping we obtain $Q_p\in\cap_{k\geq 1}C^k_{\mathrm{loc}}(\mathbb{R}^3)=C^\infty(\mathbb{R}^3)$ and $Q_p$ satisfies Eq.~(\ref{QEq}) on all of $\mathbb{R}^3$.
\end{proof}

We conclude the proof of Theorem \ref{quantThm} by proving the stated estimate on the function $g_p$.
\begin{lem}
    Let $f\in X$ and define $g:[0,\infty)\rightarrow\mathbb{C}$ by
    $$g(r):=f\left(\frac{r-1}{r+1}\right).$$
    Then we have
    $$2\sup_{r>0}r|g'(r)|+\sup_{r>0}|g(r)|\leq\|f\|_X.$$
\end{lem}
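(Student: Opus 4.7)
The plan is to show that the change of variables $y=\frac{r-1}{r+1}$ transforms the norm on the right-hand side into precisely the quantity on the left-hand side, so the inequality will in fact be an equality.

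First, I would note that $y:[0,\infty)\to[-1,1)$ is a smooth bijection with inverse $r=\frac{1+y}{1-y}$, and a direct computation gives $\frac{dy}{dr}=\frac{2}{(r+1)^2}$. Since $f\in C([-1,1])\cap C^1(-1,1)$, the chain rule yields $g\in C([0,\infty))\cap C^1(0,\infty)$ with
$$g'(r)=\frac{2}{(r+1)^2}f'\!\left(\frac{r-1}{r+1}\right)$$
for $r>0$.

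Next, I would observe the algebraic identity
$$1-y^2=(1-y)(1+y)=\frac{2}{r+1}\cdot\frac{2r}{r+1}=\frac{4r}{(r+1)^2},$$
which is exactly $2r$ times the factor $\frac{2}{(r+1)^2}$ appearing in $g'(r)$. Consequently,
$$2r|g'(r)|=\frac{4r}{(r+1)^2}\bigl|f'(y)\bigr|=(1-y^2)|f'(y)|,$$
and taking the supremum over $r>0$ (equivalently $y\in(-1,1)$) gives
$$2\sup_{r>0}r|g'(r)|=\sup_{y\in(-1,1)}(1-y^2)|f'(y)|.$$

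Finally, since $g(r)=f(y)$ with $y\in[-1,1)$, continuity of $f$ on $[-1,1]$ gives $\sup_{r>0}|g(r)|\leq\sup_{y\in(-1,1)}|f(y)|=\|f\|_{L^\infty(-1,1)}$. Adding the two bounds produces exactly $\|f\|_X$, which is the claim. There is no real obstacle here; the only thing to be slightly careful about is that $y=-1$ (i.e.\ $r=0$) is not in the range of the sup on the right-hand side in the definition of $\|f\|_X$, but this is harmless because continuity at $-1$ ensures no supremum is lost.
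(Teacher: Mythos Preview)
Your proof is correct and follows essentially the same route as the paper: compute $g'(r)=\frac{2}{(r+1)^2}f'\bigl(\tfrac{r-1}{r+1}\bigr)$, recognize $\frac{4r}{(r+1)^2}=1-y^2$, and bound the two terms separately. You are slightly more explicit than the paper (and correctly observe that the inequality is in fact an equality), but the argument is the same.
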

\begin{proof}
    By definition, we have
    $$g'(r)=\frac{2}{(r+1)^2}f'\left(\frac{r-1}{r+1}\right)$$
    and thus,
    $$|2rg'(r)|=\left|\frac{4r}{(r+1)^2}f'\left(\frac{r-1}{r+1}\right)\right|\leq\sup_{y\in(-1,1)}(1-y^2)|f'(y)|$$
    for all $r>0$.
\end{proof}

Theorem \ref{qualThm} is a direct implication of Theorem \ref{quantThm}.

\bibliographystyle{plain}
\bibliography{sample}

\end{document}